\newtheorem{thm}{Theorem}[section]
\newtheorem{thmbis}{Theorem}
\newtheorem*{thm*}{Theorem}
\newtheorem{dfn}[thm]{Definition} 
\newtheorem*{dfn*}{Definition}
\newtheorem{cor}[thm]{Corollary}
\newtheorem*{cor*}{Corollary}
\newtheorem{prop}[thm]{Proposition} 
\newtheorem*{prop*}{Proposition} 
\newtheorem*{properties*}{Properties} 
\newtheorem{propbis}[thmbis]{Proposition} 
\newtheorem{lem}[thm]{Lemma} 
\newtheorem*{lem*}{Lemma}
\newtheorem*{claim*}{Claim} 
\newtheorem*{fact*}{Fact} 
\newtheorem{fact}[thm]{Fact} 
\newtheorem*{qst*}{Question}
\newtheorem*{pb*}{Problem}
\newtheorem*{conj*}{Conjecture}
\theoremstyle{remark}
\newtheorem*{algo*}{Algorithm} 
\newtheorem*{rem*}{Remark}
\newtheorem{rem}[thm]{Remark}
\newtheorem*{example*}{Example}
\newcounter{numEnonceTmpInterne}
\newenvironment{enonce*}[1]{\theoremstyle{plain}\stepcounter{numEnonceTmpInterne}%
\def\a{enoncetmp\alph{numEnonceTmpInterne}}%
\newtheorem*{\a}{#1}\begin{\a}}{\end{\a}}
\edef\@tempa#1#2{\def#1{\mathaccent\string"\noexpand\accentclass@#2 }}
\@tempa\rond{017}
\newcommand{\es}{\emptyset}
\renewcommand{\phi}{\varphi} 
\newcommand{\m} {^{-1}} 
\newcommand{\eps} {\varepsilon}
\newcommand {\ra} {\rightarrow}
\newcommand {\onto} {\twoheadrightarrow}
\newcommand {\into} {\hookrightarrow}
\newcommand{\ol}[1]{\overline{#1}} 
\newcommand{\semidirect}{\ltimes}
\newcommand{\isemidirect}{\rtimes}
\newcommand{\normal} {\vartriangleleft}
\renewcommand{\subsetneq}{\varsubsetneq}
\newcommand{\dunion}{\sqcup}
\newcommand{\ie} {i.~e.\ }
\newcommand {\cala} {{\mathcal {A}}}   
\newcommand {\calc} {{\mathcal {C}}}   
\newcommand {\cald} {{\mathcal {D}}}   
\newcommand {\cale} {{\mathcal {E}}}   
\newcommand {\calf} {{\mathcal {F}}}
\newcommand {\calj} {{\mathcal {J}}}
\newcommand {\cals} {{\mathcal {S}}}
\newcommand {\bbN} {{\mathbb {N}}}
\newcommand {\bbQ} {{\mathbb {Q}}}   
\newcommand {\bbR} {{\mathbb {R}}}
\newcommand {\bbZ} {{\mathbb {Z}}}   
\newcommand{\grp}[1]{\langle #1 \rangle}
\newcommand{\Stab} {{\mathrm{Stab}}}
\newcommand{\Disc}{\mathrm{Disc}}
\newcommand{\Reg}{\mathrm{Reg}}
\newcommand{\Sing}{\mathrm{Sing}}
\newcommand{\Irred}{\mathrm{Irred}}
\newcommand{\fin}{\mathrm{fin}}
\newcommand{\gobble}[1]{} 
\newcommand{\IET}{\mathrm{IET}}
\begin{document}

\title{Solvable groups of interval exchange transformations}
\author{Fran\c{c}ois Dahmani, Koji Fujiwara, Vincent Guirardel}

\date{\today.}

\maketitle
\selectlanguage{english}
\begin{abstract}
We prove that  any finitely generated torsion free solvable subgroup 
of the group $\IET$ of
 all Interval Exchange Transformations is virtually abelian. 
 In contrast, 
the lamplighter groups $A\wr \bbZ^k$ embed in $\IET$ for every finite
abelian group $A$, 
and we construct uncountably many non pairwise isomorphic  3-step solvable subgroups
 of $\IET$ as semi-direct products of a lamplighter group with an abelian group. 

We also prove that for every non-abelian finite group 
$F$,  the group $F\wr\bbZ$ 
does not embed in $\IET$. 
\end{abstract}

\selectlanguage{francais}
\begin{abstract}
  Nous démontrons que tout sous-groupe de type fini résoluble du groupe $\IET$ des échanges d'intervalles
est virtuellement abélien. A l'opposé, les groupes d'allumeurs de réverbères $A\wr \bbZ^k$
se plongent dans $\IET$ pour tout groupe abélien fini $A$,
et nous construisons un nombre non dénombrable de sous-groupes résolubles de classe 3 dans $\IET$ non isomorphes entre eux
comme produits semi-directs d'un groupe d'allumeur de réverbère avec des groupes abéliens.

Nous démontrons aussi que pour tout groupe fini non-abélien $F$, le produit en couronne $F\wr \bbZ$ ne se plonge pas dans $\IET$.
\end{abstract}

\selectlanguage{english}

\subsection*{The group $\IET$ and its subgroups}
The group $\IET$ of interval exchange transformations is the group of
all bijections of the interval $[0,1)$ that are piecewise translations
with finitely many discontinuity points.  

Rather unexpectedly, the
recent study of this group has given evidences that it is not as big
as one could have thought, in several commonly accepted features. For
instance, in \cite{DFG1} we established that $\IET$ 
does not have  many free subgroups (if any at all), and that the connected Lie groups
that can embed in it are only abelian. 
Another fact in this direction is that given any finitely generated
subgroup of $\IET$, and any point $x\in [0,1)$ the orbit of $x$ grows
(in cardinality) at most polynomially in the word length of the
elements of the subgroup \cite[Lemma 6.2]{DFG1}.

Yet another instance of these evidences is given by   the main result
of \cite{JM} of  Juschenko and Monod which  implies that
certain natural subgroups of this group are amenable. 
More precisely, given $\alpha\in \bbR\setminus \bbQ$, 
the subgroup  $\IET_{\alpha}$ of transformations whose translation lengths are all multiples of $\alpha$ modulo $1$
is amenable. Indeed, given any finitely generated subgroup $G$ of $\IET_{\alpha}$, $G$ can be viewed as a group of
homeomorphisms of the Cantor set $K$ obtained by blowing up the $R_\alpha$-orbit of the discontinuity points of the generators of $G$,
where $R_\alpha$ is the rotation $x\mapsto x+\alpha \mod 1$ \cite{Cornulier_Bourbaki}.
Denoting by $\hat R_\alpha$ the homeomorphism of $K$ induced by $R_\alpha$, 
this embeds $G$ in the full topological group of $\hat R_\alpha$,
 which is amenable by \cite{JM}.  
This has been extended in \cite{JMMS}   
to subgroups of rational rank $\leq 2$,
ie such that the  subgroup of $\bbQ/\bbZ$ generated by the translation
lengths of its elements does not contain $\bbZ^3$.

Given these evidences,  we chose to investigate  the possible solvable
subgroups of $\IET$.

\subsection*{Results}

In order to describe elementary examples of subgroups of $\IET$, let
us enlarge a bit the 
context, and instead of interval exchange transformations on the
interval $[0,1)$ we consider the group $\IET(\cald)$ of interval  
exchange transformations on a domain $\cald$
consisting of a disjoint union of
finitely many oriented circles, and oriented half-open intervals,
closed on the left (see Section \ref{S1-def}). 
 This does not make change the isomorphism classes of subgroups encountered as $\IET\simeq \IET(\cald)$.

For all $n\in \bbN$, $\bbZ^n$ embeds in $\IET(\bbR/\bbZ)$ as a group of rotations.
The following general simple fact then implies that every finitely generated virtually abelian group embeds in $\IET$.

\begin{propbis}[Proposition \ref{prop_ext}]
  Let $G$ be a group, and assume that some finite index subgroup of $G$ embeds in $\IET$.
Then so does $G$.
\end{propbis}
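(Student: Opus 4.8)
The plan is to use the interval-exchange analogue of inducing a representation from a finite-index subgroup. Let $H\leq G$ be a finite-index subgroup, say $[G:H]=n$, together with an embedding $\phi\colon H\hookrightarrow \IET(\cald)$ for some domain $\cald$ as above (recall $\IET\simeq\IET(\cald)$). Since a finite disjoint union of copies of $\cald$ is again a disjoint union of finitely many oriented circles and half-open intervals, we have $\IET(\cald')\simeq\IET$ where $\cald'=\Dunion_{i=1}^n\cald_i$ is a disjoint union of $n$ labelled copies of $\cald$. It therefore suffices to produce an embedding $G\hookrightarrow\IET(\cald')$.

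First I would fix left coset representatives $g_1=e, g_2,\dots,g_n$ of $H$ in $G$, so that $G=\bigsqcup_i g_iH$. For $g\in G$ and each $i$, there are a unique index $\sigma_g(i)$ and a unique element $h_i^g\in H$ with $g g_i = g_{\sigma_g(i)}\,h_i^g$; here $\sigma_g$ is a permutation of $\{1,\dots,n\}$. Identifying each copy $\cald_i$ tautologically with $\cald$, I define $\rho(g)$ to send a point $x$ of the copy $\cald_i$ to the point $\phi(h_i^g)(x)$ of the copy $\cald_{\sigma_g(i)}$. This is precisely the left action of $G$ on the induced set $G\times_H\cald$, read off in the chosen representatives, so that $\rho\colon G\to\mathrm{Bij}(\cald')$ is automatically a genuine group homomorphism.

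Next I would check that $\rho(g)$ actually lies in $\IET(\cald')$: on each copy $\cald_i$ it is the composite of the interval exchange $\phi(h_i^g)$ of $\cald$ with the tautological identification $\cald\to\cald_{\sigma_g(i)}$, which is a piecewise translation (piece by piece, the identity). Hence, as a map of $\cald'$, it is a bijection that is a piecewise translation with finitely many discontinuity points, i.e.\ an element of $\IET(\cald')$. For injectivity, take $g_1=e$ and suppose $\rho(g)=\id$. Then $\sigma_g$ is trivial, so $\sigma_g(1)=1$ forces $g\in H$, and $\phi(h_1^g)=\id$ with $h_1^g=g$; since $\phi$ is injective this yields $g=e$. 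Thus $\rho$ is the desired embedding.

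The only genuine point to watch is the third step: one must ensure that permuting the labelled copies is compatible with the interval-exchange structure, not merely with the abstract group action. This is exactly where the flexibility of working over a disconnected domain is used: because an element of $\IET(\cald')$ is allowed to carry a subinterval of one component onto a subinterval of another by a translation, the relabelling $\cald_i\to\cald_{\sigma_g(i)}$ costs nothing and $\rho(g)$ remains inside $\IET(\cald')$. Everything else is the formal bookkeeping of an induced action.
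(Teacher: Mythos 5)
Your proof is correct, and at its core it is the same construction as the paper's: the group acts on a disjoint union of finitely many labelled copies of $\cald$, permuting the copies according to the coset action and acting inside the copies through the given embedding of $H$. The difference is in the packaging. The paper first replaces $H$ by a \emph{normal} finite-index subgroup (no loss of generality, via the normal core), sets $Q=G/H$, and invokes the Kaloujnine--Krasner theorem to embed $G$ into the wreath product $H\wr Q=H^Q\isemidirect Q$; it then embeds this whole wreath product into $\IET(Q\times\cald)$ by exactly the recipe you describe: the coordinates $(h_q)_{q\in Q}$ act fiberwise by $(q,x)\mapsto (q,h_q.x)$ and $Q$ permutes the fibers. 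You instead induce the action directly from an \emph{arbitrary} finite-index subgroup, with the explicit cocycle bookkeeping $gg_i=g_{\sigma_g(i)}h_i^g$ --- which is precisely the standard proof of Kaloujnine--Krasner, unwound. What the paper's route buys is brevity and the slightly stronger intermediate statement that the full wreath product $H\wr Q$ embeds in $\IET$ whenever $H$ does; what your route buys is a self-contained argument needing neither the normality reduction nor the citation. Your verifications that the induced map lands in $\IET(\cald')$ (permuting labelled components is a piecewise isometry, so costs nothing) and that it is injective (restrict attention to the copy $\cald_1$ over the trivial coset) are complete and correct.
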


It is then natural to ask which virtually polycyclic groups embed in $\IET$.
Our first result shows that only virtually abelian ones do.
By a different method, Cornulier \cite{Cornulier} also shows that a virtually polycyclic group
in $\IET$ must be virtually abelian.

\begin{thmbis}[See Corollary \ref{cor;VPC}]
  Let $H$ be a virtually polycyclic group. Then $H$ embeds into $\IET$ if and only if it virtually abelian.
\end{thmbis}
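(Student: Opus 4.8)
The plan is to deduce this from the main theorem on torsion-free solvable subgroups, the only new ingredient being the structure theory of polycyclic groups. The ``if'' direction is already in hand: a virtually polycyclic group is finitely generated, and the discussion preceding the statement shows that every finitely generated virtually abelian group embeds in $\IET$ (combine the embedding of $\bbZ^n$ as rotations with Proposition~\ref{prop_ext}). So the real content is the ``only if'' direction.

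Suppose then that $H$ is virtually polycyclic and embeds in $\IET$. First I would pass to a finite-index polycyclic subgroup $P\le H$. The key structural input is the classical fact that every polycyclic group is virtually poly-(infinite cyclic): $P$ contains a finite-index subgroup $Q$ admitting a subnormal series with all quotients infinite cyclic. Such a $Q$ is finitely generated, solvable (indeed polycyclic), and torsion-free, and it has finite index in $H$, hence embeds in $\IET$ as a subgroup of $H$. Now the main theorem applies to $Q$ and yields that $Q$ is virtually abelian. Finally, since $Q$ has finite index in $H$, the group $H$ is itself virtually abelian (a group containing a virtually abelian subgroup of finite index is virtually abelian), which is the desired conclusion.

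The only real obstacle is this reduction to the torsion-free case, that is, replacing the possibly torsion-bearing group $H$ by a torsion-free finite-index subgroup to which the main theorem can be applied. This rests on knowing that a polycyclic group is virtually poly-$\bbZ$; one standard route is to use that polycyclic-by-finite groups are $\bbZ$-linear (Auslander--Swan), so that Selberg's lemma provides a torsion-free finite-index subgroup, together with the fact that torsion-free polycyclic groups are exactly the poly-$\bbZ$ ones. Everything else in the argument is formal bookkeeping about finite-index subgroups, using only that indices multiply and that the classes of finitely generated, solvable, and torsion-free groups are inherited by subgroups.
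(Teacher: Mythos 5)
Your proof is correct and takes essentially the same route as the paper: the paper deduces the statement from Theorem~\ref{thm;tf_vA} precisely by observing that virtually polycyclic groups are virtually torsion-free (a torsion-free finite-index subgroup being finitely generated, solvable, and embedded in $\IET$, hence virtually abelian), with the ``if'' direction supplied by Proposition~\ref{prop_ext} and the embedding of $\bbZ^n$ as rotations. Your extra detail justifying virtual torsion-freeness via poly-$\bbZ$ subgroups merely fleshes out the standard fact the paper invokes without proof.
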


 Since a polycyclic group is virtually torsion-free,  this result is in fact a corollary of the following theorem which applies to
all torsion-free solvable subgroups of $\IET$.

\begin{thmbis} [Theorem \ref{thm;tf_vA}]\label{thm_solv_intro}
 Every  finitely generated torsion-free solvable subgroup of $\IET$ is
virtually abelian.  
\end{thmbis}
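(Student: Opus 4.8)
The plan is to argue by induction on the derived length of $G$, the crux being to control how $G$ acts on an abelian normal subgroup by conjugation and to show that, up to finite index, this action is trivial. If the derived length is $\le 1$ then $G$ is abelian and there is nothing to prove. In general, let $A$ be a maximal normal abelian subgroup of a suitable finite-index subgroup of $G$ (for instance the last nontrivial term of the derived series, or a Fitting-type radical); then $A$ is torsion-free abelian and normal, and $G$ acts on it via $c\colon G\to\Aut(A)$. The whole theorem reduces to showing that $c(G)$ is finite: once a finite-index subgroup centralizes $A$, the group is abelian-by-(virtually abelian acting trivially), which for a finitely generated torsion-free solvable group is readily seen to be virtually abelian. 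Since derived subgroups of finitely generated solvable groups need not be finitely generated, I would work with a minimal counterexample, or first reduce via the structure theory of finitely generated solvable groups to the two genuinely two-step cases that must be excluded: the metabelian hyperbolic case $\bbZ^m\rtimes\bbZ$ and the nilpotent class-two case (the Heisenberg group).

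The dynamical input is that every element of $\IET$ preserves Lebesgue measure and is a piecewise translation. This makes the translation data of an element — for each translation value $t$, the measure of the set translated by $t$ — invariant under conjugation, since conjugating by a piecewise translation moves the pieces about but preserves both the values and their measures. To exploit the action on $A$ I would decompose the domain into the $A$-periodic part, where $A$ acts with finite orbits, and the finitely many minimal components on which $A$ acts without periodic points. On each minimal component $C$, the unique invariant measure yields a genuine homomorphism $\rho_C\colon A\to\bbR/\bbZ$, the mean rotation number, and the family $(\rho_C)_C$ is equivariant: conjugation by $g\in G$ permutes the finite set of minimal components and carries $\rho_C$ to $\rho_{gC}$, with $\rho_{gC}(gag^{-1})=\rho_C(a)$. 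Assembling these, the conjugation action of $G$ on $A$ is captured, modulo the finite permutation of components, by a $\bbQ$-linear representation on a finite-dimensional space built from the rotation numbers and the periodic data.

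I would then show this representation has finite image, and it is here that torsion-freeness enters. Informally, the non-abelian complexity of a solvable subgroup of $\IET$ arises from finite-order elements permuting a partition of the domain — the lamplighter mechanism exhibited in the examples — and in a torsion-free group no such permutation part survives; so on a finite-index subgroup the permutation of minimal components is trivial and each $\rho_C$ is preserved rather than merely permuted. The remaining possibility is that some $g$ acts on the lattice $\rho(A)$ with infinite order. A hyperbolic such action would exponentially distort elements of $A$, contradicting the polynomial growth of orbits of finitely generated subgroups of $\IET$ (\cite{DFG1}): an infinite-order interval exchange has an infinite orbit, whose number of points along $a,a^2,\dots,a^N$ would be forced to be both $N$ and at most polylogarithmic in $N$. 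Hence the action on $\rho(A)$ is virtually unipotent, and combined with the conjugacy-invariance of the translation data it must be virtually trivial, giving $c(G)$ finite.

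The main obstacle is the nilpotent case, that is, ruling out the Heisenberg group, where the distortion of the centre is only quadratic and so is not excluded by polynomial orbit growth alone. Handling it requires genuine rigidity of commuting minimal interval exchanges: one must show that, through the rotation-number homomorphism, a commuting minimally acting family is rigid enough that $\ker\rho_C$ acts trivially on $C$, so that every hidden non-abelian interaction is confined to the periodic part and is then removed by torsion-freeness as above. Establishing this centralizer rigidity — that a minimal interval exchange commuting with a given family is pinned down by its rotation number — together with the bookkeeping of how the periodic and minimal parts of $A$ fit together under the $G$-action, is where the real work lies; the surrounding group-theoretic scaffolding is comparatively routine.
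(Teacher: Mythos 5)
Your skeleton (induction on derived length, an abelian normal subgroup $A$, control of the conjugation action) matches the paper's, and your distortion remark against a hyperbolic $\bbZ^m\rtimes\bbZ$ is sound; but there are two genuine gaps, located exactly at the step you defer as ``where the real work lies.'' First, the reduction ``it suffices to show $c(G)\subset\Aut(A)$ is finite'' is false as pure group theory: the Heisenberg group acts trivially by conjugation on its centre $A\simeq\bbZ$, is finitely generated, torsion-free and solvable, yet is not virtually abelian. Note also that your induction cannot be applied to $H/A$ when a finite-index $H$ centralizes $A$, since that quotient is not a subgroup of $\IET$. The paper instead proves the stronger statement (Lemma~\ref{lem;lem_on_A_de}) that some finite-index $H<G$ satisfies $A\cap[H,H]=\{1\}$, which kills $H^{(n)}$ outright and lets the induction run inside $\IET$; and obtaining this requires knowing that the image of the \emph{whole} group $G$ --- not merely the action on $A$ --- on each irreducible component where $A$ has an infinite-order element is virtually abelian (Proposition~\ref{prop;io_in_A}).

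Second, the ``centralizer rigidity'' you propose to supply that input is false as stated. You ask that on a minimal component $C$ of $A$ the kernel of the mean rotation number $\rho_C$ act trivially, i.e.\ that a minimal interval exchange be pinned down by its rotation number. But minimal interval exchanges with zero mean rotation number exist in abundance: any $3$-interval exchange of $[0,1)$ with the order-reversing permutation satisfies $\int_0^1\bigl(T(x)-x\bigr)\,dx=0$ identically in the lengths, and Keane's condition makes it minimal for almost every choice of lengths; gluing $[0,1)$ into a circle, such a $T$ is a minimal circle exchange lying in $\ker\rho_C$ and generating an abelian group acting minimally, not trivially, on $C$. (Separately, minimal interval exchanges need not be uniquely ergodic, so ``the unique invariant measure'' should be Lebesgue measure.) The correct rigidity statement --- and the engine of the paper's proof --- is Novak's theorem: a minimal interval exchange either is conjugate to an irrational rotation, in which case its centralizer is the full rotation group, or has virtually cyclic centralizer; combined with the stability theorem (Theorem~\ref{thm;fi_stable}) this gives Propositions~\ref{prop;irred_in_A} and~\ref{prop;io_in_A}. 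Without a proof of a statement of at least this strength, your outline leaves the crux of the theorem unestablished, and the specific substitute you propose cannot be repaired, because rotation numbers simply do not separate minimal interval exchanges from the identity.
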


If we allow torsion, a much greater variety of subgroups exists. 
The first interesting example is an embedding of the lamplighter group  $L=(\bbZ/n\bbZ) \wr \bbZ^k$ in $\IET$.
Note that this group $L$ is solvable (in fact metabelian), has exponential growth, and is not virtually torsion-free.

To describe this embedding, consider the domain  $\cald=(\bbZ/n\bbZ)\times (\bbR/\bbZ)$, a disjoint union
of $n$ circles. Choose $\Lambda\subset \bbR/\bbZ$ a subgroup isomorphic to $\bbZ^k$, and view $\Lambda$ as a group of \emph{synchronized rotations} in $\IET(\cald)$, \ie by making $\theta\in\Lambda$ act on 
$\cald=(\bbZ/n\bbZ)\times (\bbR/\bbZ)$ by $(i,x)\mapsto (i,x+\theta)$.
Consider the interval $I=[0,1/2) \subset \bbR/\bbZ$, and let $\tau$ be
the transformation that is the identity outside $(\bbZ/n\bbZ)\times I$,
and sends $(i,x)$ to $(i+1,x)$ for $x\in I$.
Then the subgroup of $\IET(\cald)$ generated by $\tau$ and $\Lambda$
is isomorphic to $L$. This is illustrated in Figure \ref{fig;1}.  
More generally,  taking $\cald=A\times (\bbR/\bbZ)$ for some finite abelian group $A$, 
this construction  yields  
the following result: 

\begin{propbis}[see Propositions \ref{prop_abelianLL} and \ref{prop;LLk}]
  For any finite abelian group $A$ and any $k\geq 1$, the wreath product
$L=A \wr \bbZ^k$ embeds in $\IET$.
\end{propbis}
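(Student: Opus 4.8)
The plan is to generalize the explicit construction already sketched in the text for $A=\bbZ/n\bbZ$, replacing the cyclic shift of the circles by the left regular representation of the finite abelian group $A$. I would work on the domain $\cald = A\times(\bbR/\bbZ)$, a disjoint union of $|A|$ circles indexed by $A$, and fix a rank-$k$ subgroup $\Lambda\subset\bbR/\bbZ$ acting by synchronized rotations $R_\theta\colon (b,x)\mapsto(b,x+\theta)$; note that $\Lambda\cong\bbZ^k$ is torsion-free. For each $a\in A$ let $\tau_a$ be the identity outside $A\times I$, where $I=[0,1/2)$, and set $\tau_a(b,x)=(ab,x)$ for $x\in I$. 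Because $A$ is abelian, $a\mapsto\tau_a$ is an injective homomorphism, so $T_0:=\{\tau_a:a\in A\}\cong A$; this is the ``lamp at position $0$''. Conjugating by rotations produces lamps at every position: $T_\theta:=R_\theta T_0R_\theta^{-1}$ is supported on $A\times(I+\theta)$, and its element $R_\theta\tau_aR_\theta^{-1}$ acts there by $(b,x)\mapsto(ab,x)$.

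The first key step is to check that the subgroups $T_\theta$, $\theta\in\Lambda$, pairwise commute. On the overlap $(I+\theta)\cap(I+\theta')$, an element of $T_\theta$ and an element of $T_{\theta'}$ both act only on the circle coordinate, by left multiplications $\lambda_a,\lambda_{a'}$, which commute precisely because $A$ is abelian; off the overlap at most one of the two is non-trivial. Hence all the $T_\theta$ commute, so $N:=\langle T_\theta:\theta\in\Lambda\rangle$ is abelian and every element is a finite product $\prod_\theta \tau^{(\theta)}$ with $\tau^{(\theta)}\in T_\theta$ corresponding to some $a_\theta\in A$.

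The heart of the argument, and the step I expect to be the main obstacle, is to show that $N$ has no unexpected relations, i.e. $N\cong\bigoplus_{\theta\in\Lambda}A$. Given $n=\prod_\theta \tau^{(\theta)}$, the transformation $n$ sends $(b,x)$ to $(P(x)\,b,\,x)$, where $P(x)=\prod_{\theta:\,x\in I+\theta}a_\theta\in A$ (the product is unambiguous since $A$ is abelian). If $n=\mathrm{id}$ then, the regular representation being faithful, $P(x)=1$ for all $x$. Now $P$ is piecewise constant, jumping only at the endpoints $\theta$ and $\theta+\tfrac12$ of the active intervals, and crossing a left endpoint $\theta_0$ from left to right multiplies $P$ by the single factor $a_{\theta_0}$. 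Here I would use that $\Lambda$ is torsion-free, so $\tfrac12\notin\Lambda$ and no left endpoint coincides with a right endpoint (more generally any interval length $\ell\notin\Lambda$ works), which guarantees these jumps are simple; then $P\equiv1$ forces $a_{\theta_0}=1$ for every active $\theta_0$. Hence $n=\mathrm{id}$ only when all $a_\theta=1$, giving $N\cong\bigoplus_\Lambda A$.

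It then remains to assemble the semidirect product. By construction $R_\phi T_\theta R_\phi^{-1}=T_{\theta+\phi}$, so $\Lambda$ permutes the summands of $N\cong\bigoplus_\Lambda A$ freely by translation, which is exactly the wreath-product action with index set $\Lambda\cong\bbZ^k$. Finally $\Lambda\cap N=\{1\}$, because any element of $N$ has support contained in a \emph{finite} union of intervals $I+\theta$ and so acts trivially on most of $\bbR/\bbZ$, whereas a non-trivial synchronized rotation moves every point. Therefore $\langle\Lambda,T_0\rangle=N\rtimes\Lambda\cong\bigl(\bigoplus_{\bbZ^k}A\bigr)\rtimes\bbZ^k=A\wr\bbZ^k$, and composing with the isomorphism $\IET(\cald)\cong\IET$ yields the desired embedding; the case $k=1$ (Proposition~\ref{prop_abelianLL}) is the specialization $\Lambda\cong\bbZ$.
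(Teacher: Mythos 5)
Your proof is correct and is essentially the paper's own construction: the same domain $A\times(\bbR/\bbZ)$, the same lamp transformations supported on $A\times(I+\theta)$, the same commutation observation, and the same endpoint-crossing argument to exclude hidden relations --- your condition $\tfrac12\notin\Lambda$, coming from torsion-freeness, plays exactly the role of the paper's remark that the $2n$ endpoints are distinct because $\theta$ is irrational; the paper treats $k=1$ first and then obtains $\bbZ^k$ from $k$ rotations with rationally independent angles, which is just your $\Lambda$. The one flaw is your justification of $\Lambda\cap N=\{1\}$: an element of $N$ need \emph{not} act trivially on ``most of $\bbR/\bbZ$'', since finitely many $\Lambda$-translates of the half-circle $I$ can cover all of $\bbR/\bbZ$ (three translates $I\cup(I+\theta)\cup(I+2\theta)$ suffice for suitable irrational $\theta$ near $\tfrac13$), so elements of $N$ can have full support. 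The conclusion is nevertheless immediate from your own formula $n(b,x)=(P(x)\,b,\,x)$: every element of $N$ fixes the $\bbR/\bbZ$-coordinate, whereas a non-trivial synchronized rotation moves every point --- this is precisely how the paper kills the $t^k$-component of a kernel element.
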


One could try a similar construction, replacing the abelian finite group $A$ by a non-abelian one.
But the group obtained would not be the wreath product. In fact, such a wreath product
cannot embed in $\IET$
as the following result shows.

\begin{thmbis}[Theorem \ref{thm_LLNA}]  If $F$ is a finite group and if $F\wr \bbZ$ embeds as a
  subgroup in $\IET$, then $F$ is abelian.
\end{thmbis}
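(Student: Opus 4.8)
The plan is to assume a faithful embedding $\phi\colon F\wr\bbZ\into\IET$ with $F$ non-abelian and contradict the polynomial orbit growth of finitely generated subgroups of $\IET$ (\cite[2.6]{DFG1}). Write $F_n$ for the image of the $n$-th copy of $F$ and $t$ for the image of the generator of $\bbZ$, so that $tF_nt^{-1}=F_{n+1}$ and $[F_n,F_m]=1$ for $n\ne m$. Fix $a,b\in F$ with $[a,b]\ne 1$, let $a_n,b_n\in F_n$ be the corresponding elements and $c_n=[a_n,b_n]\ne 1$; then $c_n=t^nc_0t^{-n}$, so the sets $C_n=\mathrm{supp}(c_n)$ satisfy $C_n=t^n(C_0)$ and $\lambda(C_n)=\lambda(C_0)>0$ for Lebesgue measure $\lambda$, which $t$ preserves.

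The first step is to find, through a single point, infinitely many lamps whose commutator is nontrivially active, with controlled frequency. Since $t$ preserves $\lambda$, the Birkhoff ergodic theorem applied to $1_{C_0}$ shows that the return-time density $f(x)=\lim_{T}\frac1T\#\{1\le n\le T: t^nx\in C_0\}$ exists a.e.; writing $f=\bbE[1_{C_0}\mid\mathcal I]$ for the invariant $\sigma$-algebra $\mathcal I$ and noting $\int_{\{f=0\}}1_{C_0}=\int_{\{f=0\}}f=0$ on the invariant set $\{f=0\}$ gives $\lambda(C_0\cap\{f=0\})=0$, hence $f(x)>0$ for a.e.\ $x\in C_0$. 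Fix such an $x$. Using $t^nx\in C_0\iff x\in C_{-n}$, we get a density $\delta=f(x)>0$ such that, for all large $T$, at least $\delta T$ indices $n\in\{1,\dots,T\}$ satisfy $c_{-n}(x)\ne x$.

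Next I localise. For such $T$, put $I_T=\{1\le n\le T: c_{-n}(x)\ne x\}$ and consider $G_T=\prod_{n\in I_T}F_{-n}\cong F^{|I_T|}$ acting on the finite orbit $O_T=G_T\cdot x$. The images $\rho(F_{-n})\le\mathrm{Sym}(O_T)$ pairwise commute, and each is non-abelian since $\rho(c_{-n})=[\rho(a_{-n}),\rho(b_{-n})]$ moves $x\in O_T$. On the other hand, each element of $F_{-n}=t^{-n}F_0t^{n}$ has $S$-length $O(n)=O(T)$ for the finite generating set $S=F_0\cup\{t^{\pm1}\}$, so every element of $G_T$ has length $O(T^2)$ and $O_T$ lies in a ball of radius $O(T^2)$; polynomial orbit growth then gives $|O_T|\le P(T)$ for a fixed polynomial $P$.

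The crux, and the step I expect to be the main obstacle, is a purely combinatorial lower bound clashing with this: if $H_1,\dots,H_K\le\mathrm{Sym}(\Omega)$ pairwise commute, each is non-abelian, and each moves a common point $x$ by a commutator, then the orbit of $x$ has size $\ge 2^{cK}$ for some $c=c(F)>0$. Applied with $K=|I_T|\ge\delta T$ this forces $|O_T|\ge 2^{c\delta T}$, contradicting $|O_T|\le P(T)$ for large $T$. To prove the bound I would induct on $K$, tracking $\Omega^{(i)}=\langle H_1,\dots,H_i\rangle\cdot x$ and using the classical fact that the centralizer of a transitive permutation group is semiregular: when $H_i$ carries $x$ out of the previous block $\Omega^{(i-1)}$ the orbit at least doubles, since the $H_i$-translates of $\Omega^{(i-1)}$ are disjoint blocks of equal size; and when $H_i$ keeps $x$ inside $\Omega^{(i-1)}$ it must act semiregularly there, forcing the remaining lamps into a semiregular, hence bounded-order, centralizer, so that one recurses. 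The delicate point is exactly this second case, namely bounding the number of pairwise-commuting non-abelian subgroups of a finite group $D$ by $O(\log|D|)$; this is what converts \emph{non-abelian} into exponential orbit growth and separates the situation from the abelian lamplighters $A\wr\bbZ^k$, which do embed.
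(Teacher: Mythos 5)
Your first three steps are sound and are essentially the same opening moves as the paper's: your use of Birkhoff to find a point whose $t$-orbit hits the support of a fixed non-trivial commutator with positive frequency is a variant of Lemma \ref{lem;Birk}, and the $O(T^2)$ word-length bound together with polynomial orbit growth (\cite[Lemma 6.2]{DFG1}) is exactly how the contradiction is meant to be reached. The genuine gap is the combinatorial lower bound $|O_T|\ge 2^{c|I_T|}$, which carries the entire weight of the theorem, and your sketch of it does not assemble into a proof. Two concrete problems. First, in your sequential induction the ``remaining lamps'' at a Case-2 step need not act on $\Omega^{(i-1)}$ at all: a later $H_j$ preserves $\Omega^{(i-1)}=\langle H_1,\dots,H_{i-1}\rangle\cdot x$ only if $H_j x\subseteq\Omega^{(i-1)}$, so there is no single finite group $D$ into which the Case-2 lamps are ``forced''; the centralizers you invoke are centralizers of \emph{different} groups acting on \emph{different} sets, and they cannot be replaced by the centralizer of the full transitive group $G$, since no non-abelian lamp can lie in $C_{\mathrm{Sym}(O)}(G)$ (it would then centralize itself). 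Second, you have misplaced the difficulty: the fact you flag as delicate --- a finite group $D$ contains at most $O(\log|D|)$ pairwise-commuting non-abelian subgroups --- is in fact the easy part. Pairwise-commuting subgroups $T_1,\dots,T_m\le D$ generate a central product: the kernel of $T_1\times\dots\times T_m\to T_1\cdots T_m$ lies in $Z(T_1)\times\dots\times Z(T_m)$, so $|D|\ge\prod_i[T_i:Z(T_i)]\ge 4^m$. But note that this fact applied to $D=\mathrm{Sym}(O_T)$ only yields $|I_T|=O(|O_T|\log|O_T|)$, i.e.\ a \emph{polynomial} lower bound on the orbit; everything hinges on trapping the lamps inside a group of order at most $|O_T|$, which your induction never achieves.

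The lemma you want is nevertheless true, and your ingredients suffice once reordered. Work with the images $T_i$ of the $H_i$ in $\mathrm{Sym}(O)$, $O$ the full orbit: greedily choose a set $B$ of indices such that $G_B=\prod_{i\in B}T_i$ is transitive on $O$; by your (correct) doubling argument each new index at least doubles the orbit of $x$, so one can take $|B|\le\log_2|O|$. All remaining lamps $T_i$, $i\notin B$, centralize the transitive group $G_B$, hence their product lies in $C_{\mathrm{Sym}(O)}(G_B)$, which is semiregular of order at most $|O|$; the central-product count then gives $4^{K-\log_2|O|}\le|O|$, i.e.\ $|O|\ge 2^{2K/3}$. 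With this repair your argument becomes a complete proof, and a genuinely different one from the paper's: the paper proves no such purely combinatorial statement, but instead runs Birkhoff twice --- once with the normalizer lemma (Lemma \ref{lem;algeb}) to show that every stabilizer $\Stab_{F_0}(x)$ is \emph{normal} (Proposition \ref{prop_normal}), and a second time with Lemma \ref{lem;algeb2}, which uses that normality to turn non-abelian quotients into orbits of size $\ge 2^n$. As submitted, however, your proposal leaves its crucial step unproven (and sketches it along lines that fail), so it is not yet a proof.
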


 It would be
interesting to know which finitely generated wreath products embed in
$\IET$.

Starting from a subgroup $G<\IET(\cald)$ and a finite abelian group $A$, the construction above allows to
construct subgroups of $\IET(\cald\times A)$ isomorphic to $G\semidirect \calf_A$ where $\calf_A$ is a subgroup of
the abelian group $A^{\cald}$. 
We then prove that, in contrast with the torsion-free case, this construction yields a huge variety of isomorphism
classes of solvable subgroups in $\IET$.

\begin{thmbis}[Theorem \ref{thm_uncountable}]\label{thm_uncountable_intro}
There exists uncountably many  isomorphism classes of subgroups of
$\IET$ that are generated  by $3$ elements, and that are solvable of derived
length $3$.
\end{thmbis}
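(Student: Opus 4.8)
The plan is to build uncountably many groups using the semidirect product construction already set up in the paper, and to distinguish their isomorphism classes by an invariant that can take uncountably many values. The key idea: start from a fixed abelian group acting on a domain, and use a single lamplighter-type generator together with a choice of real parameters (rotation angles) to produce the groups; then show that the parameters are recoverable from the abstract group structure, so that uncountably many distinct parameter choices give uncountably many non-isomorphic groups.

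\begin{proof}[Proof sketch]
First I would fix the domain $\cald = A \times (\bbR/\bbZ)$ for a suitable finite abelian group $A$ (for instance $A = \bbZ/n\bbZ$), and inside $\IET(\cald)$ take a rank-two group of synchronized rotations generated by $R_\alpha$ and $R_\beta$ for two irrational angles $\alpha, \beta$, together with the lamplighter element $\tau$ of the construction above. The subgroup $G_{\alpha,\beta}$ generated by these three elements is, by the semidirect product construction recalled before the statement, of the form $\bbZ^2 \semidirect \calf$ with $\calf \subset A^{\cald}$ the ``lamp'' subgroup generated by the $\bbZ^2$-translates of $\tau$. Since $\calf$ is abelian and $\bbZ^2$ acts on it, the commutator subgroup lands inside $\calf$ and the second derived subgroup is trivial on the $\bbZ^2$ part but the whole group is metabelian extended by the rotation dynamics, giving derived length $3$ once the action is genuinely non-trivial; I would check that three generators and derived length exactly $3$ hold for all admissible parameters.

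The heart of the argument is to attach to $G_{\alpha,\beta}$ an isomorphism invariant that encodes the pair $(\alpha,\beta)$ up to the finitely many symmetries forced by $\GL_2(\bbZ)$ and the ambiguities of the construction. The natural candidate is the way $\bbZ^2$ (recovered as a canonical quotient or subgroup, e.g. via the structure of the abelianization modulo torsion, or as the maximal ``rotation'' quotient) acts on the lamp module $\calf$: the support geometry of $\tau$ under the $\bbZ^2$-action records the subgroup $\grp{\alpha,\beta} \subset \bbR/\bbZ$ and more precisely the \emph{commensurability class} or the $\GL_2(\bbZ)$-orbit of the line/lattice $\bbZ\alpha + \bbZ\beta$. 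I would isolate, from the abstract group, the $\bbZ^2$-module structure of the derived subgroup and show that the ranks/relations among lamp elements detect which translates of the half-interval overlap, which in turn detects arithmetic relations among $\alpha,\beta$.

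The main obstacle, which I expect to occupy the bulk of the work, is proving that this invariant is genuinely an \emph{isomorphism} invariant rather than merely an invariant of the given presentation: one must show that any abstract isomorphism $G_{\alpha,\beta} \to G_{\alpha',\beta'}$ necessarily carries the canonical rotation quotient to the canonical rotation quotient and the lamp module to the lamp module, and hence induces a $\GL_2(\bbZ)$-equivalence of the underlying angle data. This requires pinning down these subgroups/quotients in purely group-theoretic terms (e.g.\ the Fitting subgroup, the torsion of various sections, or the unique maximal torsion-free abelian quotient) so that they are preserved under every automorphism. Once that is done, the counting is immediate: there are only countably many $\GL_2(\bbZ)$-orbits meeting any countable set but uncountably many orbits of pairs $(\alpha,\beta)$ overall, so choosing one representative $(\alpha,\beta)$ from each of uncountably many distinct orbits yields uncountably many pairwise non-isomorphic groups $G_{\alpha,\beta}$, each $3$-generated and solvable of derived length $3$, as required.
\end{proof}
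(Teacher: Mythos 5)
There is a fatal gap, and it lies exactly at the point you flag as "the main obstacle": the invariant you want does not exist, because your groups $G_{\alpha,\beta}$ do not depend on $(\alpha,\beta)$ as abstract groups. The injectivity argument of Proposition \ref{prop_abelianLL}, and its extension Proposition \ref{prop;LLk}, show precisely that for any rationally independent irrational angles the group generated by $R_\alpha$, $R_\beta$ and $\tau$ is isomorphic to the abstract wreath product $A\wr\bbZ^2$: the lamp group $\calf$ is the free module $\oplus_{\bbZ^2}A$, because distinct translates of the half-interval have distinct endpoints, so no nontrivial product of translated lamps is the identity. Hence the lattice $\bbZ\alpha+\bbZ\beta$ up to $\mathrm{GL}_2(\bbZ)$ is an invariant of the \emph{embedding}, not of the group; uncountably many parameter choices give one single isomorphism class, and proving your invariant is an isomorphism invariant is not a technical difficulty to be overcome but an impossibility.

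There is a second, independent obstruction showing that no variant of your construction can work: since $\calf$ is abelian and $G_{\alpha,\beta}/\calf\cong\bbZ^2$ is abelian, we get $[G_{\alpha,\beta},G_{\alpha,\beta}]\subset\calf$ and hence trivial second derived subgroup. Your groups are metabelian (derived length $2$), not $3$, and there are only countably many isomorphism classes of finitely generated metabelian groups --- a fact the paper itself invokes at the end of the proof of Theorem \ref{thm_uncountable} --- so any abelian-by-abelian construction produces at most countably many classes. The paper's route is built to escape exactly this: it iterates the lamplighter construction, taking as base group the already non-abelian lamplighter $G=(\bbZ/3\bbZ)\wr\bbZ$ acting on $\cald=(\bbZ/3\bbZ)\times(\bbR/\bbZ)$, and forming $H_J=\calf_J\isemidirect G$, where $\calf_J\subset(\bbZ/2\bbZ)^{\cald}$ is generated by the $G$-translates of the indicator function of an interval $J$. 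The continuous parameter is the \emph{length} of $J$ (the angle $\alpha$ stays fixed), and $\calf_J$ is genuinely not a free module: which translates of the new lamp commute with the old lamp $\sigma$ records whether $R^n(J)\cap I=\es$ (Fact \ref{fact1}). After identifying $\calf_J$ and the torsion subgroup in purely group-theoretic terms inside $H_J$, any abstract isomorphism $H_{J_1}\to H_{J_2}$ transports this intersection pattern (Lemma \ref{lem_main}), and Birkhoff's ergodic theorem converts the pattern into an equality of measures $|J_1-I|=|K'-K|$, which pins $|J_1|$ inside a countable $\bbQ$-vector space determined by $J_2$ (Proposition \ref{prop_modQ}). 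That is the mechanism by which a real parameter survives into the abstract isomorphism class; your construction has no analogue of it.
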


The method we use consists of  embedding many semidirect products in a
way that is related to the twisted embeddings used in 
\cite{JMMS}.

\subsection*{About proofs}

The proof of Theorem \ref{thm_LLNA} saying that $F\wr \bbZ$ does not embed into $\IET$ uses the fact that orbits in $[0,1)$ by a finitely generated subgroup of $\IET$ have
 polynomial growth. 
On the other hand, if $F\wr \bbZ=(\oplus_{n\in \bbZ} F)\isemidirect \bbZ$ embeds in $\IET$, $F$ and its conjugate have to commute
with each other. This gives strong algebraic restrictions on the action on $F$.
Using Birkhoff theorem, we show that if $F$ itself is non-commutative, then the orbit growth of $F\wr \bbZ$ has to be exponential,
a contradiction.

To prove that there are uncountably many groups as in Theorem
 \ref{thm_uncountable_intro}, 
we start with a lamplighter group $G = (\bbZ/3\bbZ) \wr \bbZ$ constructed above on $\cald=(\bbZ/3\bbZ)\times (\bbR/\bbZ)$,
where $\bbZ$ acts on the three circles by setting a generator to act as a
synchronized rotation on them 
 with irrational angle $\alpha$. 
Then, we consider $\cald'=(\bbZ/2\bbZ)\times \cald$, on which we make
$G$ act diagonally.
Then we choose an interval $J$ in $\{0\}\times (\bbR/\bbZ) \subset
\cald$ and define $\tau_J$ on $\cald'$ by the identity
outside $(\bbZ/2\bbZ)\times J$, and by $(i,x)\mapsto (i+1,x)$ on
$(\bbZ/3\bbZ)\times J\subset \cald'$. This is illustrated in Figure \ref{fig;2}.
The group $H$ generated by $G$ and $\tau_J$ is a homomorphic image of
the wreath product $(\bbZ/2\bbZ) \wr G$, but this is not an
embedding.  
Still, the group $H$ generated by $G$ and $\tau_J$ has the structure
of a semidirect product $G\semidirect \calf$ where $\calf$ is an
infinite abelian group of exponent $2$. 
 Using Birkhoff ergodic theorem, we prove that from the isomorphism class of $H$, one can read  off the length of $J$ modulo a countable additive group.
This proves that by  varying length of $J$, we get uncountably many distinct isomorphism classes of groups $H$.

The classification of torsion-free solvable subgroups of $\IET$
  in Theorem \ref{thm_solv_intro} is based
on the fact that centralizers of a \emph{minimal} interval exchange transformation $T$ is small.
Indeed, if $T$ is an irrational rotation on a circle, then its centralizer consists of the whole group of rotations on this circle;
and if $T$ is not conjugate to such a rotation, a theorem by Novak \cite{No} shows that its centralizer is virtually cyclic.  
If $T$ is not minimal, its centralizer can be much larger: it will for instance contain a group isomorphic to $\IET$ if
$T$ fixes a non-empty subinterval.

Thus, we need to
understand the orbit closures of a finitely generated group, and we also need 
to understand how it varies when we pass to a subgroup of finite index.

A result by Imanishi \cite{Ima} about the holonomy of codimension 1 foliations shows that for each finitely generated group
$G<\IET(\cald)$, there is a partition of $\cald$ 
into finitely many $G$-invariant subdomains\footnote{A subdomain of $\cald$ is a subset that consists of finitely many semi-open intervals, closed on the left},
such that in restriction to each subdomain, either every orbit is dense (such a subdomain is called an \emph{irreducible component}), 
or every orbit is finite of the same cardinal (and one can say more,
see Proposition \ref{prop_Imanishi}). 
In particular, for each $x\in\cald$, $\overline{G.x}$ is either finite, or is the closure of a subdomain (not a Cantor set).

When passing to a finite index subgroup $G_0$ of $G$, it could happen that an irreducible component for $G$ splits into several 
irreducible  components for $G_0$.

For example, consider $\cald=(\bbZ/2\bbZ)\times(\bbR/\bbZ)$, and consider the subgroup $G$ of $\IET(\cald)$ generated by the three following tranformations $\tau,R_0,R_1$.
Let $\tau$ be the involution $(i,x)\mapsto (i+1,x)$;
let $\alpha\in\bbR\setminus\bbQ$ 
and let $R_0$ be the rotation of angle $\alpha$ 
on the circle $\{0\}\times (\bbR/\bbZ)$ 
and as the identity on the circle $\{1\}\times (\bbR/\bbZ)$;
and let $R_1=\tau R_0\tau\m$.
Then  $\grp{R_0,R_1}\simeq\bbZ^2$, and $G=\grp{\tau,R_0}\simeq (\bbZ \oplus \bbZ) \isemidirect (\bbZ/2\bbZ)$,
and  any orbit of $G$ is dense.  

However,  $G_0=\grp{R_0,R_1}$ is a finite index
subgroup which preserves each circle. 
 The two circles are the irreducible components of $G_0$.  But this phenomenon cannot occur any more when 
passing to a further finite index $G_1$ of $G_0$ because
 $G_1$ has to contain an irrationnal rotation on each
circle, thus ensuring that 
the two circles are still irreducible components of $G_1$. 
This group $G_0$ is what we call \emph{unfragmentable}.
The following technical result  of independent interest shows that this is a general fact.

\begin{thmbis}[see Theorem \ref{thm;fi_unfragmentable}]
  Given a finitely generated group $G<\IET$, there exists a finite index subgroup $G_0$ which is \emph{unfragmentable} in the following sense:
  \begin{enumerate}
  \item $G_0$ acts by the identity outside its irreducible components
  \item If $G_1$ is a finite index of $G_0$, any irreducible component
    of $G_0$ is also an irreducible component of $G_1$.
  \end{enumerate} 
\end{thmbis}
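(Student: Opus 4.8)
The plan is to produce $G_0$ by passing to a finite-index subgroup realizing the \emph{finest possible} Imanishi decomposition, and to isolate the single genuinely hard point as a uniform bound on the number of irreducible components. First I would set up the combinatorics of how the decomposition of Proposition \ref{prop_Imanishi} behaves under passing to a finite-index subgroup $H\le G$. Since every $G$-invariant subdomain is a fortiori $H$-invariant, and since a point with finite $G$-orbit has finite $H$-orbit, the finite-orbit region can only grow, and each irreducible component $\Lambda$ of $G$ breaks up into irreducible components of $H$ together with (a priori) some new finite-orbit region inside $\Lambda$. Two elementary observations make this usable. (Monotonicity.) A $G$-component $\Lambda$ carries at least one infinite $H$-orbit, since otherwise every $G$-orbit in $\Lambda$, being a union of at most $[G:H]$ $H$-orbits, would be finite; hence the number of irreducible components is non-decreasing when passing to a finite-index subgroup. (No shrinking.) If moreover $H\normal G$, then $G$ permutes the $H$-orbits inside $\Lambda$ preserving their cardinality, so both the $H$-finite-orbit points and the $H$-minimal region inside $\Lambda$ are $G$-invariant subdomains; as $\Lambda$ is $G$-minimal it has no proper nonempty invariant subdomain, and since the $H$-minimal region is nonempty, the $H$-finite-orbit part of $\Lambda$ must be empty. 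Thus for normal $H$, and then by passing to normal cores for arbitrary finite-index $H$, each $G$-component is an honest (non-shrunk) union of $H$-components.

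Granting a uniform bound $N$ on the number of irreducible components over all finite-index subgroups of $G$, the theorem follows quickly. I would choose a finite-index $G_0'$ attaining the maximum number of components. For any finite-index $H\le G_0'$, the count is both $\ge$ and $\le$ the count for $G_0'$, hence equal; combined with the no-shrinking observation this forces each $G_0'$-component to remain a single $H$-component, which is exactly stability condition (2). To arrange condition (1), I would invoke the finiteness of the holonomy on the finite-orbit region (part of Proposition \ref{prop_Imanishi}): the action of $G_0'$ there factors through a finite group, and I let $G_0\le G_0'$ be the finite-index kernel. By the stability just proved, $G_0$ has the same irreducible components as $G_0'$, it creates no new components in the finite-orbit region because it acts trivially there, and it still attains the maximal count; hence $G_0$ satisfies both (1) and (2).

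The crux, and the step I expect to be the main obstacle, is the uniform bound $N$ on the number of irreducible components over all finite-index subgroups. After reducing to $H\normal G$ and to a single $G$-component $\Lambda$ (on which $G$ acts minimally), the $H$-components are permuted transitively by the finite group $G/H$, so their number is the index of the stabilizer of one of them; the difficulty is that a priori this could grow without bound as $H$ shrinks. The mechanism that prevents this, and which I would aim to quantify, is that such splittings are detected by the \emph{finite-order (roots-of-unity) part of the spectrum} of the minimal action on $\Lambda$, equivalently by a finite equicontinuous quotient, and for interval exchanges this torsion is finite, bounded in terms of the combinatorial complexity of the generators. Here I expect the dichotomy behind Novak's theorem to be the right tool: either the minimal action on $\Lambda$ is (semiconjugate to) an action by rotations, in which case any finite-index subgroup still contains an irrational rotation and $\Lambda$ does not split at all, or the action is rigid with small centralizer, and one must show directly that only finitely many finite invariant partitions of $\Lambda$ can arise. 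Making this uniform, independent of the index $[G:H]$, is the heart of the matter; once it is in place, the maximization argument above closes the proof.
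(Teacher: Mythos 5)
Your outer scaffolding is essentially correct, and in one respect more careful than the paper itself: the monotonicity observation, the no-shrinking argument via normal cores (pure component counting alone does not exclude that a component of $G_0$ loses a finite-orbit region without splitting, a point the paper's own proof leaves implicit), and the final maximization all work. But the one statement you ``grant'' --- a uniform bound on the number of irreducible components over all finite-index subgroups --- is not a deferrable technicality: it is the entire content of the proof of Theorem \ref{thm;fi_stable}, and the route you sketch for it points in the wrong direction. The obstruction is that Proposition \ref{prop_Imanishi}, which is what turns ``irreducible components'' into unions of connected components of the domain (and hence into something you can count against a fixed finite quantity), only applies to a group equipped with a symmetric generating set $S$ satisfying $D_f(S)=\es$. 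For the ambient group $G$ one arranges this by cutting the domain (Remark \ref{rem;rem}); but if you have to re-cut for each finite-index subgroup $H$, the number of connected components of the domain grows with $H$ and no uniform bound results. Note that your preliminary steps already use this silently: the assertions that the $H$-finite-orbit locus and the $H$-components inside a $G$-component are subdomains (unions of connected components) are themselves applications of Imanishi to $H$, so they require exactly the input that is missing.

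The missing ingredient, which is the paper's key lemma, is purely combinatorial: if $D_f(S)=\es$ for $G$, then \emph{every} finite-index subgroup $G_0<G$ admits a finite symmetric generating set $S_0$ with $D_f(S_0)=\es$, over the \emph{same} domain $\cald$. (The paper proves this by an argument with orbit graphs of discontinuity points, a maximal tree, and coset representatives of $G_0$ in $G$, augmenting the generating set to reconnect finitely many pieces of regular orbits.) Granting this, Imanishi applies to every finite-index $G_0$ on the fixed domain, each irreducible component of $G_0$ is a union of connected components of $\cald$, and the uniform bound is simply $n(\cald)$, the number of connected components of the cut domain --- no spectral theory, no equicontinuous quotients, no finite/torsion part of a spectrum. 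By contrast, the mechanism you propose is both speculative and misdirected: Novak's theorem concerns the centralizer of a single transformation with positive discontinuity growth (it is used in this paper for Proposition \ref{prop;irred_in_A}, not here), and your alternative branch --- showing ``directly that only finitely many finite invariant partitions of $\Lambda$ can arise,'' uniformly in the index --- is not easier than the original problem; it is the original problem. So: correct shell, but the core is absent, and the tool you reach for to supply it is not the one that works.
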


\section{Generalities}

\subsection{Definitions}
\label{S1-def}

\begin{dfn}
In all the following, a \emph{domain} will be a non-empty disjoint union of
finitely many oriented circles, and oriented half-open, bounded
intervals (closed on the left). 

Given a domain $\cald$, the group  $\IET(\cald)$  of interval exchange transformations
of $\cald$ is the group of bijections of $\cald$ that are orientation
preserving piecewise isometries, left continuous with finitely many 
discontinuity points.
\end{dfn}

By convention, we define $\IET= \IET([0,1))$.

Given two disjoint domains $\cald_1, \cald_2$ having the same total length,
there is an element of
$\IET(\cald_1 \sqcup \cald_2)$ that sends $\cald_1$ on
$\cald_2$.   We
call such an element an interval exchange bijection from $\cald_1$ to
$\cald_2$. This element  
then conjugates $\IET(\cald_2)$ to $\IET(\cald_1)$.  Observe also that
rescaling a domain doesn't change its group of interval exchange
transformations. In particular,  for any domains $\cald_1, \cald_2$, 
$\IET(\cald_1)$ and $\IET(\cald_2)$    
  are always isomorphic.

A subdomain $\cald_0\subset \cald$ is a subset of $\cald$ which
has finitely many connected components, and which is closed on the left.
If $\cald_0$ is a subdomain  of $\cald$ that is invariant by the action
of $G$, then $G$ naturally maps  to $\IET(\cald_0)$ by restriction. We denote by $G_{|\cald_0}$
its image in $\IET(\cald_0)$.
Moreover, if
$\cald = \cald_0 \dunion \cald_1$ where both subdomains are invariant
by $G$, then the induced morphism from $G$ to $\IET(\cald_0) \times
\IET(\cald_1)$ is injective.

\subsection{Finite extensions}
\begin{prop}\label{prop_ext}
  Let $G$ be a group, and assume that some finite index subgroup of $G$ embeds in $\IET$.
Then so does $G$.
\end{prop}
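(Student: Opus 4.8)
The plan is to use the induced-action construction for finite-index subgroups. Write $H\leq G$ for the given finite-index subgroup, $n=[G:H]$, and $\phi\colon H\into\IET([0,1))$ for the embedding. I would consider the domain $\cald=(G/H)\times[0,1)$, a disjoint union of $n$ copies of the half-open interval indexed by the left cosets of $H$; this is a domain in the sense of Section \ref{S1-def}, so in particular $\IET(\cald)\simeq\IET$. On $\cald$ I would put the $G$-action induced from $\phi$: fixing coset representatives $g_1=1,g_2,\dots,g_n$, every point of $\cald$ is uniquely of the form $(g_iH,x)$, and for $g\in G$ one writes $gg_i=g_{\sigma(i)}h_i$ with $\sigma$ a permutation of $\{1,\dots,n\}$ and $h_i\in H$ (both depending on $g$), and sets $g\cdot(g_iH,x)=(g_{\sigma(i)}H,\phi(h_i)(x))$. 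Invariantly, this is the natural left $G$-action on $G\times_H[0,1)$.

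The steps would then be the following. First I would check that $g\mapsto\Phi(g)$ is a homomorphism into $\IET(\cald)$: on components $\Phi(g)$ permutes the $n$ intervals according to $\sigma$, and on each interval it acts by some $\phi(h_i)\in\IET([0,1))$, hence it is an orientation-preserving, left-continuous piecewise isometry with finitely many discontinuity points, that is, an element of $\IET(\cald)$. Next I would check injectivity: if $\Phi(g)=\id$, then in particular $g$ fixes the component $g_1H=H$ pointwise, which by the definition of the induced action forces $g\in H$ and $\phi(g)=\id$, whence $g=1$ since $\phi$ is injective. Finally I would conclude using $\IET(\cald)\simeq\IET$, obtaining the desired embedding $G\into\IET$.

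I do not expect a genuine obstacle here: this is just the standard induced-representation construction transported to the setting of piecewise isometries. The only point requiring a line of care is confirming that the induced map is really an interval exchange of $\cald$ rather than merely a bijection, and this is immediate, since shuffling the finitely many interval-components and applying an element of $\IET([0,1))$ on each is again an element of $\IET(\cald)$.
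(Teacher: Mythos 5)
Your proof is correct, but it is organized differently from the paper's. The paper first replaces the finite-index subgroup $H$ by its normal core so as to assume $H\normal G$, then invokes the Krasner--Kaloujnine universal embedding theorem to embed $G$ into the wreath product $H\wr Q=H^Q\isemidirect Q$ with $Q=G/H$, and finally embeds this wreath product into $\IET(Q\times\cald)$ by letting $H^Q$ act coordinatewise and $Q$ act by permuting the copies of $\cald$. You instead perform the induction directly: you let $G$ act on $(G/H)\times[0,1)$ via coset representatives and the cocycle $gg_i=g_{\sigma(i)}h_i$, which is exactly the induced action on $G\times_H[0,1)$, and you obtain injectivity by restricting to the component of the trivial coset, where the action is $\phi(g)$ for $g\in H$. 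The two constructions coincide once one unwinds the proof of Krasner--Kaloujnine, which is proved by precisely this coset-representative cocycle computation; so the underlying idea --- finitely many copies of the domain, permuted according to the action on cosets and twisted by elements of $\phi(H)$ --- is the same. What your route buys is self-containedness: no normal-core reduction, no appeal to an external algebraic theorem, and no need for normality of $H$ at any point. What the paper's route buys is brevity and modularity: the explicit embedding of the wreath product $H\wr Q$ into $\IET$ is a statement of independent interest, and it prefigures the lamplighter-type constructions that occupy the later sections of the paper.
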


\begin{proof}
Without loss of generality, consider $H<G$ a normal subgroup of finite index that embeds in $\IET(\cald)$ for some domain $\cald$.
Let $Q$ be the finite quotient $Q=G/H$.
It is a classical algebraic fact (see \cite{KK}) that $G$ embeds in
the wreath product $H\wr Q=H^Q\isemidirect Q$ (where $Q$ acts on $H^Q$
by permuting coordinates). 
Thus, it suffices to show that $H^Q\isemidirect Q$ embeds in $\IET$.

Consider the domain $\cald'=Q\times \cald$, and embed $H^Q$ in $\IET(\cald')$ by
making $(h_q)_{q\in Q}$ act on $\cald'=Q\times\cald$ by  $(q,x)\mapsto (q,h_q.x)$.
Then $Q$ acts on $\cald'$ by left multiplication on the left coordinate.
This naturally extends to a morphism $H^Q\isemidirect Q\ra
\IET(\cald')$ which is clearly  one-to-one. 
\end{proof}

\section{Irreducibility and unfragmentability for finitely generated subgroups
of $\IET$}

\subsection{$\IET$  and irreducibility} \label{S1-irred}

Let $\cald$ be a domain.  
Let $G=\langle S\rangle$ be a finitely generated subgroup of the
group $\IET(\cald)$ of interval exchange transformations on $\cald$,  with $S$ symmetric.

\begin{dfn}[Irreducibility.]
We say that $G$ is \emph{irreducible} (on $\cald$)  
if no subdomain of $\cald$ is invariant under $G$. 

We say that  a subdomain $J$ 
of $\cald$ is an \emph{irreducible component}
for $G$ if it is $G$-invariant, and if $G$ restricted to $J$ is irreducible.
\end{dfn}

We will see that a finitely generated group $G$ is irreducible if and only if every $G$-orbit is dense in $\cald$ (see Corollary \ref{cor_irred}).

If $s\in \IET(\cald)$, we denote by  $\Disc(s)\subset \rond\cald$ the set of discontinuity
points of $s$. If $S$ is a set of elements of $\IET(\cald)$, 
 we denote by $\Disc(S)=\bigcup_{s\in S} \Disc(s)$ the set of discontinuity points of
elements of $S$.

We say that $x,y\in\rond{\cald}$ are in the same 
\emph{regular orbit} if there exist $g\in G$ continuous at $x$ with $g(x)=y$.
We say that $x,y\in\rond{\cald}$ are in the same \emph{$S$-regular orbit}
if there exists  $g_1,\dots ,g_n\in S$ such that $y=g_n\dots g_1(x)$, and 
for all $i\leq n$, $g_{i-1}\dots g_1(x)\in\rond \cald\setminus \Disc(g_i)$.
We denote by $\Reg(x,G)\subset \rond \cald$ the regular orbit of $x$, and by $\Reg(x,S)\subset \Reg(x,G)$ its $S$-regular orbit.
Although arguably less natural, the notion of $S$-regular orbit is the one that is needed to apply Imanishi Theorem below.
On the other hand, Lemma \ref{lem_reg} below will show that one can choose $S$ so that 
$\Reg(x,S)= \Reg(x,G)$ for all $x\in \rond \cald$.

Let $$\Sing(G)=\{x\in \rond D|\exists g\in G,\ x\in \Disc(g)\}.$$
By definition, $\Sing(G)$ does not depend on any generating set,
but one easily checks that for $x\in \rond \cald$, $x\in \Sing(G)$ if and only if
its $G$-orbit contains a point in $\Disc(S)$, if and only if 
$\Reg(x,S)$ contains a point in $\Disc(S)$.
Thus, $\Sing(G)$ is a union of at most $\#\Disc(S)$ $S$-regular orbits.
We also note that for all $x\in\rond\cald\setminus\Sing(G)$, $\Reg(x,S)=\Reg(x,G)=G.x\cap \rond \cald$.

We denote by $E(G)\subset \Sing(G)$ the set of points $x\in \Sing(G)$ whose regular orbit is finite,
and by $E(S)$ the set of points $x\in \Sing(G)$ whose $S$-regular orbit is finite.
We note that $E(G)\subset E(S)$ and that these sets are finite since
$$E(S)=\bigcup_{x\in\Disc(S),\#\Reg(x,S)<\infty} \Reg(x,S)$$
is a finite union of finite sets.

 We will need to apply a decomposition theorem that applies for finite systems of isometries, so let us introduce the corresponding terminology (see \cite{GLP1}).
Let $\ol\cald$ be the obvious compactification of $\cald$ as a union of compact intervals and circles. A partial isometry on $\ol \cald$ is
an isometry $\phi:I\ra J$ between two closed subintervals $I,J\subset \cald$.
Given a finite set $S$ of interval exchanges on $\cald$,
one can construct a \emph{system of isometries} on $\ol\cald$ as follows:
for each $s\in S$, let $I_1,\dots,I_{n_s}$ be the maximal connected subdomains on which $s$ is continuous.
These intervals define a partition of $\cald$. For each $i\leq n_s$, we let $\phi_i$ be the partial isometry
defined on $\ol I_i$ that extends $s_{|I_i}$. 
Thus, for each element of $s$, we have a finite collection of partial isometries,
and we denote by $X$ the collection of all partial isometries of $\ol \cald$ obtained from all the elements of $S$ in this way.
Orbits of $X$ are defined in the natural way. Clearly, each $G$-orbit is contained in an $X$-orbit.
One defines $\rond{X}$-orbits similarly using the restriction of all the partial isometries to the interior of their domains.
Then for all $x\in\rond D$, its $\rond{X}$-orbit coincides exactly with its $S$-regular orbit $\Reg(x,S)$.

Specifying \cite[Th. 3.1]{GLP1} to our setting, we get:

\begin{prop}[Imanishi  theorem \cite{Ima}, see Th. 3.1 in \cite{GLP1}]\label{prop_Imanishi}
Let $S$ be a finite symmetric set of interval exchanges on $\cald$, and $G=\grp{S}<\IET(\cald)$. 

Then $\rond\cald\setminus E(S)=\rond\cald_1\dunion\dots \dunion \rond \cald_p$ where each $\rond \cald_i$ is an open subset of $\cald\setminus E(S)$
invariant under $S$-regular orbits
 and such that
for each $i\leq p$, one of the following holds:
\begin{enumerate}
\item 
every $S$-regular orbit in $\rond\cald_i$ is dense in $\rond\cald_i$
\item every $S$-regular orbit in $\rond\cald_i$ is finite, of the same cardinality,
and $\rond \cald_i\cap \Sing(G)=\es$.
\end{enumerate}
\end{prop}

\begin{rem} 
Theorem 3.1 of \cite{GLP1} is stated for systems of isometries on a finite union of intervals, but
generalizes immediately to the case where we allow circles in the domain
(alternatively, we could restrict to this case by cutting $\cald$
along a point in each circle of $\cald$).
Our assertion 2 thus includes the possibility that $\rond\cald_i$ is a disjoint union of circles, and that all
elements of $G$ act continuously on $\cald_i$. 
Since all elements of $\IET$ preserve the orientation, there cannot be any twisted family of finite orbits in the sense of \cite[Th 3.1]{GLP1}.
\end{rem}

\begin{cor}\label{cor_Imanishi}
Let $G<\IET(\cald)$ be a finitely generated group.

Then $\cald$ decomposes into $G$-invariant subdomains $\cald=\cald_{\infty}\dunion \cald_{\fin}$
with $\cald_{\infty}=I_1\dunion \dots\dunion I_r$ and 
$\cald_{\fin}=J_1\dunion \dots\dunion J_t$
where each $I_i$ and $J_j$ is a $G$-invariant subdomain whose boundary is contained in  $E(S)\cup \partial\cald$, 
and such that
\begin{enumerate}
\item for all $i\leq r$, every $G$-orbit in $I_i$ is dense in $I_i$
\item for all $j\leq r$,  $G$ acts on $J_j$ with finite orbits, all of the same cardinality;
the restriction to $J_j$ of any $g\in G$ is continuous, and in
particular, $G$ permutes the connected components of $J_j$.
\end{enumerate}

Moreover, the collection of irreducible 
components of $G$, $\Irred(G)=\{I_1, \dots, I_r\}$, is unique.
\end{cor}

\begin{rem}
Note that whereas $\Irred(G)=\{I_1, \dots, I_r\}$ is  uniquely 
defined, we don't claim  that the decomposition of the complement $\cald \setminus (\bigcup_i I_i) $ into $G$-invariant subdomains upon
which $G$ acts as a finite group is unique  (although one could easily construct such a canonical decomposition). 
\end{rem}

\begin{proof}
Take $S$ a finite symmetric generating set of $G$. Write $\rond \cald\setminus E(S)=\rond \cald_1\dunion\dots\dunion \rond \cald_p$
as in Proposition \ref{prop_Imanishi}.
Let $\cald_i\subset \cald$ be the smallest subdomain of $\cald$ containing $\rond \cald_i$ (i.e.\ the set of points to which are limits of points in $\rond \cald_i$ from the right), so that $\cald=\cald_1\dunion \dots \dunion \cald_p$.
Then $\cald_i$ is $G$-invariant because if $g\cald_i\cap \cald_j\neq \es$, then $\cald_i\cap g\m \cald_j$ contains an interval
and for any point $x\in \rond \cald\setminus \Sing(G)$ in this interval,
we have that $g(x)\in \Reg(x,S)\cap \cald_j$ contradicting that $\rond\cald_i$ is invariant under $S$-regular orbits.
We denote by $I_1,\dots,I_r$, $J_1,\dots,J_t$ the subdomains of $\cald_1,\dots,\cald_p$ (with $r+t=p$) 
according to whether
they satisfy the first or the second assertion of Proposition \ref{prop_Imanishi}

Then for all $i\leq r$ and every $x\in \rond I_i$, its $G$-orbit is dense in $I_i$ because it contains $\Reg(x,S)$.
If $x\in I_i\setminus \rond I_i$, then its $G$-orbit has to contain a point in $\rond I_i$. Indeed, otherwise,
the $G$-orbit of $x$ would be finite, and so would be the $G$-orbits of points $x'$ in a right neighbourhood of $x$,
contradicting that regular orbits are dense in $\rond I_i$.

For $j\leq t$, 
then since $\rond J_j\cap \Sing(G)=\es$, $G$ acts continously on $\rond J_j$, so $g_{|J_j}$ is continuous for every $g\in G$.

The uniqueness of $\{I_1,\dots,I_r\}$ is immediate from the fact that $I_i$
is $G$-invariant and that every $G$-orbit is dense in $I_i$.
\end{proof}

The following corollary is clear.

\begin{cor}\label{cor_irred}
A subgroup $G<\IET(\cald)$ is irreducible if and only if every $G$-orbit is dense in $\cald$.\qed
\end{cor}

\begin{rem}\label{rem;rem} 
In the case where $E(S)=\es$, each subdomain $I_i$, $J_j$ appearing in Corollary \ref{cor_Imanishi} is a union of connected components of $\cald$.
We can always cut the domain $\cald$ to reduce to this situation as follows.

 We  say that we cut  a domain $\cald_1$ along a finite set
of points if we map it to some domain $\cald_2$ by an interval
exchange bijection $\tau: \cald_1 \to  \cald_2$ that is discontinuous
exactly on this set
of points, and such that $\tau^{-1}$ is continuous. For instance, if
$\cald_1 =[a,b)$, cutting along $c\in [a,b)$ yields a domain $\cald_2=
[a,c) \sqcup [c,b)$, and if $\cald_1$ is a circle, cutting along one
point yields a domain that is a half-open interval.
If $G_1\subset \IET(\cald_1)$, we then get by conjugation a group $G_2\subset \IET(\cald_2)$ (in particular, $G_2$ is isomorphic to $G_1$).

Now consider $G<\IET(\cald)$, and $S$ is a symmetric generating set of $G$. 
Then cutting $\cald$ along the (finitely many) points in $E(S)$
yields a domain $\cald'$ and a group $G' <\IET(\cald')$
 conjugate to $G$ by an interval exchange bijection from $\cald$ to $\cald'$
  such that $E(S')=\es$ (where $S'$ is the conjugate of $S$).
\end{rem}

\subsection{Virtual unfragmentability for finitely generated groups of IET}

Let $G$ be a finitely generated subgroup of $\IET(\cald)$.

\begin{dfn}[Unfragmentability] 
We say that $G$ is \emph{unfragmentable} if for any subdomain $J\subset \cald$ which is invariant by a finite index subgroup of $G$,
$J$ is $G$-invariant.
\end{dfn}

We say that an element $a\in \IET(\cald)$ is \emph{unfragmentable} if the cyclic group $\grp{a}$ is unfragmentable.

Here is an equivalent definition.

\begin{lem}
The subgroup $G$ is \emph{unfragmentable} if each of its finite orbits is
trivial, and if, for every irreducible component  $J$ of $G$, and every
finite index subgroup $H$ of $G$,     the restriction of $H$ on
$J$ is  irreducible.      
\end{lem}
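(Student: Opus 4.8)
The plan is to read everything off the Imanishi decomposition recorded in the Corollary above: write $\cald = I_1 \disjoint \cdots \disjoint I_r \disjoint \cald_0$, where $\mathrm{Irred}(G)=\{I_1,\dots,I_r\}$ are the irreducible components and $\cald_0 = \cald\setminus\bigcup_i I_i$ is the part on which $G$ acts as a finite group. The whole argument rests on two elementary facts attached to this decomposition. First, on an irreducible component, \emph{irreducible} is by definition the same as \emph{having no proper non-empty invariant subdomain}. Second, a point has finite $G$-orbit exactly when it lies in $\cald_0$ (orbits inside an $I_i$ are dense, hence infinite since $I_i$ has positive length), so the hypothesis ``every finite orbit is trivial'' means precisely that $G$ fixes $\cald_0$ pointwise.

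First I would prove the forward implication, that stability implies the two conditions, by contraposition. For the finite-orbit condition, suppose $G$ does not fix $\cald_0$ pointwise. On $\cald_0$ every element of $G$ restricts to a continuous piecewise translation, hence to an orientation-preserving isometry (a rotation on each circle, a translation or identity on each interval), and since all orbits on $\cald_0$ are finite the resulting group of isometries is finite; thus the action on $\cald_0$ factors through a finite quotient whose kernel $K$ is a finite index subgroup of $G$ acting trivially on $\cald_0$. If some $x\in\cald_0$ and $g\in G$ satisfy $g.x\neq x$, choose a small half-open subinterval $U\subset\cald_0$ (a sub-arc when the component is a circle) containing $x$ but not $g.x$. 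Then $U$ is $K$-invariant, since $K$ fixes $\cald_0$ pointwise, whereas $g.U\neq U$, contradicting stability. For the irreducibility condition, suppose a finite index subgroup $H\leq G$ restricts to a non-irreducible action on some irreducible component $J=I_i$; then $J$ contains a proper non-empty $H$-invariant subdomain $J'$, and since $H$ has finite index, stability forces $J'$ to be $G$-invariant, contradicting irreducibility of $G$ on $J$.

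For the converse I would assume the two conditions and take an arbitrary subdomain $J$ invariant under some finite index subgroup $H\leq G$, the goal being $G$-invariance of $J$. Intersecting $J$ with the decomposition writes it as the disjoint union of the pieces $J\cap I_1,\dots,J\cap I_r$ and $J\cap\cald_0$. Each $J\cap I_i$ is an $H$-invariant subdomain of $I_i$, and the second hypothesis makes $H$ irreducible on $I_i$, so $J\cap I_i$ is either empty or all of $I_i$; in either case it is $G$-invariant. The piece $J\cap\cald_0$ is fixed pointwise by $G$ by the first hypothesis, hence is $G$-invariant as well. Thus $J$ is a disjoint union of $G$-invariant subdomains and is therefore $G$-invariant.

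The routine ingredients — that a continuous element of $\IET$ restricts to an isometry and that $\cald$ decomposes as above — are furnished by the Corollary, and both the irreducibility half of the forward direction and the entire converse are then purely formal. I expect the only genuinely delicate step to be the finite-orbit half of the forward direction, where one must upgrade ``non-trivial finite orbits'' into a concrete finite index subgroup (the kernel $K$) together with a witnessing subdomain ($U$) that $K$ preserves but $G$ does not. This is the single place where the geometry is really used, namely the fact that orientation-preserving isometries with finite orbits factor through a finite group, and it is where I would be most careful.
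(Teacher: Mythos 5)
Your proof is correct and follows essentially the same route as the paper's: the same decomposition $\cald = I_1 \sqcup \dots \sqcup I_r \sqcup \cald_0$ coming from the Corollary, the same contradiction arguments for the two halves of the forward direction (a subdomain of $\cald_0$ moved by $G$ but invariant under the finite-index kernel, and a proper invariant subdomain inside some $I_i$), and the same intersection argument $J = \bigsqcup_i (J\cap I_i) \sqcup (J\cap\cald_0)$ for the converse. The one cosmetic blemish is your detour through continuity and isometries to produce the finite-index subgroup $K$ acting trivially on $\cald_0$: continuity of restrictions to $\cald_0$ is not guaranteed without first cutting the domain, but this detour is unnecessary, since the Corollary's statement that $G$ acts as a finite group on $\cald_0$ already yields that kernel directly, which is exactly what the paper invokes.
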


\begin{proof}
Let $I_1,\dots,I_r\subset \cald$ be the
  irreducible components of $G$, and
  $I'=\cald\setminus (I_1\cup\dots\cup I_r)$.

Assume that $G$ is unfragmentable.  If $G$ has a non-trivial finite orbit
  (necessarily in $I'$), then there exists a subdomain $J\subset I'$ and $g\in G$
  such that $g J\neq J$.  Then $J$ is a subdomain that is invariant
  under the finite index subgroup of $G$ acting trivially on $I'$, but
  not $G$-invariant, a contradiction.  If some finite index subgroup
  $G_0<G$ does not act irreducibly on some $I_i$, then there is a
  $G_0$-invariant subdomain $J\subsetneq I_i$ and $J$ is not
  $G$-invariant because $I_i$ is an irreducible component of $G$, a
  contradiction.  

Conversely, assume that the statement in the lemma holds
  and let $J\subset \cald$ be a subdomain invariant under a finite
  index subgroup $G_0<G$.  Since $G$ acts trivially on $I'$,
  $J\cap I'$ is $G$-invariant.  For each $i$, $J\cap I_i$ is
  $G_0$-invariant, and since by assumption $G_0$ acts irreducibly on
  $I_i$, we either get that $J\cap I_i$ is  empty or $J=I_i$, in
  particular, $J\cap I_i$ is $G$-invariant. Since this holds for each $i$, $J$ is $G$-invariant. 
\end{proof}

\begin{rem}\label{rem;rem1} 
If $\cald$ and $\cald'$ are two domains with an
interval exchange bijection $\tau$ from $\cald$ to $\cald'$, then a group
$G<\IET(\cald)$ is unfragmentable (resp. irreducible) if and only if its conjugate by $\tau$ in
$\IET(\cald')$ is unfragmentable (resp.  irreducible). 
\end{rem}

 The main theorem of this section is the following. 
We will use it only in the case of a cyclic group, but the general statement seems to be of interest.

\begin{thm}\label{thm;fi_unfragmentable}
If $G<\IET(\cald)$ is a  finitely generated subgroup of $\IET(\cald)$, it admits a
finite index subgroup that is unfragmentable. 
\end{thm}

Before proving the theorem, we  prove that one can find a finite symmetric generating set $S$ of $G$ such that 
for every $x\in \rond\cald$, $\Reg(x,S)=\Reg(x,G)$, and $E(S)=E(G)$.

\begin{lem}\label{lem_reg}
  Let $G$ be a finitely generated subgroup of $\IET(\cald)$.

Then there exists a finite symmetric generating set $S$ such that for all $x\in \rond\cald$,
$\Reg(x,S)=\Reg(x,G)$.
\end{lem}

\begin{proof}
If $x\in\rond \cald\setminus \Sing(G)$, then for all generating $S$ of
$G$,  $\Reg(x,S)=G x=\Reg(x,G)$.
Let $S$ be a finite symmetric generating set of $G$. 
We are going to increase $S$ so that the Lemma holds.
Fix a point $x\in \Sing(G)$.
Let $R_1,\dots,R_n$ be the partition of $\Reg(x,G)$ into $S$-regular orbits, and choose $x_i\in R_i$ for each $i\leq n$.
By definition of $\Reg(x,G)$, there exists $g_i\in G$ such that $g_i.x_1=x_i$ and $g_i$ is continuous at $x_1$.
Adding $\{g_2^{\pm1},\dots,g_n^{\pm 1}\}$ to $S$ yields a symmetric generating set $S'$ such that $\Reg(x,S')=\Reg(x,G)$.
Since $\Sing(G)$ is a union of finitely  many $G$-regular orbits,
one can repeat this operation finitely many times and get a generating set satisfying the lemma.
\end{proof}

Recall that $E(S)$ (resp. $E(G)$) is the set of points $x\in \Sing(G)$ whose
$S$-regular orbit $\Reg(x,S)$  (resp. whose $G$-regular orbit
$\Reg(x,G)$) is finite. The previous Lemma gives the following.

\begin{cor}\label{cor_Evide}
There exists a finite generating set $S$ of $G$ such that $E(G)=E(S)$.
\end{cor}

\begin{proof}[Proof of Theorem \ref{thm;fi_unfragmentable}]
  After cutting $\cald$ as in Remark \ref{rem;rem}, one can assume
that $E_S(G)=\es$.  Apply Imanishi theorem,
and write $\cald=\cald_{\fin}\cup\cald_\infty  $, with $\cald_{\infty}=I_1\cup \dots\cup I_r$ as in Corollary \ref{cor_Imanishi}.

Let $G_1$ be a finite index subgroup of
$G$ such that  $(G_1)_{|\cald_{\fin}}$ is trivial, and therefore
 the map $G\ra  (G_1)_{|\cald_{\infty}}$ is an isomorphism.  
Without loss of generality, and  to keep readable notations, we assume that $\cald=\cald_\infty$,
i.e.\ that every $G_1$-orbit is infinite.

Let $G_0$ be an arbitrary finite index subgroup of $G_1$.
The fact that $E_S(G)=\es$ means that for every $x\in \rond \cald_{\infty}$,
$\Reg(x,G)$ is infinite. We claim that $\Reg(x,G_0)$ is also infinite.
Indeed, let $g_i$ be a sequence of elements of $G$ that are continuous at $x$ and such that 
the points $g_i.x$ are all distinct. Since $[G:G_0]<\infty$, up to extracting a subsequence, 
we may assume that there exists an element $a\in G$ such that $ag_i\in
G_0$ for all $i$.
Since $\Disc(a)$ is finite, $a$ is continuous at $g_i.x$ for $i$ large
enough, so $ag_i.x\in \Reg(x,G_0)$. This proves our claim
and shows that $E(G_0)=\es$. 

By Corollary \ref{cor_Evide}, there exists a finite generating set $S_0$ of $G_0$
such that $E(S_0)=\es$.
Then Corollary \ref{cor_Imanishi}
yields a decomposition of $\cald$ into finitely many $G_0$-invariant subdomains $I_1,\dots,I_r$
on which the action of $G_0$ is irreducible, and since $E(S_0)=\es$, each $I_i$ is a union of connected components of $\cald$. 
The number $r$ depends on $G_0$ but is bounded by the number of connected components of $\cald$.

Among all possible choices of finite index subgroups $G_0<G_1$, we choose $G_0$ so that $r$ is maximal.
Then for any $G'_0<G_0$ of finite index, the decomposition of $\cald$
into $G'_0$-irreducible components  is
its decomposition into $G_0$-irreducible components.
This shows that $G_0$ is unfragmentable.
\end{proof}

\section{Commutation and solvable subgroups}

Let $\cald$ be a domain.
The main result of this section is the following.  

\begin{thm} \label{thm;tf_vA}
  Let $G<\IET(\cald)$ be a finitely generated torsion free solvable group. 
Then $G$ is virtually abelian.
\end{thm}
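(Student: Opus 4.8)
The plan is to reduce to the case where $G$ is irreducible, where Proposition~\ref{prop;io_in_A} applies almost at once, and to perform that reduction by peeling off, one at a time, the region supporting the last nontrivial term of the derived series.

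First I would set up the reduction and the induction. Since passing to a finite-index subgroup preserves being finitely generated, torsion-free and solvable, and since a group admitting a virtually abelian finite-index subgroup is itself virtually abelian, Theorem~\ref{thm;fi_stable} (together with the corollary to Imanishi's theorem, Proposition~\ref{prop_Imanishi}) lets me assume that $G$ is stable; by definition of stability $G$ then acts by the identity outside its irreducible components $I_1,\dots,I_r$. The induction will be on the measure of $\cald$ (equivalently, on the number of connected components), the base case being the situation $\Omega^c=\es$ below. When $G$ is already irreducible, $\cald$ is a single component, $G$ acts faithfully, and the last nontrivial term $A=G^{(m)}$ of the derived series is a nontrivial \emph{torsion-free} abelian normal subgroup; it therefore contains an element of infinite order, so Proposition~\ref{prop;io_in_A} gives directly that $G$ is virtually abelian. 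This is the heart of the matter, and everything else is bookkeeping designed to feed back into it.

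For the inductive step I would choose an element $a$ of infinite order in $A=G^{(m)}$ and, after replacing it by a power, assume by Theorem~\ref{thm;fi_stable} that $\grp{a}$ is stable; let $I_1,\dots,I_k$ be the irreducible components of $a$. Since $A$ is abelian and normal, every conjugate $gag\m$ lies in $A$ and hence commutes with $a$, so Corollary~\ref{cor;dsbdom} shows that the $G$-orbit of each $I_j$ is a finite family of pairwise disjoint subdomains; their union $\Omega$ is thus a $G$-invariant subdomain, nonempty because $a\neq 1$. After passing to a finite-index subgroup preserving each of the finitely many translates, every such translate carries an irreducible and stable conjugate of a power of $a$ lying in the normal abelian subgroup $A$, so Proposition~\ref{prop;irred_in_A} shows that the restriction of $G$ to each translate is virtually abelian; hence $G|_\Omega$, a subgroup of the finite product of these restrictions, is virtually abelian. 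On the complementary $G$-invariant subdomain $\cald\setminus\Omega$ the element $a$, and therefore its whole normal closure $\ngrp{a}\subset A$, acts trivially, and $\cald\setminus\Omega$ has strictly smaller measure, so the induction hypothesis is meant to govern the action there.

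The main obstacle is recombining the two pieces. Writing $\Omega^c=\cald\setminus\Omega$, one has $G\into G|_\Omega\times G|_{\Omega^c}$ with $G|_\Omega$ virtually abelian, so it would suffice to know that $G|_{\Omega^c}$ is virtually abelian; but $G|_{\Omega^c}$ is only a \emph{quotient} of $G$, and restricting a torsion-free group to a subdomain can create torsion, so it need not be torsion-free and the induction hypothesis does not apply to it directly. Concretely, the lamplighter embeddings of Proposition~\ref{prop_abelianLL} exhibit irreducible solvable subgroups of $\IET$ with torsion last derived term that are \emph{not} virtually abelian, which shows that torsion-freeness must be exploited for the ambient group and cannot be checked component by component. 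The way I would try to close the argument is to work instead with the torsion-free normal subgroup $H=\ker(G\to G|_\Omega)$, which acts faithfully on $\Omega^c$ and to which the induction hypothesis does apply, and then to prove that the extension $1\to H\to G\to G|_\Omega\to 1$ is virtually abelian by controlling how $H$ and $G|_\Omega$ interact through the common torsion-free abelian normal subgroup $A$. For a general extension of a virtually abelian group by a virtually abelian group this conclusion is false, so pinning down exactly why the presence of $A$ rules out the lamplighter-type behaviour is the delicate point on which the whole proof turns.
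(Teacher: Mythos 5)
Your proposal sets up the right machinery but stops exactly where the real difficulty begins, and you say so yourself: after producing the $G$-invariant subdomain $\Omega$ with $G|_\Omega$ virtually abelian, you need the extension $1\to H\to G\to G|_\Omega\to 1$ (with $H=\ker(G\to G|_\Omega)$) to be virtually abelian, and you acknowledge that this is false for general extensions of virtually abelian groups by virtually abelian groups, leaving "the delicate point on which the whole proof turns" unresolved. That point is not a removable technicality; it is the entire content of the theorem, so the proposal is not a proof. There are also two unacknowledged problems with your inductive frame. First, $H$ is normal but typically of infinite index in $G$, so it need not be finitely generated (compare the kernel of $(\bbZ/2\bbZ)\wr\bbZ\to\bbZ$), and your induction hypothesis requires finite generation. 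Second, the induction itself is not well-founded: one cannot induct on measure, and the number of connected components can \emph{increase} when passing from $\cald$ to $\Omega^c$, since the irreducible components of the stable element $a$ need not be unions of connected components of $\cald$ (stability does not provide this; Imanishi's theorem does only after a group-dependent cutting of the domain).

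The paper closes the argument by inducting on a quantity your proposal never uses: the derived length. It keeps $A=G^{(n)}$, the last nontrivial term of the derived series, and splits the irreducible components of $G$ (not of a single element $a$) into those where the image of $A$ contains an infinite-order element and those where it is torsion; Proposition \ref{prop;io_in_A} makes the first factor $G_1$ of the resulting embedding $\iota:G\into G_1\times G_2$ virtually abelian. The key step (Lemma \ref{lem;lem_on_A_de}) is that torsion-freeness of $A$ forces $p_1\circ\iota$ to be \emph{injective on $A$}, because $p_2\circ\iota(A)$ is torsion. Taking $H$ to be the preimage in $G$ of an abelian finite-index subgroup of $G_1$, one gets $p_1\circ\iota([H,H])=\{1\}$, hence $A\cap[H,H]=\{1\}$, hence $H^{(n)}\subset G^{(n)}\cap[H,H]=\{1\}$: the finite-index subgroup $H$ has strictly smaller derived length, and the induction closes. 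No extension problem ever arises, because the induction passes to a \emph{finite-index} subgroup rather than to a normal complement; and torsion-freeness is exploited not to preserve torsion-freeness of restrictions (which fails, as your lamplighter remark correctly shows) but to make a projection injective on $A$. This reuse of $A$ across the whole domain, rather than a domain-splitting recombination, is the idea your proposal is missing.
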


 Since virtually polycyclic groups are virtually torsion-free, we get

\begin{cor}\label{cor;VPC}
   Any virtually polycyclic subgroup of $\IET$ is virtually abelian.\qed
\end{cor}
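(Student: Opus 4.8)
The plan is to deduce this directly from Theorem \ref{thm;tf_vA} by a two-step reduction that removes, in turn, the passage to finite index and the torsion. Let $G<\IET$ be virtually polycyclic. By definition this means that $G$ contains a polycyclic subgroup $P$ of finite index, and since any subgroup of $G$ is again a subgroup of $\IET$, I may freely regard $P$ and its subgroups as subgroups of $\IET(\cald)$ for the relevant domain.

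First I would invoke the classical structural fact, already recorded in the sentence preceding the statement, that every polycyclic group is virtually torsion-free: $P$ contains a torsion-free subgroup $P_0$ of finite index (indeed a poly-(infinite cyclic), hence torsion-free, subgroup of finite index). By transitivity of finite index, $[G:P_0]=[G:P]\,[P:P_0]<\infty$, so $P_0$ is a finite-index subgroup of $G$.

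Next I would verify that $P_0$ meets the hypotheses of Theorem \ref{thm;tf_vA}. Being a subgroup of the polycyclic group $P$, the group $P_0$ is itself polycyclic, hence finitely generated and solvable; and it is torsion-free by construction. Since $P_0<G<\IET$, Theorem \ref{thm;tf_vA} applies and shows that $P_0$ is virtually abelian, \emph{i.e.}\ $P_0$ contains an abelian subgroup $A$ of finite index.

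Finally, transitivity of finite index once more gives $[G:A]=[G:P_0]\,[P_0:A]<\infty$, so $A$ is an abelian subgroup of finite index in $G$ and $G$ is virtually abelian. The only non-formal ingredient is the standard group-theoretic fact that polycyclic (hence virtually polycyclic) groups are virtually torsion-free; everything else is bookkeeping with finite indices, so I expect no genuine obstacle here beyond correctly citing that fact and applying Theorem \ref{thm;tf_vA}.
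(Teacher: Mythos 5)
Your proof is correct and is exactly the argument the paper intends: the corollary is deduced from Theorem \ref{thm;tf_vA} by passing to a finite-index torsion-free polycyclic (hence finitely generated solvable) subgroup, using that polycyclic groups are virtually torsion-free and that finite index is transitive. The paper compresses this into the single sentence preceding the corollary; you have merely written out the same bookkeeping in full.
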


The theorem will be proved in several steps.
We start with the following property of unfragmentable elements.

\begin{lem}\label{lem;if_a_unfragmentable}
Let $a\in \IET(\cald)$ be unfragmentable with irreducible components $\Irred(a)=\{I_1,\dots, I_r\}$. If $g\in \IET(\cald)$ is such that $gag^{-1}$ commutes with $a$,
then  for all $i$, either  $g(I_i)$ is  disjoint from $I_1,\dots, I_r$, 
or $g(I_i)$ is equal to some $I_j$.  
\end{lem}

\begin{proof}
Observe that $\Irred(gag^{-1}) =\{  g (I_1), \dots, g (I_r)\}$.
For readability we will write $a^g= gag^{-1}$. Since $a$
commutes with $a^g$, it permutes the collection $\{  g(I_1), \dots, g
(I_r)\}$, and therefore some power $a^{r!}$ preserves each   $g(I_j)$.
By unfragmentability, any subdomain preserved by $a^{r!}$ is preserved by $a$,
and therefore $a$ itself preserves each $g(I_j)$. 
Similarily, $a^g$ preserves each $I_j$.

The intersection   $g(I_j)\cap I_i$ is $a^g$-invariant, and also $a$ invariant. 
  If  $g(I_j)\cap I_i\neq\es$, then  
 $g(I_j)\cap I_i=I_i$ by  irreducibility of $a$ on $I_i$.
Similarly,  $g(I_j)\cap I_i=g(I_j)$ by irreducibility of $a^g$ on $g(I_j)$.
 It follows that if $g(I_j)\cap I_i\neq\es$ then $g(I_j)=I_i$.  The lemma follows.
\end{proof}

\begin{cor}\label{cor;dsbdom}  
 Let $a\in G<\IET(\cald)$ be a unfragmentable element with irreducible components $I_1,\dots,I_n$. 
Assume that for all $g\in G$, $gag\m$ commutes with $a$.

Then  
for each $i\leq n$,  $\{g(I_i), g\in G \}$ is a finite
collection of disjoint subdomains.  
\end{cor}

\begin{proof}
Let $g, h\in G$. Assume that $g(I_i)\cap h(I_i) \neq \es$. Then,
$h^{-1} g(I_i) \cap I_i
\neq \es$, and by  Lemma \ref{lem;if_a_unfragmentable},  this implies that $h^{-1} g(I_i) =I_i$,
hence $g(I_i)= h(I_i)$. Thus $\{g(I_i), g\in G \}$ is a 
collection of disjoint subdomains. It is finite because the measure of
$\cald$ is finite.
\end{proof}

\begin{prop} \label{prop;irred_in_A} Let $G$ be a subgroup of $\IET(\cald)$. 
  If there exists  a normal abelian subgroup  of $G$,  containing some
  irreducible and unfragmentable element,  
then $G$ is either abelian or virtually cyclic.
In particular, $G$ is virtually abelian.
\end{prop}

Before starting the proof, let us
recall the following, from \cite{DFG1}.  
 Let $g\in \IET(\cald)$  and $d(g)$ be the number of
discontinuity points of $g$ on $\cald$. Let $\|g \| = \lim \frac{1}{n}
d(g^n)$. In \cite[Coro. 2.5]{DFG1}, we proved that  $\|g \| =0$ if and
only if $g$ is 
conjugate to a continuous transformation of some
domain $\cald'$ consisting only of circles, and one of its powers is a
rotation on each circle.  

\begin{proof}
Let $a$ be such an element in $A \normal G$ (with $A$ abelian). 
If $\|a\|=0$, then as we mentionned in the preceeding discussion, $a$ is conjugate to a continuous transformation on some $\cald'$,
and because it is irreducible and unfragmentable, $\cald'$ has to be a circle and $a$ is an irrational rotation.
Then, by \cite[Lemma 1.1]{DFG1}, its centralizer is conjugate to the rotation group on $\cald'$,
and since $G$ normalises $A$,  
\cite[Lemma 1.1]{DFG1} says that $G$ must
also be     
 conjugate to a group of rotations on  $\cald'$, hence abelian.

Assume now that $\|a\|>0$.  Since $a$ is irreducible,  \cite[Prop. 1.5]{No} implies that
its centraliser is virtually cyclic, hence so is $A$.  
Therefore $A$ has a finite automorphism group. 
The group $G$ acts by conjugation on $A$, and since the automorphism group of $A$ is finite, the kernel of this action has finite index in $G$. 
This kernel is contained in the centralizer of $a$ though, so $G$ is
virtually cyclic. 
\end{proof}

\begin{prop}\label{prop;io_in_A} 
  If $G <\IET(\cald)$ is irreducible, and  contains a normal abelian
  subgroup  $A\normal G$,  with an element   $a\in A$ of infinite order,
then $G$ is virtually abelian.

In particular, if $G$ is finitely generated,  then so is $A$.
\end{prop}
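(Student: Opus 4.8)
The plan is to reduce to Proposition \ref{prop;irred_in_A} by producing an \emph{irreducible} and \emph{stable} element of a normal abelian group \emph{restricted to a single irreducible component}, and then to transfer the conclusion back to $G$. First I would arrange that $a$ is stable: since $a$ has infinite order, $\grp{a}\simeq\bbZ$, so Theorem \ref{thm;fi_stable} gives a finite index, hence cyclic, stable subgroup $\grp{a^k}$; replacing $a$ by $a^k$ (still in $A$, still of infinite order) I may assume $a$ is a stable element. As $a$ has infinite order it cannot act with only finite orbits (a power would then be the identity), so its set of irreducible components ${\rm Irred}(a)=\{I_1,\dots,I_r\}$ is nonempty. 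Because $A$ is abelian and normal, for every $g\in G$ the conjugate $gag\m$ lies in $A$, hence commutes with $a$; thus Corollary \ref{cor;dsbdom} applies and $\{g(I_1)\mid g\in G\}$ is a \emph{finite} family of pairwise disjoint subdomains. Its union is $G$-invariant and nonempty, so by irreducibility of $G$ it equals $\cald$: the domain is tiled by finitely many translates $T_1=I_1,T_2,\dots,T_N$ of $I_1$, and $G$ acts \emph{transitively} on this finite set of tiles (the tiles are exactly the $G$-orbit of $I_1$).

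Next I would descend to one tile. Let $G_1<G$ be the stabiliser of $T_1$ (index $N$), and set $\bar G=G_1|_{T_1}$. I claim $\bar G$ is irreducible: if $J\subsetneq T_1$ were invariant under $G_1|_{T_1}$, then $\bigcup_{g\in G}g(J)$ would be a $G$-invariant subdomain meeting $T_1$ in exactly $J$ (for $g\notin G_1$, $g(T_1)$ is another tile disjoint from $T_1$), contradicting irreducibility of $G$. Moreover $a(T_1)=T_1$ since $T_1$ is an $a$-component, so $a\in A\cap G_1$ and $\bar A=(A\cap G_1)|_{T_1}$ is a normal abelian subgroup of $\bar G$. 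The restriction $a|_{T_1}$ is irreducible by definition of irreducible component, and it is stable because any $(a|_{T_1})^m$-invariant subdomain of $T_1$ is $a^m$-invariant in $\cald$, hence $a$-invariant by stability of $a$. Proposition \ref{prop;irred_in_A} then shows that $\bar G$ is virtually abelian.

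The main obstacle is transferring virtual abelianness from the restriction $\bar G$ back to $G$, since the restriction map $G_1\to\bar G$ may have a large kernel (elements acting trivially on $T_1$ but not elsewhere). I would resolve this by passing to $G_0\normal G$, the kernel of the permutation action on the $N$ tiles, which has finite index. All tiles are $G_0$-invariant, so restriction gives an injection $G_0\hookrightarrow\prod_{j=1}^N G_0|_{T_j}$. For each $j$, choosing $g_j\in G$ with $g_j(T_1)=T_j$ and using $g_j\m G_0 g_j=G_0$ (normality) identifies $G_0|_{T_j}$ with $G_0|_{T_1}$, which is a subgroup of $\bar G$ and hence virtually abelian. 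A finite product of virtually abelian groups is virtually abelian, and so is any subgroup; therefore $G_0$, and then $G$, is virtually abelian.

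Finally, for the ``in particular'' statement, if $G$ is finitely generated and virtually abelian I would pick a finite index abelian subgroup $B<G$; it is finitely generated, so $A\cap B$ is a finitely generated abelian group. Since $[A:A\cap B]\leq[G:B]<\infty$, the group $A$ has a finitely generated finite index subgroup, and is therefore itself finitely generated.
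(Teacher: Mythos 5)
Your proposal is correct and takes essentially the same route as the paper: replace $a$ by a stable power via Theorem \ref{thm;fi_stable}, use Corollary \ref{cor;dsbdom} plus irreducibility of $G$ to tile $\cald$ by finitely many disjoint translates of an irreducible component of $a$, apply Proposition \ref{prop;irred_in_A} on a tile, and conclude by injecting a finite-index tile-preserving subgroup of $G$ into the product of its restrictions to the tiles. The differences are cosmetic only: the paper invokes Proposition \ref{prop;irred_in_A} on each tile $M_i$ directly using the conjugates $(a^k)^{g_i}$ instead of your conjugation isomorphisms $G_0|_{T_j}\simeq G_0|_{T_1}$, your verification that $\bar G$ is irreducible is not actually needed (that proposition does not require it), and you spell out the ``in particular'' clause that the paper leaves implicit.
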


\begin{proof}
 Up to replacing $a$ by a power, Theorem \ref{thm;fi_unfragmentable} allows us to assume that $a$ is unfragmentable.
  Let $I_1,\dots I_k$ be its irreducible components.

By Corollary \ref{cor;dsbdom},  $\{g (I_1), g\in G\}$ is a finite
collection of disjoint subdomains.  
 By irreducibility of $G$,
it is a partition of $\cald$, and we thus write $\cald=M_1\dunion\dots\dunion M_l$ with $M_1=I_1$, and $M_i=g_i I_1$.
There is a finite index subgroup $G_0$ of $G$ that preserves each $M_i$.
Let $k$ be such that $a^k\in G_0$. Then, $a^k$ is irreducible and
unfragmentable on $I_1$
so Proposition \ref{prop;irred_in_A} ensures that the image 
${G_0}_{|I_1}\subset \IET(I_1)$ of $G_0$ under the restriction map is virtually abelian. 
Similarly, $(a^k)^{g_i}$ is irreducible on $M_i$, so $G_0$ has virtually
abelian image in restriction to each $M_i$. 

 Since $\cald = M_1\dunion\dots\dunion M_l$, the product of restriction maps yields an injection
of $G_0$ into $\prod_j \IET (M_j)$. 
Thus, the image of $G_0$ is contained in a finite product of virtually abelian groups,
so $G_0$ is virtually abelian and so is $G$. 
\end{proof}

We finish with this lemma before proving Theorem \ref{thm;tf_vA}.

\begin{lem}\label{lem;lem_on_A_de}
 Let $G<\IET(\cald)$ be a finitely generated group, and assume  that  it
 contains a torsion-free  abelian normal subgroup $A$. Then $A$ is
 finitely generated, and there is  a finite
 index subgroup $H$ in $G$, such that $A \cap [H,H] = \{1\}$. 
\end{lem}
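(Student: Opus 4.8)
The plan is to combine the irreducible decomposition of $G$ with Proposition \ref{prop;io_in_A}, using the torsion-freeness of $A$ to discard every component on which $A$ acts as a torsion group. First I would invoke the corollary to Imanishi's theorem (Proposition \ref{prop_Imanishi}) to write $\mathrm{Irred}(G)=\{I_1,\dots,I_r\}$, with complement $F=\cald\setminus\bigcup_i I_i$ on which $G$ acts as a finite group. Since $\cald$ is the disjoint union of the $G$-invariant subdomains $I_1,\dots,I_r,F$, the restriction maps assemble into an injection $G\hookrightarrow \prod_{i=1}^r \IET(I_i)\times \IET(F)$; write $p_i\colon G\to\IET(I_i)$ and $p_F\colon G\to\IET(F)$ for the coordinate homomorphisms. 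By construction each $p_i(G)$ is irreducible on $I_i$ and $p_F(G)$ is finite, while each $p_i(A)$ is a normal abelian subgroup of $p_i(G)$ and $p_F(A)$ is a finite abelian group.

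Next I would split the components according to whether $A$ acts with infinite order. Set $J=\{i : p_i(A)\text{ contains an element of infinite order}\}$. For each $i\in J$, the irreducible group $p_i(G)$ contains the normal abelian subgroup $p_i(A)$ together with an element of infinite order, so Proposition \ref{prop;io_in_A} applies and shows that $p_i(G)$ is virtually abelian; moreover, since $p_i(G)$ is finitely generated (being a quotient of $G$), the ``in particular'' clause of that proposition gives that $p_i(A)$ is finitely generated. For $i\notin J$ the abelian group $p_i(A)$ is a torsion group, and $p_F(A)$ is finite. Now consider $N=\{a\in A : p_i(a)=1\ \text{for all } i\in J\}$. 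By faithfulness of $A\hookrightarrow\prod_i\IET(I_i)\times\IET(F)$, the subgroup $N$ embeds into $\prod_{i\notin J}p_i(A)\times p_F(A)$, which is a torsion group; since $A$ is torsion-free, $N=\{1\}$. Hence the coordinate projection yields an injection $A\hookrightarrow\prod_{i\in J}p_i(A)$. As the right-hand side is a finitely generated abelian group and subgroups of such groups are again finitely generated, $A$ is finitely generated, which proves the first assertion.

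For the second assertion I would reuse this same injection to confine $A\cap[H,H]$ to the $J$-components and then kill the commutator there by passing to finite index. For each $i\in J$ choose a finite-index abelian subgroup $K_i\le p_i(G)$ (possible since $p_i(G)$ is virtually abelian) and set $H=\bigcap_{i\in J}p_i^{-1}(K_i)$, a finite-index subgroup of $G$. Then $p_i(H)\le K_i$ is abelian for every $i\in J$, so $p_i([H,H])=[p_i(H),p_i(H)]=\{1\}$; in other words $[H,H]$ acts trivially on each $I_i$ with $i\in J$. Consequently every $x\in A\cap[H,H]$ satisfies $p_i(x)=1$ for all $i\in J$, and the injection $A\hookrightarrow\prod_{i\in J}p_i(A)$ forces $x=1$. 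Thus $A\cap[H,H]=\{1\}$, as required.

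I expect the main point to be conceptual rather than computational: the reduction step in which torsion-freeness of $A$ lets one ignore entirely the components carrying a torsion action, so that the whole problem is governed by the finitely many components on which $A$ acts with infinite order, where Proposition \ref{prop;io_in_A} (and hence, ultimately, Novak's theorem on centralizers together with the description of the centralizer of an irrational rotation) supplies virtual abelianness. The only care needed is to arrange $p_i(H)$ to be abelian simultaneously for all $i\in J$, which is harmless because $J$ is finite.
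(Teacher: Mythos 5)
Your proof is correct and follows essentially the same route as the paper: both decompose $\cald$ via Imanishi into irreducible components plus a finite part, apply Proposition \ref{prop;io_in_A} on the components where the image of $A$ has an infinite-order element, use torsion-freeness of $A$ to see that the projection to those components is injective on $A$ (giving finite generation), and then pass to a finite-index subgroup $H$ whose image on those components is abelian so that $[H,H]$ meets $A$ trivially. The only cosmetic difference is that the paper bundles the relevant components into a single factor $G_1$ and takes the preimage of one abelian finite-index subgroup $H_1 \le G_1$, whereas you intersect the preimages $p_i^{-1}(K_i)$ component by component.
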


\begin{proof}
Let $\Irred(G) = \{I_1, \dots, I_k, I_{k+1}\dots, I_r\}$ where
one has ordered the components so that 
the image $A_{|I_i}$ of $A$  in $\IET(I_i)$ under the restriction map is a torsion group for all $i >k$ and contains an
infinite order element for $i\leq k$. Let $\cald_\fin=\cald \setminus
(\bigcup_{i=1}^r I_i)$. 

For every $i\leq k$,  by Proposition \ref{prop;io_in_A} we get that
the image $G_{|I_i}$ of $G$ in $\IET(I_i)$ under the restriction map is a virtually abelian group.

  In other components, the image of $A$ is a torsion group. Let us
  denote by $G_1$ the image of $G$ in $\IET(\bigcup_{i=1}^k I_i)$ and
  by $G_2$ the image of $G$ in $\IET(\left(\bigcup_{j=k+1}^r I_j\right) \cup \cald_\fin)$ under the restriction maps.

This gives an embedding $\iota : G\into G_1\times G_2$, where $G_1$ is virtually abelian,
and $p_2\circ \iota (A)$ is torsion. In particular, because $A$ is torsion-free, $p_1\circ \iota$ is
injective in restriction to $A$. 

 It already follows that $A$ is
finitely generated, since it embeds as a subgroup of a  finitely generated virtually
abelian group.

Consider $H_1$ an abelian finite index subgroup in $G_1$, and $H$  the
preimage of $H_1$ in $G$, which is a finite index subgroup.   

We saw that the map $p_1\circ \iota$ is injective on $A$, but it vanishes on
$[H,H]$ because $p_1\circ \iota (H) = H_1$ is abelian. Therefore $A
\cap [H,H]=\{1\}$, thus establishing the lemma.
\end{proof}

\begin{proof}[Proof of Theorem \ref{thm;tf_vA}]
Consider $G<\IET(\cald)$  a finitely
generated torsion free solvable group.  
 Then its
derived series has a largest index $n$ for which  $G^{(n)}\neq \{1\}$
(where
$G^{(n)} =[ G^{(n-1)},
G^{(n-1)}]$ and $G^{(0)}=G$). 
If $n=1$, $G$ is abelian, we thus proceed by induction on $n$.

 Since $G$  is torsion
free,  $G^{(n)}$  is torsion free, infinite, and it is also  abelian
and normal in $G$. We may then apply Lemma
\ref{lem;lem_on_A_de} to $A= G^{(n)}$, to find that there is a finite
index subgroup $H$ of $G$ such that $[H,H]\cap G^{(n)}=\{1\}$. However,
$H^{(n)} \subset G^{(n)}$, and it
follows that $H^{(n)}$ is trivial. The induction hypothesis implies
that $H$ is virtually abelian, hence so is $G$.  
\end{proof}

\section{Lamps and lighters}

\subsection{A lamplighter group in $\IET$}

\begin{figure}[ht]
\begin{center}
\includegraphics[scale=.4]{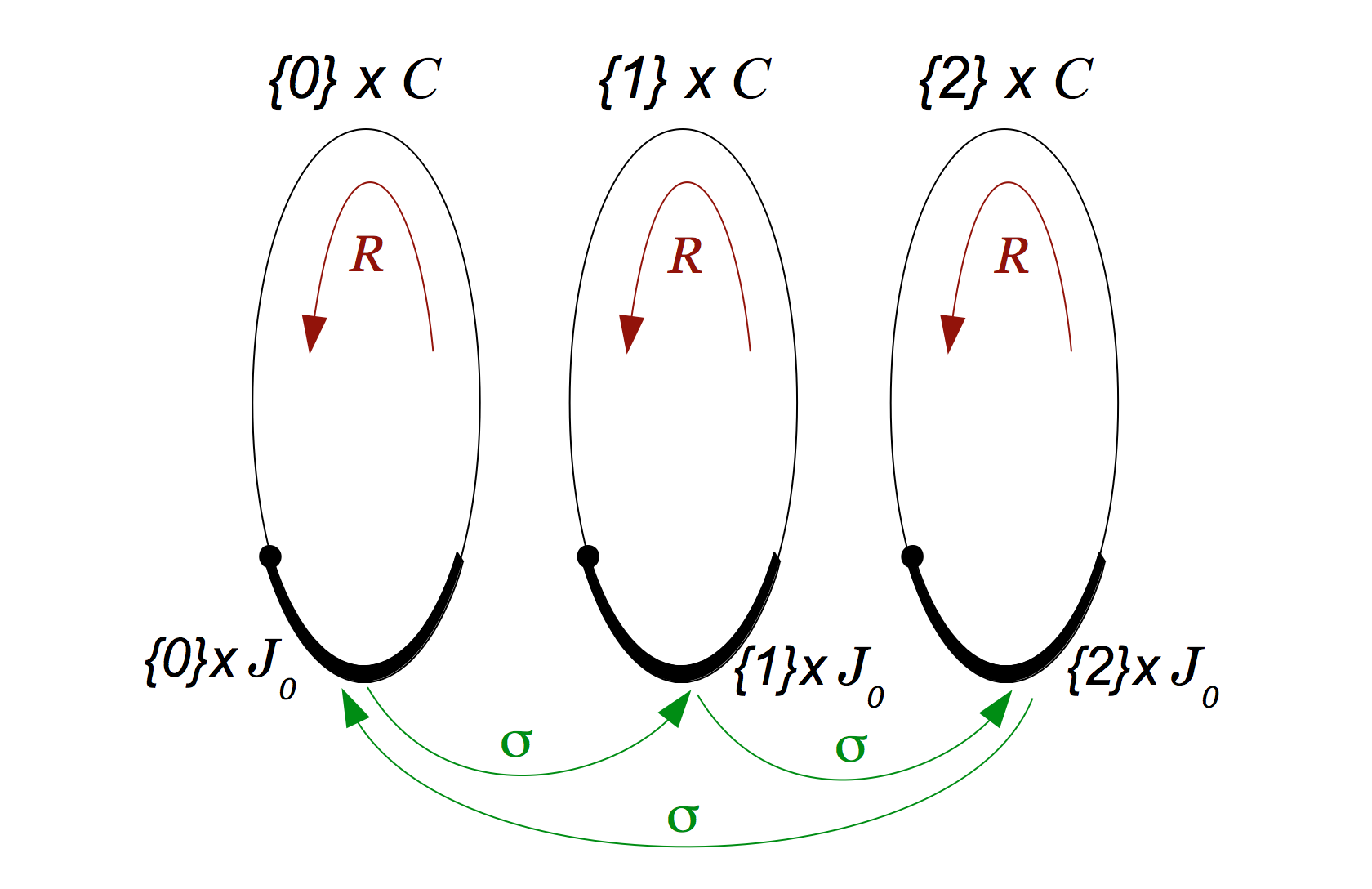}
\caption{A Lamplighter group $(\bbZ/3\bbZ)\wr \bbZ$ in $\IET$. 
The three circles $\{i\}\times \calc$ for $i\in \bbZ/3\bbZ$ are
visible. 
The transformation $R$ rotates each circle by the irrational angle
$\theta$. 
The support of the transformation $\sigma=\sigma_{1,J_0}$ is the union of the bold arcs.
}
\label{fig;1}
\end{center}
\end{figure}

\begin{prop}\label{prop_abelianLL}
  For all finite abelian group $A$,  the
group $A\wr\bbZ$ embeds in $\IET$.
\end{prop}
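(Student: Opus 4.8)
The plan is to realize $A\wr\bbZ=\left(\bigoplus_{n\in\bbZ}A\right)\isemidirect\bbZ$ concretely on the domain $\cald=A\times(\bbR/\bbZ)$, a disjoint union of $|A|$ circles indexed by $A$, following the model described in the introduction for $A=\bbZ/n\bbZ$. Fix an irrational angle $\theta$ and let $R\in\IET(\cald)$ be the synchronized rotation $(a,x)\mapsto(a,x+\theta)$; this will play the role of the generator of $\bbZ$. For the base group, fix the interval $I=[0,1/2)\subset\bbR/\bbZ$, and for each $b\in A$ let $\sigma_b\in\IET(\cald)$ be the identity outside $A\times I$ and the map $(a,x)\mapsto(a+b,x)$ on $A\times I$. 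Since $A$ is abelian, $b\mapsto\sigma_b$ is a homomorphism $A\to\IET(\cald)$, giving a copy of $A$ supported on $A\times I$ (the lamp at position $0$).

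First I would compute the conjugates $\sigma_b^{(n)}:=R^n\sigma_b R^{-n}$ and check that $\sigma_b^{(n)}$ is the identity outside $A\times(I+n\theta)$ and acts by $(a,x)\mapsto(a+b,x)$ there, so it is the analogous lamp supported on the translated interval $I+n\theta$. Next I would verify that any two such lamps $\sigma_b^{(n)}$ and $\sigma_c^{(m)}$ commute: on the overlap $(I+n\theta)\cap(I+m\theta)$ both act by translating the $A$-coordinate, and translations of the abelian group $A$ commute, while off the overlap their nontrivial supports are disjoint. Consequently the subgroup $N$ generated by all the $\sigma_b^{(n)}$ is abelian, and $R$ conjugates $\sigma_b^{(n)}$ to $\sigma_b^{(n+1)}$, i.e. $R$ normalizes $N$ and acts on it by the shift.

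The crux of the argument, and the step I expect to be the main obstacle, is to show that the natural surjection $\bigoplus_{n\in\bbZ}A\onto N$ is injective, despite the fact that the intervals $I+n\theta$ overlap densely. Given a finitely supported family $(b_n)_n$, commutativity shows that the product $\prod_n\sigma_{b_n}^{(n)}$ sends $(a,x)$ to $(a+\sum_{n\,:\,x\in I+n\theta}b_n,\,x)$, so it is trivial if and only if the $A$-valued step function $x\mapsto\sum_{n\,:\,x\in I+n\theta}b_n$ vanishes identically. The discontinuities of this step function occur exactly at the endpoints $n\theta$ and $n\theta+1/2$ (mod $1$), and I would argue that, because $\theta$ is irrational, one has $k\theta\neq 1/2\pmod 1$ for all $k\in\bbZ$, so no left endpoint $n\theta$ coincides with a right endpoint $m\theta+1/2$. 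Hence the jump of the step function at $x=n\theta$ equals exactly $b_n$ (using that $I$ is closed on the left), and vanishing of the function forces every $b_n=0$. This yields $N\cong\bigoplus_{n\in\bbZ}A$.

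Finally I would assemble the semidirect product. We have $\grp{R}\cong\bbZ$ since $R$ is an irrational rotation, and $\grp{R}\cap N=\{1\}$ because every nontrivial element of $N$ fixes the $x$-coordinate whereas $R^n$ moves it for $n\neq 0$. Combined with the fact that $R$ acts on $N\cong\bigoplus_{n\in\bbZ}A$ by the shift, this identifies $\grp{\sigma_b\,(b\in A),\,R}=N\isemidirect\grp{R}$ with $A\wr\bbZ$, giving the desired embedding into $\IET(\cald)\simeq\IET$.
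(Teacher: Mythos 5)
Your proposal is correct and follows essentially the same route as the paper: the same domain $A\times(\bbR/\bbZ)$, the same lamps supported on translates of $[0,1/2)$ by the synchronized irrational rotation, and the same semidirect-product assembly. Your injectivity step is also the paper's argument in different clothing: the observation that the jump of the step function $x\mapsto\sum_{n:\,x\in I+n\theta}b_n$ at the left endpoint $n\theta$ is exactly $b_n$ rests on the pairwise distinctness of the $2n$ interval endpoints (forced by irrationality of $\theta$), which is precisely what the paper uses when it notes that the index set $\calj_x$ changes by exactly one element as $x$ crosses an endpoint.
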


 Recall that in general, the group $A\wr G$ is the group 
$(\oplus_{i\in G} A)\isemidirect G$,
where $G$ acts by shifting coordinates: if $g\in G$,
and $(a_i)_{i\in G} \in (\oplus_{i\in G} A) $ is an almost null sequence, 
$g.(a_i)_{i\in G}=(a_{g\m i})_{i\in G}$.

 We will actually describe an embedding in $\IET(\cald)$ for a certain
 domain $\cald$. However, $\IET$ and $\IET(\cald)$ are isomorphic, and
 the choice of $\cald$ is only for convenience.

The construction is
illustrated in Figure \ref{fig;1}.

\begin{proof}
  Consider the domain $\cald=A\times \calc$ where $\calc=\bbR/\bbZ$.
Given $a\in A$ and $J\subset \calc$ a subinterval, let 
$\sigma_{a,J}$ 
be the element of $\IET(\cald)$ defined for all $(a',x)\in A\times \calc$ by
$$\sigma_{a,J}.(a',x)=
\begin{cases}
(aa',x)&\text{ if $x\in J$}\\
(a',x)& \text{ if $x\notin J.$}
\end{cases}
$$
Note that the support of $\sigma_{a,J}$ is $A\times J$ when $a\neq 1$.
We define $\cala_{J}$ as the subgroup of $\IET(\cald)$
consisting of the elements $\sigma_{a,J}$ for $a\in A$ (note that $\cala_{J}$ is isomorphic to $A$
as long as $J$ is non-empty).

Fix $J_0=[0,1/2[\subset \calc$, and let $\cala=\cala_{J_0}$.
Let $\theta\in \bbR\setminus \bbQ$, and let $R\in \IET(\cald)$ be the rotation by $\theta$
on each circle: $R(a,x)=(a,x+\theta)$.
We claim that the group generated by $R$ and $\cala_J$ is isomorphic to $A\wr\bbZ$.

First, one easily checks that for all  $J, J'\subset \calc$, 
any element of $\cala_J$ commutes with any element of $\cala_{J'}$
(this is because $A$ is abelian).
 Denote by $t$ a generator of the factor $\bbZ$ in $A\wr \bbZ$.
Since $R^k\cala R^{-k}=\cala_{R^k J_0}$, 
there is a homomorphism $\phi:A\wr\bbZ\ra \grp{R,\cala}$
sending  $t$ to $R$ and sending the almost null sequence $(a_i)_{i\in \bbZ}$
to $\prod_{i\in\bbZ} \sigma_{a_i,R^i J_0}$.

To prove that $\phi$ is injective, consider an element  $g=((a_i)_{i\in\bbZ},t^k)$ of its kernel.
Since $\phi(g)$ sends $(a,x)$ to some $(a',x+k\theta)$, we get that $k=0$.
This means that $\phi(g)$ is a commuting product $\phi(g)=\sigma_{a_1,J_1}\dots \sigma_{a_n,J_n}$ 
where $J_1,\dots J_n$ are distinct translates of $J_0$ by a multiple of $\theta$.
We can assume that $n>0$ and that no $a_i$ is trivial.
For all $x\in \calc$, consider $\calj_x\subset\{1,\dots,n\}$ the set of indices $j\in \{1,\dots,n\}$ 
such that $x\in J_j$.
Note that when $x$ varies, $\calj_x$ changes only when $x$ crosses
 an endpoint of some $J_i$, and that $\calj_x$ then changes by exactly one element
(this is because the $2n$ endpoints of the $J_i$'s are distinct since $\theta$ is irrational).
Thus, there exist  $x,x'\in \calc$ and $i_0\in\{1,\dots n\}$
such that $\calj_{x'}=\calj_x\cup\{J_{i_0}\}$. 
Now, for all $a\in A$, $\phi(g) (a,x)=(a \prod_{j\in \calj_x} a_j,x)$,
and since $\phi(g)$ is the identity,
$\prod_{j\in \calj_x} a_j=1$.
Similarly, $\prod_{j\in \calj_{x'}} a_j=1$, so $a_{i_0}=1$, a contradiction.
\end{proof}

The argument above  immediately generalizes to $A\wr \bbZ^d$, by replacing the
rotation of angle $\theta$ by $d$ rotations of rationally independant angles.  We thus get:

\begin{prop}\label{prop;LLk}
  For all $d\geq 1$, and all finite abelian group $A$, $A\wr \bbZ^d$ embeds in $\IET$.
\end{prop}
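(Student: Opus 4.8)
The plan is to show that the construction from Proposition~\ref{prop_abelianLL} generalizes verbatim once we replace the single irrational rotation $R$ by $d$ rotations acting on the same family of circles. Concretely, I would again take the domain $\cald = A\times\calc$ with $\calc = \bbR/\bbZ$, fix $J_0 = [0,1/2[\subset\calc$, and keep the same transformations $\sigma_{a,J}$ and the subgroup $\cala = \cala_{J_0}$. Now choose $d$ angles $\theta_1,\dots,\theta_d\in\bbR$ that are \emph{rationally independent} together with $1$, i.e.\ $1,\theta_1,\dots,\theta_d$ are linearly independent over $\bbQ$, and let $R_1,\dots,R_d$ be the corresponding synchronized rotations $R_m(a,x) = (a,x+\theta_m)$. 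These commute and generate a copy of $\bbZ^d$ acting on $\cald$, indexed by $\mathbf{k}=(k_1,\dots,k_d)\in\bbZ^d$ via $R^{\mathbf{k}} = R_1^{k_1}\cdots R_d^{k_d}$, which acts as rotation by $\mathbf{k}\cdot\theta = \sum_m k_m\theta_m$.

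The homomorphism $\phi\colon A\wr\bbZ^d\to\grp{R_1,\dots,R_d,\cala}$ is built exactly as before: the free abelian generators $t_1,\dots,t_d$ of the acting $\bbZ^d$ map to $R_1,\dots,R_d$, and an almost-null sequence $(a_{\mathbf{i}})_{\mathbf{i}\in\bbZ^d}$ maps to $\prod_{\mathbf{i}}\sigma_{a_{\mathbf{i}},\,R^{\mathbf{i}}J_0}$. Since $R^{\mathbf{k}}\cala R^{-\mathbf{k}} = \cala_{R^{\mathbf{k}}J_0}$ and all the $\sigma$'s commute (because $A$ is abelian), this is well-defined. For surjectivity onto $\grp{R_1,\dots,R_d,\cala}$ there is nothing to check beyond the definition. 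For injectivity I would take $g = ((a_{\mathbf{i}}),\mathbf{k})$ in the kernel; since $\phi(g)$ shifts the $\calc$-coordinate by $\mathbf{k}\cdot\theta$ and fixes $\cald$ pointwise, rational independence of $1,\theta_1,\dots,\theta_d$ forces $\mathbf{k}=\mathbf{0}$. Then $\phi(g)$ is a commuting product $\sigma_{a_1,J_1}\cdots\sigma_{a_n,J_n}$ where the $J_j$ are distinct translates of $J_0$ by integer combinations $\mathbf{i}_j\cdot\theta$, and I run the identical endpoint-crossing argument.

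The one point that requires a moment's care — and the only place the argument genuinely differs from the $d=1$ case — is the claim that the $2n$ endpoints of the intervals $J_1,\dots,J_n$ are all distinct, so that as $x$ traverses the circle, the index set $\calj_x$ changes by exactly one element at each endpoint crossing. In the rank-one case this followed from irrationality of $\theta$; here I need that the $2n$ values, namely the translates of $0$ and of $1/2$ by the distinct shifts $\mathbf{i}_1\cdot\theta,\dots,\mathbf{i}_n\cdot\theta$ modulo $1$, are pairwise distinct in $\calc$. Two left-endpoints coincide iff $\mathbf{i}_j\cdot\theta \equiv \mathbf{i}_{j'}\cdot\theta \pmod 1$, i.e.\ $(\mathbf{i}_j-\mathbf{i}_{j'})\cdot\theta\in\bbZ$, which by rational independence of $1,\theta_1,\dots,\theta_d$ forces $\mathbf{i}_j=\mathbf{i}_{j'}$, hence $j=j'$; the same argument handles two right-endpoints, and a left-endpoint never meets a right-endpoint because that would require $(\mathbf{i}_j-\mathbf{i}_{j'})\cdot\theta \equiv 1/2 \pmod 1$, again impossible. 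So I expect this disjointness-of-endpoints verification to be the main (though very mild) obstacle: it is exactly the step where the strengthened hypothesis of rational independence of the $\theta_m$ is consumed. Once it is in place, the remainder of the injectivity argument — picking $x,x'$ with $\calj_{x'}=\calj_x\cup\{i_0\}$ and deducing $a_{i_0}=1$ — transfers word for word, and combining this embedding with Proposition~\ref{prop_ext} (or simply noting $A\wr\bbZ^d$ itself is directly embedded) completes the proof.
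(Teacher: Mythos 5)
Your proposal is correct and is exactly the paper's argument: the paper proves Proposition~\ref{prop;LLk} by the one-line remark that the proof of Proposition~\ref{prop_abelianLL} generalizes upon replacing the single rotation by $d$ rotations with rationally independent angles, and your write-up simply fills in the details of that generalization. You correctly identify the only point where the hypothesis is consumed --- the pairwise distinctness of the $2n$ endpoints, needed so that $\calj_x$ changes by one element at each crossing --- and your verification of it via the $\bbQ$-linear independence of $1,\theta_1,\dots,\theta_d$ is sound.
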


\begin{rem}    
  Given $G<\IET$, and $A$ a finite abelian group, 
we don't know when the group $A\wr G$ embeds in
$\IET$.  
\end{rem}

\subsection{Lamps must commute}

We now put restrictions on which wreath products may embed in $\IET$.
Note that if a group $A$ contains an infinite order element, then $A\wr \bbZ$ contains
the torsion-free solvable group $\bbZ\wr\bbZ$ which does not embed in $\IET$ by Theorem \ref{thm;tf_vA}.

\begin{thm}\label{thm_LLNA}
  Let $L=F\wr \bbZ$ with $F$ finite non-abelian.
Then $L$ does not embed in $\IET$.
\end{thm}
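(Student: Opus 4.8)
The plan is to argue by contradiction. Assume $L=F\wr\bbZ$ embeds in $\IET(\cald)$, write $t$ for the image of a generator of the $\bbZ$-factor and $F_0$ for the base copy of $F$, and set $F_n=t^nF_0t^{-n}$. Two structural facts drive everything. First, since the base group is the \emph{direct sum} $\bigoplus_{n}F$, distinct copies commute, $[F_m,F_n]=1$ for $m\neq n$, while each $F_n\cong F$ is non-abelian. Second, every element of $\IET(\cald)$ preserves Lebesgue measure $\mu$, so $t$ is a measure-preserving transformation of $(\cald,\mu)$, and the support $U_n$ of $F_n$ is the translate $t^nU_0$ of the support $U_0$ of $F_0$; in particular all $U_n$ have the same positive measure $\mu(U_0)>0$. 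The goal is to exhibit a point whose $L$-orbit grows exponentially in word length, contradicting the polynomial orbit-growth bound of \cite[2.6]{DFG1}.

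The role of Birkhoff's pointwise ergodic theorem is to produce, for a fixed starting point, linearly many \emph{active} copies. Applying it to $t$ and the indicator $\mathbf{1}_{U_0}$, and using $x\in U_n\iff t^{-n}x\in U_0$, the averages $\frac1N\sum_{n=0}^{N-1}\mathbf{1}_{U_0}(t^{-n}x)$ converge to a limit of integral $\mu(U_0)>0$. Hence on a set of positive measure the limit is positive, and for any such $x$ there is a constant $c>0$ so that for all large $N$ at least $cN$ indices $n\in\{0,\dots,N-1\}$ are active, i.e. $x\in U_n$, so that $F_n$ moves $x$. Fix such an $x$ and let $O$ be the orbit of $x$ under the group generated by the active copies. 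Since placing lamps in positions $0,\dots,N-1$ is achieved by a word of length $O(N)$, every point of $O$ is reachable from $x$ by a group element of length $O(N)$; it therefore suffices to prove that $|O|$ grows exponentially in the number $m\approx cN$ of active copies.

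Here commutation and non-abelianness are combined through a rigidity property of orbits. Restricting the active copies to the invariant set $O$ gives pairwise-commuting subgroups $\bar F_n\le\mathrm{Sym}(O)$, each a non-trivial image of the non-abelian group $F$ and each moving $x$. Building $O$ by adjoining one active copy at a time, one gets $|O|=\prod_{i=1}^{m} r_i$, where $r_i\ge 1$ is the factor by which the orbit grows at the $i$-th step: since each new copy commutes with those already used, it permutes their orbits, so $r_i$ is the index of a subgroup of $\bar F$, and $r_i\ge 2$ exactly when the new copy sends $x$ outside the current orbit. The engine forcing $r_i\ge 2$ often is the classical fact that the centralizer in $\mathrm{Sym}(O')$ of a transitive subgroup is \emph{semiregular}, hence of order at most $|O'|$: if the orbit failed to grow over a long run of steps, all the corresponding non-abelian copies would restrict into one such centralizer of bounded size, which a non-abelian group cannot do unless the orbit is already large. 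Turning this into a geometric lower bound $|O|\ge\lambda^{m}$ with $\lambda=\lambda(F)>1$ gives $|O|\ge\lambda^{cN}$, contradicting \cite[2.6]{DFG1}.

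The main obstacle is precisely this last quantitative step, and it is essential that $F$ be non-abelian: for abelian $A$ the group $A\wr\bbZ$ does embed (Proposition \ref{prop_abelianLL}), and there the analogous orbits are only polynomial because the lamp contributions collapse into a single product, so the argument must genuinely exploit that the centre of $F$ is a proper subgroup to prevent many commuting copies from sharing one small orbit. The delicate point is to rule out \emph{coincidences}, in which many active copies act identically on the current orbit while differing only on its complement, keeping $|O|$ small; controlling these — presumably by using the equidistribution furnished by Birkhoff's theorem to separate the local pictures of $F_n$ at different positions $n$ — is the crux, after which the contradiction with polynomial orbit growth is immediate.
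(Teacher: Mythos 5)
Your global strategy coincides with the paper's: assume an embedding, use Birkhoff's theorem to find a point that meets a suitable positive-measure set linearly often along the $t$-orbit, and contradict the polynomial orbit-growth bound of \cite[Lemma 6.2]{DFG1}. But the step you yourself flag as ``the crux'' is a genuine gap, and it is exactly the hard part of the theorem. Your notion of \emph{active} index ($x\in\mathrm{supp}(F_n)$) is too weak to force orbit growth: the restriction $\bar F_n$ of $F_n$ to the orbit $O$ is a quotient of $F$ by some kernel, and this quotient may perfectly well be abelian even though $F$ is not. In that case nothing prevents many active copies from having equal, or pairwise commuting, abelian images inside a small $\mathrm{Sym}(O)$ --- this is precisely what happens in the abelian lamplighter embedding of Proposition \ref{prop_abelianLL}, where every lamp copy moves the point yet the lamp orbit of a point stays of size at most $|A|$. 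Your appeal to semiregularity of the centralizer of a transitive subgroup does not rule this out, because the non-abelianness of $F$ need not survive the restriction to $O$; so the inequality $|O|\ge\lambda^{m}$ is never established, and the contradiction does not materialize. (Your bookkeeping $|O|=\prod_i r_i$ with each $r_i$ the index of a subgroup of $F$ is correct, but nothing forces more than boundedly many $r_i$ to exceed $1$.)

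The paper supplies exactly the missing mechanism, in two stages, and the key difference is \emph{which} positive-measure set Birkhoff is applied to. First (Proposition \ref{prop_normal}), Birkhoff is applied to the set of $x$ where $\Stab_{F_0}(x)$ is \emph{not normal} in $F_0$: by the normalizer lemma (Lemma \ref{lem;algeb}), if $k$ translated copies have non-normal point stabilizer at $x$, the orbit of $x$ already has size $\ge 2^{k}$, so polynomial growth forces this set to be empty; hence all point stabilizers are normal, and the same holds for the products $F_0\times\cdots\times F_{k-1}$ by applying the statement to the subgroup $\grp{t^k, F_0\times\cdots\times F_{k-1}}\simeq (F_0\times\cdots\times F_{k-1})\wr\bbZ$. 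Second, one considers the set $E$ of $x$ where the quotient $F_0/\Stab_{F_0}(x)$ is \emph{non-abelian}: if $E$ were empty, every commutator of $F_0$ would fix every point, contradicting faithfulness, so $E$ has positive measure; then Birkhoff together with a second algebraic lemma (Lemma \ref{lem;algeb2}: when stabilizers are normal, every element of $\Stab(x)$ in a product $F_{i_1}\times\cdots\times F_{i_k}$ is central in each coordinate quotient $F_{i_j}/S_{i_j}$) yields $\#F.x\ge 2^{k}$ with $k$ linear in $n$. In short, the correct set to feed into Birkhoff is not the support of $F_0$ but the locus where the \emph{local} action is non-normal or non-abelian; that replacement, implemented by the two stabilizer lemmas, is what turns your outline into a proof and is precisely what your proposal leaves open.
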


We will use several times the observation  (see \cite[Lemma
6.2]{DFG1}) that for any finitely
generated subgroup  $G$ of  $\IET(\cald)$, the orbit of any point of the domain $\cald$ has
polynomial growth  in the following sense: 
given a finite generating set and the corresponding word metric  on
$G$,  
denoting by $B_R$ the ball of radius $R$ in $G$,
there exists a polynomial $P$
such that for all $x\in \cald$ and all $R\geq 0$, $\# (B_R.x)\leq P(R)$.

The theorem will be proved by showing that if $L$ did embed in $\IET$, there would exist
an orbit with exponential growth.

If $E\subset [0,1)$ has positive measure and if $T$ is any element of $\IET$, then by
Poincar\'e recurrence Theorem, the orbit of almost every $x\in E$ comes back to $E$.
We will need a more precise estimate, that comes from Birkhoff theorem.

\begin{lem}\label{lem;Birk}
Let $E\subset [0,1)$ have positive measure, and let $T$ be any element
in $\IET$.

Then there exists $x_0\in [0,1)$, and constants $\alpha >0,\, \beta>0$,  
such that for all $n$,   $$\#\{i\,|\ 0\leq i< n, T^i(x)\in E\}\geq \alpha n - \beta.$$
\end{lem}

\begin{proof}
Assume first that  the Lebesgue measure $\mu$ is ergodic, and apply Birkhoff theorem
to the characteristic function $f$ of $E$.
Then for almost every $x$, $\frac1n \sum_{i=0}^{n-1} f(T^i(x))\ra\mu(E)$, 
so for $n$ large enough, 
$\frac1n \sum_{i=0}^{n-1} f(T^i(x))\geq\frac12\mu(E)$, 
so $\#\{i| 0\leq i\leq n, T^i(x)\in E\}\geq \frac12 \mu(E) n $ for $n$ large enough,
and the result holds.

If  the Lebesgue measure is not ergodic, Birkhoff    
 theorem still says that the limit
exists almost everywhere, and the limit is a $T$-invariant function $l(x)$ having
the same average as $f$, that is $\int l(x)dx=\mu(E)$  (see \cite{KW} for instance). It follows that there exist points where $l(x)\geq \mu(E)/2$,
and the proof works the same.  
\end{proof}

We prove an abstract algebraic lemma that relates the stabilizer of a point in a product
of groups to coordinate-wise stabilizers.

\begin{lem}\label{lem;algeb}
 Consider some groups $F_i$, and an action of $F=F_1\times \dots\times F_n$ on a set $X$.
Let $x\in X$, $\Stab(x)$ its stabilizer in $F$, and $S_i=F_i\cap \Stab(x)$. 
Let $N_i$ the normalizer of $S_i$ in $F_i$.

Then $\Stab(x)\subset N_1\times \dots\times N_n$.

In particular, if no $S_i$ is normal in $F_i$,
then $\# F.x\geq 2^n$.
\end{lem}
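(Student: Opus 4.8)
The plan is to deduce the counting statement from the containment $\Stab(x)\subset N_1\times\dots\times N_n$ together with the orbit--stabilizer theorem, so the real content lies in that containment. First I would reduce as follows: by orbit--stabilizer, $\#F.x=[F:\Stab(x)]$, and since $\Stab(x)\subset N_1\times\dots\times N_n$ with both being subgroups of $F$, we get $[F:\Stab(x)]\geq [F:N_1\times\dots\times N_n]=\prod_{i=1}^n[F_i:N_i]$, using multiplicativity of the index in a direct product (the coset space of the product is the product of the coset spaces). If no $S_i$ is normal in $F_i$ then $N_i\subsetneq F_i$, so each factor index is at least $2$ and the product is at least $2^n$. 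Thus everything rests on the containment.

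To prove $\Stab(x)\subset N_1\times\dots\times N_n$, I would identify each $F_i$ with its image in $F$ under $s\mapsto(1,\dots,s,\dots,1)$ and fix $g=(g_1,\dots,g_n)\in\Stab(x)$; the goal is $g_i\in N_i$ for every $i$, \ie $g_iS_ig_i\m=S_i$. The key observation is that the factors of a direct product commute, so for $s\in S_i\subset F_i$ the conjugate $gsg\m$ computed in $F$ equals $(1,\dots,g_is g_i\m,\dots,1)$, that is, it again lies in the subgroup $F_i$. On the other hand $gsg\m$ fixes $x$: indeed $g\m.x=x$ and $s.x=x$, so $gsg\m.x=gs.x=g.x=x$. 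Hence $g_isg_i\m\in F_i\cap\Stab(x)=S_i$, which gives $g_iS_ig_i\m\subset S_i$.

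The one point needing a little care --- and the closest thing to an obstacle --- is upgrading this inclusion to the equality $g_iS_ig_i\m=S_i$ that membership in the normalizer requires; the inclusion alone would suffice only when $S_i$ is finite. I would obtain the reverse inclusion for free by rerunning the same computation with $g\m=(g_1\m,\dots,g_n\m)\in\Stab(x)$ in place of $g$, which yields $g_i\m S_ig_i\subset S_i$, \ie $S_i\subset g_iS_ig_i\m$. Combining the two inclusions gives $g_i\in N_i$ for each $i$, hence $g\in N_1\times\dots\times N_n$, establishing the containment and completing the proof.
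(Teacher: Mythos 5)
Your proposal is correct and follows essentially the same route as the paper's proof: both arguments rest on the fact that conjugation by $g\in\Stab(x)$ preserves $\Stab(x)$ (since $\Stab(x)$ is a group) and each factor $F_i$ (since $F_i\normal F$), hence preserves $S_i=F_i\cap\Stab(x)$, and both obtain the count $2^n$ from $[F:N_1\times\dots\times N_n]=\prod_i[F_i:N_i]$. The only difference is packaging: the paper observes directly that $g$ normalizes both subgroups and hence their intersection, which gives the equality $gS_ig\m=S_i$ in one stroke, whereas your coordinatewise computation yields only the inclusion $g_iS_ig_i\m\subset S_i$ and you correctly repair this by rerunning the argument with $g\m$ --- your caution here is well placed, since for infinite groups such a one-sided inclusion need not be an equality.
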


\begin{proof} 
Note that $N_1\times \dots\times N_n=\cap_i N_{F}(S_i)$,
so we have to prove that for each $i$, any $g\in \Stab(x)$ normalizes $S_i$.
If $g\in \Stab(x)$, 
then $g$ normalizes both $\Stab(x)$ and $F_i$ (because $F_i\normal F$),
hence normalizes their intersection, namely $S_i$.

Let us prove the last comment.  
The given assumption says that for all $i$,  $[
F_i:N_i]\geq 2$ so $\#F.x=[F:\Stab(x)]\geq 2^n$.
\end{proof}

 To prove the theorem, consider $L=F_0\wr \bbZ$ for some finite non-abelian group $F_0$.
We assume that $L$ embeds in $\IET$ and argue towards a contradiction. 
Let $t$ be a generator of $\bbZ$ viewed as a subgroup of $L$, and write $F_i=F_0^{t^i}$.  
We equip the group $G$ with the word metric corresponding to the
generating set $\{t\}\cup F_0$.  
The subgroup  $F_0 \times F_1\times \dots \times F_{n-1}$ is contained in the ball of radius $3n$.
We identify $L$ with the corresponding subgroup of $\IET$.
Since orbits grow polynomially \cite[Lemma 6.2]{DFG1}, for any  $x\in[0,1)$, the orbit of $x$ under $F_0\times\dots \times F_{n-1}$
has to be bounded by a polynomial in $n$.

\begin{prop}\label{prop_normal}
  For all $x\in [0,1)$,  $\Stab_{F_0}(x)\normal F_0$.
\end{prop}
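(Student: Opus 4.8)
The plan is to argue by contraposition: assuming that some point has non-normal $F_0$-stabilizer, I will exhibit a point whose orbit under $F_0\times\cdots\times F_{n-1}$ grows exponentially in $n$, contradicting the polynomial orbit growth of finitely generated subgroups of $\IET$ (\cite[Lemma 6.2]{DFG1}). The first step is to observe that the set of ``bad'' points is either empty or of positive measure. For $g\in\IET([0,1))$, the fixed set $\Fix(g)$ is exactly the union of the continuity pieces of $g$ on which the translation length is $0$; hence $\Fix(g)$ is a finite union of half-open intervals (a subdomain), with no isolated points. Consequently, for each subgroup $H\le F_0$ the set $E_H=\{y:\Stab_{F_0}(y)=H\}$ is a finite Boolean combination of the subdomains $\Fix(g)$, hence again a subdomain, and so is $E=\bigcup_{H\ \mathrm{non\text{-}normal}}E_H=\{y:\Stab_{F_0}(y)\ \text{is not normal in}\ F_0\}$. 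Since a non-empty subdomain has positive Lebesgue measure, the failure of the proposition (that is, $E\neq\varnothing$) forces $\mu(E)>0$. This is precisely the step that upgrades a single bad point to a positive-measure hypothesis, and it is what will yield the conclusion for \emph{every} $x$ rather than merely almost every $x$.

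Next I would relate the coordinate stabilizers to $\Stab_{F_0}$ along the $\bbZ$-direction. Since $F_i=F^{t^i}=t^{-i}F_0t^{i}$, for any point $x$ one has $\Stab_{F_i}(x)=t^{-i}\,\Stab_{F_0}(t^{i}x)\,t^{i}$; as conjugation by $t^{i}$ carries $F_0$ isomorphically onto $F_i$, the subgroup $S_i:=\Stab_{F_i}(x)$ is non-normal in $F_i$ precisely when $\Stab_{F_0}(t^{i}x)$ is non-normal in $F_0$, i.e. when $t^{i}x\in E$. Applying Lemma \ref{lem;Birk} to $E$ and to the transformation $t\in\IET$ furnishes a point $x$ and constants $\alpha,\beta>0$ with $\#\{i:0\le i<n,\ t^{i}x\in E\}\ge\alpha n-\beta$, and for every such $i$ the stabilizer $S_i$ is non-normal in $F_i$. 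Now $F_0\times\cdots\times F_{n-1}$ is an internal direct product of the commuting copies, so Lemma \ref{lem;algeb} applies with $F=F_0\times\cdots\times F_{n-1}$ acting at this point $x$, giving $\Stab(x)\subset N_0\times\cdots\times N_{n-1}$ and hence
$$\#\big((F_0\times\cdots\times F_{n-1}).x\big)=[F:\Stab(x)]\ \ge\ \prod_{i=0}^{n-1}[F_i:N_i]\ \ge\ 2^{\#\{i\,:\,S_i\ \text{non-normal}\}}\ \ge\ 2^{\alpha n-\beta}.$$

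Finally, since $F_0\times\cdots\times F_{n-1}$ is contained in the ball $B_{3n}$ for the generating set $\{t\}\cup F_0$, polynomial orbit growth (\cite[Lemma 6.2]{DFG1}) bounds the very same orbit by $P(3n)$ for a fixed polynomial $P$. For $n$ large this contradicts the exponential lower bound, so $E=\varnothing$, which means $\Stab_{F_0}(x)\normal F_0$ for every $x\in[0,1)$. The conceptual heart of the argument — and the step I expect to be the main obstacle — is the interaction in the second paragraph: Birkhoff recurrence is what guarantees a \emph{linear} number of coordinates with non-normal stabilizer, and the purely algebraic Lemma \ref{lem;algeb} is what converts this into a $2^{\Theta(n)}$ lower bound on an orbit contained in $B_{3n}$. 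The preliminary subdomain observation is the technical glue that makes Birkhoff applicable (by turning one bad point into a positive-measure set) and that secures the statement for all $x$.
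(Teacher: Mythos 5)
Your proof is correct and follows essentially the same route as the paper's: show the bad set $E$ is a subdomain of positive measure, apply Lemma \ref{lem;Birk} to get linearly many indices $i$ with $t^i x\in E$, convert this via Lemma \ref{lem;algeb} into a $2^{\alpha n-\beta}$ orbit bound, and contradict polynomial orbit growth. Your only additions are welcome expansions of details the paper leaves implicit (why $E$ is a subdomain via Boolean combinations of the sets $\Fix(g)$, and the conjugation relation $\Stab_{F_i}(x)=t^{-i}\Stab_{F_0}(t^i x)t^i$ linking coordinate stabilizers to $E$).
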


\begin{proof}
  Otherwise, let $E\subset [0,1)$ be the set of points where
$\Stab_{F_0}(x)\not\normal F_0$. Since $F_0$ is a finite group, this is a subdomain of $[0,1)$,
and it has positive measure.
We apply Birkhoff theorem (in the form of Lemma \ref{lem;Birk}), and get that there exists $x\in[0,1)$, $\alpha,\beta>0$, such that,
for all $n$, there exists $k_n\geq \alpha n-\beta$, and some indices
$0\leq i_1<\dots< i_{k_n}\leq n$ such that
$t^{i_j}(x)\in E$. Applying the algebraic Lemma \ref{lem;algeb} to $F=F_{i_1}\times \dots\times F_{i_{k_n}}$ we get that 
$\# (F.x)\geq 2^{k_n}\geq 2^{\alpha n-\beta}$.
Since $F$ is contained in a ball of linear radius in $L$, this contradicts polynomial growth of orbits.
\end{proof}

We now prove another algebraic lemma.

\begin{lem}\label{lem;algeb2}
Consider an action of $F=F_1\times \dots \times F_n$ on a set $X$.
Let $x\in X$, $\Stab(x)$ its stabilizer in $x$, and $S_i=\Stab(x)\cap F_i$ its stabilizer in $F_i$.

Assume that $\Stab(x)\normal F$ (in particular $S_i\normal F_i$).
Consider  $Z(F_i/S_i)$ the center of the quotient group $F_i/S_i$,
and $Q_i=(F_i/S_i)/Z(F_i/S_i)$.

Then the natural epimorphism $F_1\times \dots\times F_n\ra Q_1\times \dots\times Q_n$ 
factors through an epimorphism $F/\Stab(x)\onto Q_1\times \dots\times Q_n$.

In particular, if all groups $F_i/S_i$ are non-abelian, each $Q_i$ is non-trivial,
and $\#F.x=\#F/\Stab(x)\geq 2^n$.
\end{lem}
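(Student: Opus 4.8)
The plan is to reduce everything to a single containment: I would show that $\Stab(x)$ lies in the kernel of the natural surjection $\pi\colon F\ra Q_1\times\dots\times Q_n$. Once that is established, $\pi$ automatically factors through an epimorphism $F/\Stab(x)\onto Q_1\times\dots\times Q_n$, and the quantitative conclusion follows from orbit-stabilizer together with an elementary remark about centers.

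First I would pin down $\ker\pi$. Writing $\hat S_i\normal F_i$ for the preimage of $Z(F_i/S_i)$ under the quotient map $F_i\ra F_i/S_i$, the kernel of the composite $F_i\ra F_i/S_i\ra Q_i$ is exactly $\hat S_i$. Since $\pi$ is the product of these composites over $i$, its kernel is $\hat S_1\times\dots\times\hat S_n$. So the whole statement amounts to checking $\Stab(x)\subset\hat S_1\times\dots\times\hat S_n$.

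The crux — and the one computation that genuinely uses the hypothesis $\Stab(x)\normal F$ — is this containment. Take $g=(g_1,\dots,g_n)\in\Stab(x)$, fix an index $i$, and let $h_i\in F_i$, viewed inside $F$ as $(1,\dots,h_i,\dots,1)$. Because $\Stab(x)$ is normal in $F$, the conjugate $h_i g h_i^{-1}$ again lies in $\Stab(x)$, and hence so does $h_i g h_i^{-1} g^{-1}$. Since $h_i$ only affects the $i$-th coordinate and the direct factors commute, this element equals $(1,\dots,[h_i,g_i],\dots,1)$, which in particular lies in $F_i$. Therefore $[h_i,g_i]\in\Stab(x)\cap F_i=S_i$ for every $h_i\in F_i$, which says precisely that the image of $g_i$ in $F_i/S_i$ is central. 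Thus $g_i\in\hat S_i$ for each $i$, giving $g\in\hat S_1\times\dots\times\hat S_n$, as wanted.

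Finally, since $\pi$ is surjective (each map $F_i\ra Q_i$ is) and vanishes on $\Stab(x)$, it descends to an epimorphism $F/\Stab(x)\onto Q_1\times\dots\times Q_n$. For the last assertion, if $F_i/S_i$ is non-abelian then $Z(F_i/S_i)\neq F_i/S_i$, so $Q_i$ is non-trivial and hence $\#Q_i\geq 2$; consequently $\#F.x=[F:\Stab(x)]=\#(F/\Stab(x))\geq\#(Q_1\times\dots\times Q_n)\geq 2^n$. I expect the commutator computation of the third paragraph to be the only real obstacle; the identification of the kernel and the descent of $\pi$ are formal.
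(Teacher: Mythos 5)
Your proof is correct and takes essentially the same route as the paper's: the crux in both is that for $g=(g_1,\dots,g_n)\in\Stab(x)$ and $h_i\in F_i$, the commutator $[h_i,g_i]$, viewed in $F$, is a product of elements of $\Stab(x)$ (by normality of $\Stab(x)$) and lies in $F_i$, hence lies in $S_i=\Stab(x)\cap F_i$, so each $\bar g_i$ is central in $F_i/S_i$. Your explicit identification of $\ker\pi$ as $\hat S_1\times\dots\times\hat S_n$ and the final counting via orbit-stabilizer simply spell out steps the paper compresses into ``The lemma follows.''
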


\begin{proof}
Consider $g=(g_1,\dots,g_n)\in \Stab(x)$, and denote by $\bar g_i$ the image of $g_i\in F_i/S_i$.
 We have to prove that    
$\bar g_i$ is central in $F_i/S_i$, in other words that for all $a\in F_i$, 
 $[g_i,a]\in S_i$. Since        
$S_i=\Stab(x)\cap F_i$ is normal in $F$, $[g_i,a]=g_i(ag_i\m a\m)\in S_i$.
The lemma follows.
\end{proof}

\begin{proof}[Proof of Theorem \ref{thm_LLNA}]
Fix $k\geq 1$ and consider the subgroup $L'\subset L$ generated by $t^k$, and $F_0\times\dots\times F_{k-1}$.
Clearly, $L'\simeq F'_0\wr \bbZ $ where $F'_0=F_0\times \dots\times F_{k-1}$.
Applying Proposition \ref{prop_normal} to $L'$, we get that for all $x$, and all $k$, the stabilizer of $x$
in $F_0\times\dots\times F_{k-1}$ is normal in $F_0\times\dots\times F_{k-1}$.
 
Let $E\subset [0,1)$ be the set of points $x$ where $F_0/\Stab(x)$ is non-abelian.
If $E$ is empty, any commutator of $F_0$ acts trivially in $[0,1)$, so $F_0$ does not act faithfully, a contradiction.
So $E$ is a  non-empty subdomain and has positive measure.
By Lemma \ref{lem;Birk},  there exists $x\in[0,1)$, $\alpha,\beta>0$, such that,
for all $n$, there exists $k_n\geq \alpha n-\beta$, and some indices
$0\leq i_1<\dots< i_{k_n}\leq n$ such that
$t^{i_j}(x)\in E$.

Applying the second algebraic Lemma \ref{lem;algeb2} to $F=F_{i_1}\times \dots\times F_{i_{k_n}}$ we get that 
$\# F.x\geq 2^{\alpha n-\beta} $.
This contradicts polynomial growth of orbits as above. 
\end{proof}

\subsection{Uncountably many solvable groups, via  lamplighter-lighters}

We finish by an abundance result among solvable subgroups of $\IET$
(necessarily with a lot of torsion in view of Theorem \ref{thm;tf_vA}).

\begin{thm}\label{thm_uncountable}
  There exist uncountably many isomorphism classes of 3-generated solvable subgroups of derived length $3$
in $\IET$.
\end{thm}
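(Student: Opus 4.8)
The plan is to construct an explicit uncountable family of subgroups of $\IET$ indexed by a real parameter (the length of an interval $J$), following the "lamplighter-lighter" construction sketched in the introduction, and then to show that the isomorphism type of each group records its parameter modulo a countable subgroup of $\bbR$. Concretely, I would start from the lamplighter $G=(\bbZ/3\bbZ)\wr\bbZ$ realized on $\cald=(\bbZ/3\bbZ)\times(\bbR/\bbZ)$, where the generator $t$ of $\bbZ$ acts as the synchronized irrational rotation $R$ of angle $\alpha$ on the three circles and the lamp is the cyclic permutation $\sigma$ of the three circles supported over a fixed interval. Then I pass to $\cald'=(\bbZ/2\bbZ)\times\cald$, let $G$ act diagonally (ignoring the new $\bbZ/2\bbZ$ coordinate), and for an interval $J\subset\{0\}\times(\bbR/\bbZ)$ define the involution $\tau_J$ that flips the $\bbZ/2\bbZ$ coordinate exactly over $J$. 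The group $H_J=\grp{G,\tau_J}$ is a semidirect product $G\semidirect\calf_J$, where $\calf_J$ is the abelian group of exponent $2$ generated by all $G$-conjugates of $\tau_J$; since $G$ is metabelian and $\calf_J$ is abelian, $H_J$ is solvable of derived length at most $3$, and one checks it is exactly $3$ and generated by $t$, $\sigma$, $\tau_J$.

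The heart of the argument is to extract from the abstract isomorphism type of $H_J$ enough geometric data to recover $|J|$. First I would identify $\calf_J$ intrinsically: it should be the unique maximal abelian normal subgroup of exponent $2$, or more safely a canonically defined characteristic subgroup (for instance the torsion part of a suitable term of the derived series), so that any isomorphism $H_J\cong H_{J'}$ carries $\calf_J$ to $\calf_{J'}$ and induces an isomorphism of the quotients $G$ and of the $G$-module structures on $\calf_J$. The module $\calf_J$ is an $\bbF_2[G]$-module, and its structure is governed by the relations among the conjugates $R^m\sigma^j\tau_J\sigma^{-j}R^{-m}$; these relations are exactly the additive relations among the indicator vectors of the translated copies of $J$ under the rotation action combined with the cyclic lamp action. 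The key point, and this is where Birkhoff's ergodic theorem enters (as in Lemma \ref{lem;Birk}), is that the \emph{frequency} with which the dynamical system returns configurations realizing a given relation is controlled by the measure $|J|$; by computing an invariant of the $\bbF_2[G]$-module $\calf_J$ that is an ergodic average, I can read off $|J|$ up to the countable ambiguity coming from the countably many possible "reindexings" (rescalings and the countable group generated by $\alpha$).

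The main obstacle I anticipate is proving that the module invariant genuinely depends on $|J|$ and is not washed out: I must show that two intervals $J,J'$ with $|J|-|J'|$ not in the relevant countable group $\Lambda\subset\bbR$ yield non-isomorphic $\calf_J,\calf_{J'}$ as $G$-modules, hence non-isomorphic $H_J,H_{J'}$. The natural invariant is a relative measure or "density of lamps lit" attached to the module, computed as a limit $\lim_n \frac1n\#\{0\le i<n : R^i\text{-translate of }J\text{ contributes}\}$, which by Birkhoff converges to something expressible in terms of $|J|$; the delicate step is checking that an abstract isomorphism preserving $\calf_J$ must preserve this density rather than merely the abstract module, and in particular that the only isomorphisms of the ambient $G$ available are the ones that rescale $|J|$ by elements of a countable group. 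I would handle this by showing $\Aut(G)$ (or the relevant outer action) is countable and acts on the possible length parameters through translations by $\Lambda$, so that each isomorphism class of $H_J$ contains only countably many values of $|J|$; since $|J|$ ranges over an interval of the continuum, uncountably many isomorphism classes must occur. The remaining steps — verifying derived length exactly $3$, three-generation, and that $H_J$ is a genuine subgroup of $\IET(\cald')\cong\IET$ — are routine once the semidirect product structure is in place.
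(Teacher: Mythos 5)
Your construction, your intrinsic characterization of $\calf_J$ (it is exactly the set of elements of square one, hence the unique maximal exponent-$2$ subgroup, which is how the paper identifies it), and your overall strategy --- Birkhoff density plus countable ambiguity forces uncountably many classes --- are the same as the paper's. But there is a genuine gap at the decisive step. The ``density of lamps lit'' is never actually defined: ``the $R^i$-translate of $J$ contributes'' has no meaning attached to the abstract $\bbF_2[G]$-module, and your plan for the delicate step (countability of $\Aut(G)$, said to act on ``length parameters'' by translations) presupposes exactly what must be proved, namely that the abstract group or module carries a well-defined length parameter at all. Concretely, an isomorphism $\phi:H_{J_1}\to H_{J_2}$ sends the triple $(\tau_{J_1},\sigma,R)$ to a triple that need not be standard: $\phi(\tau_{J_1})$ is an arbitrary element of $\calf_{J_2}$, $\phi(\sigma)$ may be an \emph{exotic} order-$3$ element $S'\tau'$ with $\tau'\in\calf_{J_2}$, and $\phi(R)=R^{\pm 1}S''\tau''$. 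What is needed is that the transported commutation pattern $\{n : [\phi(\sigma),\phi(\tau_{J_1})^{\phi(R)^n}]\neq 1\}$ is again of the form $\{n : n\alpha\in E\}$ for $E$ a finite union of intervals whose endpoints lie in a countable set determined by $J_2$, $I$ and $\alpha$; this is precisely the paper's Lemma \ref{lem_main}, proved via Facts \ref{fact1} and \ref{fact2} and the analysis of the torsion subgroup, and it is the technical heart of the whole proof. Nothing in your outline supplies it: countability of $\Aut(G)$ is true but trivial (it holds for every finitely generated group) and it controls neither the choice of $\phi(\tau_{J_1})\in\calf_{J_2}$ nor the convergence of your ergodic average on the $H_{J_2}$ side.

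Two further remarks. First, if one only wants the theorem and not the sharper arithmetic control of Proposition \ref{prop_modQ}, the gap can be bridged more softly than in the paper: for any triple $(h_1,h_2,h_3)$ of elements of a countable group $H$, the density (when it exists) of the set $\{n\geq 0 : [h_2,\,h_3^n h_1 h_3^{-n}]\neq 1\}$ is a well-defined number, there are only countably many triples, and an isomorphism transports this set of integers verbatim; since for the standard triple $(\tau_{J_1},\sigma,R)$ in $H_{J_1}$ this set equals $\{n : R^n(J_1)\cap I\neq\es\}$, whose density is $\tfrac12+|J_1|$ by Fact \ref{fact1} and equidistribution of the irrational rotation (the Birkhoff argument of Lemma \ref{lem;Birk}), one concludes that $\tfrac12+|J_1|$ lies in a countable set attached to $H_{J_2}$, and the counting finishes as you indicate. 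This bypasses the analysis of where $\phi$ sends the generators, which is exactly where your module-theoretic formulation stalls. Second, your plan to verify derived length exactly $3$ by direct computation is feasible but unnecessary: the paper observes that there are only countably many isomorphism classes of finitely generated metabelian groups, so uncountably many of the $H_{J_x}$ automatically have derived length exactly $3$.
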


These will be obtained as iterated lamplighter-like
constructions. Again we will be free to choose the domain $\cald$ for convenience.

\newcommand{\un}{{\mathbbm{1}}}

For the time being, take $\cald$ an arbitrary domain. Let $G<IET(\cald)$ and $(A,+)$ a finite abelian group.
The following construction generalizes the construction showing that lamplighter groups embed.
In this case, $G$ would be the cyclic group generated by an irrational rotation on a circle $\cald$.

Let $\calf=A^\cald$ be the additive group of all functions on $\cald$ with values in $A$.
Given $a\in A$ and $J\subset\cald$, we denote by $a\un_J\in \calf$ the function
defined by $a\un_J(x)=a$ if $x\in J$, and $a\un_J(x)=0$ otherwise.
The group $G$ acts on $\calf$ by precomposition.
Given a subdomain $J\subset \cald$, let $\calf_J\subset \calf$ be
the smallest $G$-invariant additive subgroup containing the functions $a\un_J$ for $a\in A$.

\begin{prop}(Lamplighter-like construction)

  Let $G$ be a subgroup of $\IET(\cald)$, $A$ a finite abelian group,  
 and $J\subset \cald$. 
Then    
  $\calf_J\isemidirect G$ embeds in $\IET$.
\end{prop}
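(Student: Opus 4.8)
The plan is to realize the abstract semidirect product concretely on the enlarged domain $\cald'=A\times\cald$, a disjoint union of $|A|$ copies of $\cald$, mimicking the lamplighter embedding of Proposition \ref{prop_abelianLL}. First I would record the structure of $\calf_J$: since $G$ acts by precomposition, $g\cdot(a\un_J)=a\un_{gJ}$, so the smallest $G$-invariant additive subgroup containing the functions $a\un_J$ is exactly the set of finite sums $\sum_i a_i\un_{g_iJ}$. In particular every $f\in\calf_J$ is piecewise constant with finitely many pieces, namely the atoms cut out by the finitely many subdomains $g_iJ$.

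Next, for any such piecewise-constant $f$ I define $\sigma_f\in\IET(\cald')$ by $\sigma_f(a',x)=(a'+f(x),x)$. On each subdomain where $f\equiv c$ this permutes the copies of $\cald$ by $a'\mapsto a'+c$ while fixing the $\cald$-coordinate, so it is an orientation-preserving piecewise isometry with finitely many discontinuities, hence a genuine interval exchange transformation; moreover $\sigma_f\sigma_{f'}=\sigma_{f+f'}$, so $f\mapsto\sigma_f$ embeds $\calf_J$ additively into $\IET(\cald')$. I also embed $G$ diagonally by $\tilde g(a',x)=(a',g(x))$.

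The computational core is the conjugation relation $\tilde g\,\sigma_f\,\tilde g^{-1}(a',x)=(a'+f(g^{-1}x),x)=\sigma_{g\cdot f}(a',x)$, i.e. $\tilde g\,\sigma_f\,\tilde g^{-1}=\sigma_{g\cdot f}$ with $g\cdot f=f\circ g^{-1}$, which is precisely the precomposition action of $G$ on $\calf_J$ defining the semidirect product (should the opposite convention be preferred, embedding $G$ by $g\mapsto\widetilde{g^{-1}}$ fixes the sign). Consequently $\Phi(f,g):=\sigma_f\,\tilde g$ satisfies $\Phi(f_1,g_1)\Phi(f_2,g_2)=\sigma_{f_1+g_1\cdot f_2}\,\widetilde{g_1g_2}=\Phi\big((f_1,g_1)(f_2,g_2)\big)$, so $\Phi\colon\calf_J\isemidirect G\to\IET(\cald')$ is a homomorphism.

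Injectivity is then immediate, and this is the pleasant feature distinguishing this argument from Proposition \ref{prop_abelianLL}, where relations had to be ruled out by hand: if $\sigma_f\tilde g=\id$, then reading the $\cald$-coordinate of $\sigma_f\tilde g(a',x)=(a'+f(g(x)),g(x))$ forces $g(x)=x$ for all $x$, hence $g=\id$, and then the $A$-coordinate forces $f\equiv0$, so $f=0$. Since $\IET(\cald')\simeq\IET$, this embeds $\calf_J\isemidirect G$ in $\IET$. The only step demanding genuine care is verifying that each $\sigma_f$ has finitely many discontinuities, which is exactly where one uses that $\calf_J$ is the $G$-span of the single indicator function on $J$; everything else is a routine check of the homomorphism and kernel.
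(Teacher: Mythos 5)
Your proof is correct and follows essentially the same route as the paper: the same enlarged domain $\cald'=A\times\cald$, the same diagonal embedding of $G$, and the same lamp-type transformations, differing only in that you define $\sigma_f$ for all $f\in\calf_J$ at once rather than generating from the $\sigma_{a\un_J}$. The conjugation identity, the homomorphism $\Phi$, and the injectivity check that you spell out are exactly the verifications the paper compresses into ``it is clear,'' so this is the same argument written in full detail.
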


\begin{rem}
  In the particular case of Proposition  \ref{prop_abelianLL}, we proved that $\calf_J$ is isomorphic to $\oplus_{g\in G}A$.
In general, there is a natural morphism from $\oplus_{g\in G}A$ to $\calf_J$, but it might be non-injective.
One easily constructs examples using a group $G$ that does not act freely on $\cald$.
\end{rem}

\begin{proof}
  Take $\cald'=A\times \cald$.
Embed $G$ in $\IET(\cald')$ by
setting for all $g\in G$,  $g(a,x)=(a,g(x))$.

For all $b\in A$, consider $\sigma_b\in IET(\cald')$ defined by
$\sigma_b(a,x)=(b+a,x)$ if $x\in J$,
and by $\sigma_b(a,x)=(a,x)$ if $x\notin J$.

We claim that the subgroup $G'$ generated by $G$ and $\{\sigma_b|b\in A\}$
is isomorphic to $ \calf_J\isemidirect G$.

Consider  the subset $H\subset IET(\cald')$ of all $T\in IET(\cald')$ such that 
there exists some transformation $g\in G$ and some $f\in \calf_J$ such that 
$T(a,x)=(a+f(x),g(x))$.
It is clear that $H$ is a subgroup of $\IET(\cald')$, that $H$ contains $G'$,
and that $G'=H$.
Finally, the fact that $H\simeq \calf_J\isemidirect G$ is also clear.
\end{proof}

\begin{figure}[ht]
\begin{center}
\includegraphics[scale=.4]{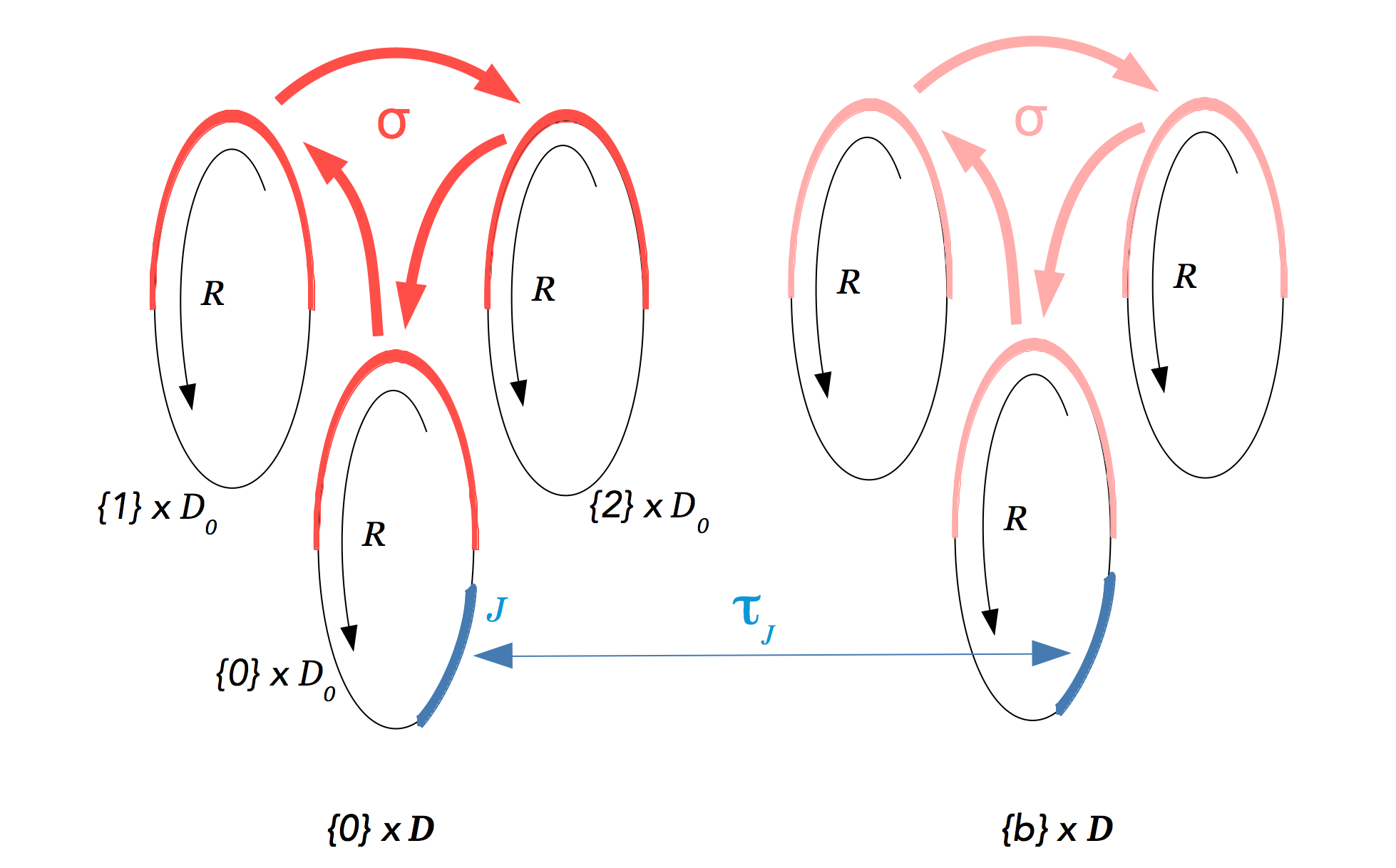}
\caption{A Lamplighter-like construction in $\IET$ producing the
  examples of Theorem \ref{thm_uncountable}: on the left 
, the domain $\cald= \mathbb{Z}/3\mathbb{Z} \times \cald_0$, and the
Lamplighter group $G =  \mathbb{Z}/3\mathbb{Z}  \wr \mathbb{Z}$
generated by a cyclic swap on the three intervals $\mathbb{Z}/3\mathbb{Z} \times I$ (in
bold red) and by
the simultaneous rotation $R$. On the right, $\{b\} \times \cald$  is the duplicated copy
of
$\cald$, with the swap $\tau_J$ on $J$ and its copy.  The group $G$ acts diagonally on
$\cald' =    \mathbb{Z}/2\mathbb{Z} \times \cald$. The group
$\displaystyle \left( \oplus_{G}  \mathbb{Z}/2\mathbb{Z} \right)
\isemidirect G$ acts on $\cald'$ but its image $H_J$ in $\IET(\cald')$
depends on $J$.
}
\label{fig;2}
\end{center}
\end{figure}

Let $\cald_0=\bbR/\bbZ$,   and $I= [0,1/2[\subset \cald_0$.  Fix once and
for all   $\alpha\notin \bbQ$, $\rho_\alpha$ the rotation
of $\cald_0$ of angle $\alpha$.  Let $G$ be the lamplighter
group  $(\bbZ/3\bbZ) \wr \bbZ$ realised as a subgroup of $\IET(\cald)$
for $\cald = ( \bbZ/3\bbZ)\times \cald_0 $ as in Proposition
\ref{prop_abelianLL}. 
We denote by $R$ the rotation $(a,x)\mapsto (a, \rho_\alpha(x))$ on $\cald$.

We now perform this lamplighter-like construction a second time,
starting with $G$ acting on $\cald$,  with $A=\bbZ/2\bbZ$
and with some $J\subset  \{0\}\times \cald_0 \subset \cald$.  We recall
that  $\calf_{J}$ is the   subgroup of the group of functions from
$\cald$ to $A=\bbZ/2\bbZ$ as above.   Define $H_{J}=
   \calf_{J} \isemidirect G$ the lamplighter-like group
   thus obtained, on $\cald'=(\bbZ/2\bbZ) \times \cald$. See Figure \ref{fig;2}.

\begin{prop}\label{prop_modQ}
Assume that $J_1,J_2\subset \{0\}\times \cald_0 \subset \cald$,
and that $|J_1|,|J_2|<\frac12$.

If $H_{J_1}\simeq H_{J_2}$   
then $|J_1|\in \mathrm{Vect}_\bbQ(1,\alpha,a_2,b_2)$, where $a_2,b_2\in\bbR/\bbZ$ are the endpoints of $J_2$.
\end{prop}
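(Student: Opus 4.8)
The idea is to extract from the \emph{isomorphism type} of $H_J$ some real invariant that captures $|J|$ up to a countable ambiguity. The group $H_J = \calf_J \isemidirect G$ carries the action of $G = (\bbZ/3\bbZ)\wr\bbZ$ on the abelian group $\calf_J$ of $\bbZ/2\bbZ$-valued functions, and the key quantity should be the way this $G$-module is built. Concretely, I would first reconstruct $\calf_J$ intrinsically from the group $H_J$: one expects $\calf_J$ to be (virtually, or after a canonical normalization) the maximal abelian normal $2$-subgroup, or more safely the kernel of the natural projection $H_J \onto G$ recovered algebraically from the derived series, since $\calf_J$ has exponent $2$ and $G$ sits on top. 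An isomorphism $H_{J_1}\simeq H_{J_2}$ then induces a $G$-equivariant (up to an automorphism of $G$) isomorphism $\calf_{J_1}\simeq\calf_{J_2}$ of $\bbZ/2\bbZ[G]$-modules.

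\textbf{Turning the module isomorphism into a measurement of $|J|$.} The next and central step is to read $|J|$ off the module $\calf_J$. Here is where the dynamics enter. The generator $R$ of the $\bbZ$-direction acts by the irrational rotation $\rho_\alpha$, and $\calf_J$ is spanned over $\bbZ/2\bbZ$ by the characteristic functions $\un_{R^i J}$ of the translates $R^i J$. The essential feature is that the \emph{relations} among these translates — that is, which finite sums $\sum_{i\in F}\un_{R^i J}$ vanish in $\calf_J$ — encode the combinatorics of how the translates $\{R^i J\}$ overlap, and by equidistribution (Birkhoff/Weyl for the irrational rotation) this combinatorics determines the length $|J|$. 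I would define an invariant of the abstract $\bbZ/2\bbZ[\bbZ]$-module $\calf_J$, for instance via the Fourier/spectral decomposition or via the asymptotic density of a suitable family of module elements, designed so that it returns a real number equal to $|J|$ modulo the subgroup generated by $1,\alpha$ and the data of $J_2$. The claim of the proposition — that $|J_1|\in\mathrm{Vect}_\bbQ(1,\alpha,a_2,b_2)$ — is exactly the statement that the measurement is well-defined only up to this countable $\bbQ$-vector space, reflecting the finite ambiguity (relabeling generators of $G$, choice of $R$ versus $R^{-1}$, rational recombinations of the $\un_{R^iJ}$) in recovering $\calf_J$ abstractly.

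\textbf{Executing the ergodic computation.} To make the measurement concrete I would, for a fixed point $x$ of $\cald_0$, consider the $\bbZ/2\bbZ$-valued sequence $n\mapsto \un_J(\rho_\alpha^n x) = $ (the indicator that $\rho_\alpha^n x\in J$), whose Cesàro average converges to $|J|$ by Birkhoff's theorem applied to the uniquely ergodic rotation $\rho_\alpha$. The module $\calf_J$ determines this sequence up to the finitely generated symmetries above, and Birkhoff's theorem (Lemma~\ref{lem;Birk} or its sharper unique-ergodicity form) converts the combinatorial/spectral invariant into the numerical value $|J|$. An isomorphism $H_{J_1}\simeq H_{J_2}$ thus forces the corresponding averages to agree after applying one of the countably many module symmetries, each of which perturbs the computed length by an element of $\mathrm{Vect}_\bbQ(1,\alpha,a_2,b_2)$; hence $|J_1|$ lies in that $\bbQ$-span.

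\textbf{The main obstacle.} The delicate point is the first step: recovering $\calf_J$ and its $G$-module structure \emph{purely algebraically} from $H_J$, with enough rigidity that the residual ambiguity is genuinely a countable $\bbQ$-vector space and not something larger. One must pin down the automorphisms of $G=(\bbZ/3\bbZ)\wr\bbZ$ and show they act on the possible reconstructions of $\calf_J$ in a controlled way — this is where the specific small groups $\bbZ/3\bbZ$ and $\bbZ/2\bbZ$ and the derived-length-$3$ structure are used to limit the symmetries. Once the module is canonical up to this finite/countable data, the ergodic extraction of $|J|$ is comparatively routine given Birkhoff's theorem.
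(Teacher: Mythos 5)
Your overall skeleton (recover $\calf_J$ algebraically inside $H_J$, transport combinatorial data through the isomorphism, convert it into a length via Birkhoff) matches the paper's strategy, and your first step is sound in its cleanest form: since $G=(\bbZ/3\bbZ)\wr\bbZ$ has no $2$-torsion, $\calf_J$ is exactly the set of elements of $H_J$ of order dividing $2$, so any isomorphism $\phi:H_{J_1}\to H_{J_2}$ carries $\calf_{J_1}$ onto $\calf_{J_2}$, equivariantly over the induced isomorphism of the quotients $\simeq G$. But the central mechanism you propose for reading $|J|$ off this module fails. Writing $J=[a,b)$, you claim that the relations among the translates, i.e.\ the vanishing finite sums $\sum_{i\in F}\mathbbm{1}_{R^iJ}=0$ in $\calf_J$, encode the overlap combinatorics of $\{R^iJ\}$ and hence, via equidistribution, the length $|J|$. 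In fact there are \emph{no} such relations at all: a finite sum of indicators of arcs vanishes mod $2$ only if every point of the circle is an endpoint of an even number of them, which forces the set of left endpoints $\{a+i\alpha\}_{i\in F}$ to coincide with the set of right endpoints $\{b+i\alpha\}_{i\in F}$; this would force $F=F+k$ for an integer $k$ with $b-a\equiv k\alpha \pmod 1$, which is impossible for a finite nonempty $F$. Hence the $\bbZ/2\bbZ[\grp{R}]$-module generated by $\mathbbm{1}_J$ is free of rank one for \emph{every} arc $J$: it is the same abstract module regardless of $|J|$, so no Fourier/spectral or density invariant of it can recover the length. Overlaps between translates simply do not produce $\bbZ/2\bbZ$-linear relations.

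The information that actually pins down $|J_1|$ lies in the interaction of $\calf_J$ with the $3$-torsion of $G$, not in the $\bbZ$-action alone: by Fact \ref{fact1}, $[\sigma,\tau_{R^n(J_1)}]=1$ if and only if $R^n(J_1)\cap I=\es$, so the commutation pattern $n\mapsto[\sigma,\tau_{J_1}^{R^n}]$ records the set $\{n: n\alpha\in J_1-I\}$, whose density is $\frac12+|J_1|$. This is what the paper transports through $\phi$: writing $\phi(\tau_{J_1})=\tau\in\calf_{J_2}$, $\phi(\sigma)=S'\tau'$, and $\phi(R)=R^{\eps}S''\tau''$ with $\eps=\pm1$ (because $\phi(R)$ must generate modulo the torsion subgroup $N=\calf_{J_2}\cals$), Facts \ref{fact1} and \ref{fact2} convert this pattern into $\{n:R^{\eps n}(K)\cap K'\neq\es\}$, where $K$ (the locus where $\tau$ is non-constant on fibers) has endpoints in $\cale_{J_2}$ and $K'$ (the support of $S'$) has endpoints in $\cale_I$; Birkhoff then gives $\frac12+|J_1|=|K'-K|\in\mathrm{Vect}_\bbQ(1,\alpha,a_2,b_2)$. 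Note that this requires no classification of $\Aut(G)$ or of all abstract reconstructions of the module, only the normal forms of the three images $\phi(\tau_{J_1}),\phi(\sigma),\phi(R)$ modulo torsion together with the two commutation facts. This is precisely the step you flag as the ``main obstacle'' and leave unresolved, and the substitute invariant you propose carries no information; so the proposal has a genuine gap at its core.
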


Denote by     
$\cale_I$ and $\cale_{J_i}\subset \bbR/\bbZ$ the orbit under the rotation $\rho_\alpha$ of the endpoints of $I$ and $J_i$ respectively
(ie $\cale_I=\{0,\frac12\}+\alpha\bbZ$, $\cale_{J_i}=\{a_i,b_i\}+\alpha\bbZ$). 

\begin{lem}\label{lem_main}
If  $H_{J_1}\simeq H_{J_2}$, then there exist   two subdomains $K,K'\subset \bbR/\bbZ$
with endpoints in $\cale_{J_2}$ and $\cale_I$ respectively and
$\eps\in\{\pm 1\}$  
such that $$\forall n\in\bbZ,\quad \rho_\alpha^n(I)\cap J_1\neq \es \Longleftrightarrow  \rho_\alpha^{\eps n}(K)\cap K'\neq \es.$$
\end{lem}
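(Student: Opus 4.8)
The plan is to convert the abstract isomorphism $\Phi\colon H_{J_1}\to H_{J_2}$ into combinatorial data about translates of $I$ and of $J_2$ by following three canonical pieces of structure through $\Phi$: the largest normal $2$-subgroup, the induced automorphism of the quotient lamplighter $G=(\bbZ/3\bbZ)\wr\bbZ$, and a commutation pattern between specific generators.

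First I would identify $\calf_{J_i}$ as a characteristic subgroup of $H_{J_i}$. Since $\calf_{J_i}$ is a normal elementary abelian $2$-group while $G=H_{J_i}/\calf_{J_i}$ has only $3$-torsion (so $O_2(G)=1$), any normal $2$-subgroup of $H_{J_i}$ lies in $\calf_{J_i}$; hence $\calf_{J_i}=O_2(H_{J_i})$ is characteristic. Thus $\Phi(\calf_{J_1})=\calf_{J_2}$, and $\Phi$ induces an automorphism $\bar\Phi$ of $G$ together with a $\bar\Phi$-twisted isomorphism of $G$-modules $\calf_{J_1}\to\calf_{J_2}$. Using that the torsion subgroup $B=\bigoplus_{\bbZ}\bbZ/3\bbZ$ is characteristic in $G$ and that $\{\bar\Phi(R),\bar\Phi(\sigma)\}$ must generate $G$, a short computation in $\Aut(G)$ shows $\bar\Phi(R)=R^{\eps_0}b_0$ with $\eps_0\in\{\pm1\}$, $b_0\in B$, and $\bar\Phi(\sigma)=c\,R^{m}\sigma R^{-m}$ with $c\in(\bbZ/3\bbZ)^\times$, $m\in\bbZ$ (the point being that an automorphism sends the cyclic module generator $\sigma$ of $B$ to a \emph{unit}, i.e.\ a monomial). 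Consequently $\bar\Phi(R^n\sigma R^{-n})=c\,R^{\eps_0 n+m}\sigma R^{-(\eps_0 n+m)}=:\beta_n$.

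Next I would translate the geometry into commutation. A direct check on $\cald'$ shows that $R^n\sigma R^{-n}$ fails to commute with $\tau_{J_1}$ exactly when $\rho_\alpha^n(I)\cap J_1\neq\es$ (disjoint supports commute; on an overlap the cyclic $\bbZ/3\bbZ$-move and the $\bbZ/2\bbZ$-swap witness non-commutation), so the left-hand side of the lemma is the set $P_1$ of such $n$. Writing $f:=\Phi(\tau_{J_1})\in\calf_{J_2}$, commutation is preserved, and since $\calf_{J_2}$ is abelian and normal the lamp part of $\Phi(R^n\sigma R^{-n})$ drops out of the commutator, which reduces to the lamp element attached to the function $\beta_n\cdot f-f$. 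This is trivial iff $\beta_n$ fixes $f$, i.e.\ iff $\mathrm{supp}(f)\subset\cald$ has full or empty $\bbZ/3\bbZ$-fibre over every $x\in\rho_\alpha^{\eps_0 n+m}(I)$. Passing to the fibrewise quotient $(\bbZ/2\bbZ)^{3}\to(\bbZ/2\bbZ)^{3}/\mathrm{diag}\cong\mathbb{F}_4$ (on which the cyclic shift acts as multiplication by a cube root of unity, hence fixed-point freely), this reads $n\in P_1\iff\rho_\alpha^{\eps_0 n+m}(I)\cap W\neq\es$, where $W:=\mathrm{supp}(\bar f)$ for the image $\bar f\colon\cald_0\to\mathbb{F}_4$ of $f$.

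The main obstacle is to show $W$ is a subdomain with endpoints in $\cale_{J_2}$. I would work inside the $\mathbb{F}_2[G]$-module $\bar M_2$ generated by $e\,\un_{J_2}$: there $R$ merely rotates supports (keeping endpoints in $\cale_{J_2}$) while $\sigma$ only multiplies values by $\omega$ on translates of $I$, so each monomial $g\cdot(e\,\un_{J_2})$ has support a single translate of $J_2$; the delicate point is that $\mathbb{F}_2$-sums can create cancellations at $\cale_I$-breakpoints. Since $\tau_{J_1}$ generates $\calf_{J_1}$ as a $G$-module, $\bar f$ is a cyclic \emph{generator} of $\bar M_2$, and I would prove that generators are ``clean'', i.e.\ their support boundaries occur only in $\cale_{J_2}$, by characterizing the cyclic generators of $\bar M_2$ among all its elements; this is the real content of the lemma (note that for the downstream length computation one only needs $|W|\in\mathrm{Vect}_\bbQ(1,\alpha,a_2,b_2)$, which already follows from the weaker fact that endpoints lie in $\cale_{J_2}\cup\cale_I$ and $\tfrac12,\alpha$ lie in the span). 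Granting cleanness, the lemma follows by taking $K:=\rho_\alpha^{-m}(W)$ (endpoints in $\cale_{J_2}$), $K':=I$ (endpoints $\{0,\tfrac12\}\subset\cale_I$) and $\eps:=-\eps_0$, since $\rho_\alpha^{\eps_0 n+m}(I)\cap W\neq\es\iff\rho_\alpha^{-\eps_0 n-m}(W)\cap I\neq\es\iff\rho_\alpha^{\eps n}(K)\cap K'\neq\es$, which is exactly $P_1$.
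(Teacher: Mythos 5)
Your proposal follows the same architecture as the paper's proof: identify $\calf_{J_i}$ as a characteristic subgroup (the paper characterizes it as the set of elements of order at most $2$, you as $O_2(H_{J_i})$; both are correct), transfer the commutation pattern $[\sigma^{R^n},\tau_{J_1}]\neq 1 \Leftrightarrow \rho_\alpha^n(I)\cap J_1\neq\es$ through the isomorphism, strip the $\calf_{J_2}$-parts out of all commutators using that $\calf_{J_2}$ is abelian and normal (the paper's Facts \ref{fact1} and \ref{fact2}), and read the answer off the fibrewise non-constancy locus of $\Phi(\tau_{J_1})$. Your one genuine addition is the observation that the induced automorphism of $G$ must send $\sigma$ to a generator, hence a unit, of the free rank-one module $\cals\simeq\bbF_3[t^{\pm1}]$, so $\bar\Phi(\sigma)=cR^m\sigma R^{-m}$; this lets you take $K'=I$ exactly, whereas the paper allows $\phi(\sigma)=S'\tau'$ with $S'\in\cals$ arbitrary and takes $K'=\mathrm{supp}(S')$, a union of intervals with endpoints in $\cale_I$. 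That part of your argument is correct.

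The gap is precisely the step you flag and postpone: you never prove that $W$ has endpoints in $\cale_{J_2}$ rather than merely in $\cale_{J_2}\cup\cale_I$, and the lemma as stated asserts the former. Your worry is well founded, because the claim is genuinely false for a general element of $\calf_{J_2}$: taking $g=R^n\sigma R^{-n}$, the element $\tau_{J_2}\,(g\tau_{J_2}g\m)=\tau_{\{0,1\}\times(J_2\cap\rho_\alpha^n(I))}$ of $\calf_{J_2}$ (identifying $J_2$ with a subset of $\bbR/\bbZ$) has non-constancy locus $J_2\cap\rho_\alpha^n(I)$, which acquires an endpoint in $\cale_I\setminus\cale_{J_2}$ as soon as an endpoint of $\rho_\alpha^n(I)$ falls in the interior of $J_2$. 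So any proof of the stated conclusion must use that $\Phi(\tau_{J_1})$ is special --- e.g.\ that it generates $\calf_{J_2}$ as a $G$-module, as you propose --- and carrying this out is not routine, since $\calf_{J_2}$ is not a free $\bbF_2[G]$-module (elements of $G$ fixing $J_2$ pointwise act trivially on $\tau_{J_2}$), so one cannot argue that a generator is a unit multiple of $\tau_{J_2}$. As written, your text therefore does not prove Lemma \ref{lem_main} as stated. What you could not know is that the paper's own proof justifies this identical step only by the clause ``Since $\tau\in\calf_{J_2}$, $K$ is a union of intervals with endpoints in $\cale_{J_2}$'', which the example above shows is insufficient: you have isolated a genuine weak point of the paper rather than introduced a new one.

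Your fallback is also the correct repair. The weaker conclusion that your argument does establish (endpoints of $K$ in $\cale_{J_2}\cup\cale_I$, with $K'=I$) is all that Proposition \ref{prop_modQ} needs: then $K'-K$ is a union of intervals with endpoints in $(\cale_I-\cale_{J_2})\cup(\cale_I-\cale_I)\subset \mathrm{Vect}_\bbQ(1,\alpha,a_2,b_2)$, and the proof of Theorem \ref{thm_uncountable} goes through unchanged. So, read as a proof of the correspondingly weakened lemma, your argument is complete and even slightly sharper than the paper's on the $K'$ side; read as a proof of the lemma as literally stated, it contains the same unproven ``cleanness'' claim as the paper's proof.
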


\begin{proof}[Proof of Proposition \ref{prop_modQ} from Lemma \ref{lem_main}.]
We have that $\rho_\alpha^n(I)\cap J_1\neq \es$ if and only if $n\alpha\in J_1-I$,
and that $\rho_\alpha^{\eps n}(K)\cap K'\neq \es$ if and only if $n\alpha\in \eps(K'-K)$.

By Birkhoff theorem, we get that $|J_1-I|=|K'-K|$.
Since $|J_1|<\frac 12=|I|$, $|J_1-I|=\frac12 +|J_1|$.

On the other hand, since  $K'-K$ is a union of intervals whose endpoints are in 
$\cale_{I}-\cale_{J_2}\subset \mathrm{Vect}_\bbQ(1,\alpha,a_2,b_2)$,
the proposition follows.
\end{proof}

We view $H_J$ as the semidirect product $\calf_J\isemidirect G$.
Thus each element $h\in H_J$ can  be written (uniquely) under the form
$h=g\tau$ with $\tau\in\calf_J$, $g\in G$.
Now we view $G$ as a group of interval exchange transformations on $\cald=(\bbZ/3\bbZ)\times (\bbR/\bbZ)$.
It is generated by the rotation $R:(a,x)\mapsto (a,x+\alpha)$, and 
by the lamp element $\sigma\in \cals$ (of order 3) that sends
$(a,x)\in (\bbZ/3\bbZ)\times (\bbR/\bbZ)$ to $(a+1,x)$ for $x\in I$, and
is the identity otherwise (see left part of figure \ref{fig;2}). 
Since $G$ is a lamplighter group, any $g\in G$ can be written uniquely
as  $R^nS_f$
where $S_f$ is an IET of the form
$(a,x)\ra (f(x)+a,x)$ for some function $f:\bbR/\bbZ\ra \bbZ/3\bbZ$.
We denote by $\cals=\{S_f\}$ the $3$-torsion abelian group of lamps of $G$.
It is freely generated by the $\grp{R}$-conjugates of $\sigma$. 
Thus any $h\in H_J$ is written in a unique way as $h=R^nS_f \tau$ as above.

The kernel of the natural map $H_J\ra \bbZ$ is the torsion group $N=\calf_J\cals$.
It is exactly the set of elements of finite order.

Denote by $b$ be the generator of $\bbZ/2\bbZ$. Given $K\subset \cald$
denote by  $\tau_K=b\un_{K}\in \calf$. For certain $K$ (for instance
  $K=J$), $\tau_K\in \calf_J$ and thus is in $H_J$.

\begin{fact}\label{fact1} 
If $K\subset \{0\}\times \cald_0$, then
  $[\sigma,\tau_K]=1$ if and only if $K\cap I=\es$.
\end{fact}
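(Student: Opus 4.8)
The plan is to compute the commutator $[\sigma,\tau_K]$ directly as an element of $\IET(\cald')$, reduce its triviality to the single geometric condition $\sigma K=K$, and then read off that condition from the explicit description of $\sigma$.

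First I would record two elementary facts. Since $b$ generates $\bbZ/2\bbZ$, the element $\tau_K=b\un_K$ is an involution, so $\tau_K^{-1}=\tau_K$; and since $\sigma\in G$ acts on $\cald'=(\bbZ/2\bbZ)\times\cald$ only through the second coordinate, $\sigma(c,y)=(c,\sigma(y))$. A one-line check then yields the conjugation formula $\sigma\tau_K\sigma^{-1}=\tau_{\sigma K}$: applying $\sigma^{-1}$, then $\tau_K$, then $\sigma$ to $(c,y)$ gives $(c+\un_K(\sigma^{-1}y),y)$, and $\un_K\circ\sigma^{-1}=\un_{\sigma K}$ because $\sigma^{-1}y\in K\iff y\in\sigma K$. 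Combining these, $[\sigma,\tau_K]=\sigma\tau_K\sigma^{-1}\tau_K^{-1}=\tau_{\sigma K}\tau_K$. Because $\calf$ consists of $\bbZ/2\bbZ$-valued functions, summing two indicators produces the indicator of the symmetric difference, $b\un_{\sigma K}+b\un_K=b\un_{\sigma K\triangle K}$, so $[\sigma,\tau_K]=\tau_{\sigma K\triangle K}$. This is the identity of $\IET(\cald')$ exactly when $\sigma K\triangle K$ has measure zero, i.e. when $\sigma K=K$ as subdomains.

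It then remains to analyze the action of $\sigma$ on $K$. Writing $K=\{0\}\times K_0$ with $K_0\subset\cald_0$, recall that $\sigma$ sends $(0,x)$ to $(1,x)$ for $x\in I$ and fixes $(0,x)$ for $x\notin I$, so $\sigma K=\bigl(\{1\}\times(K_0\cap I)\bigr)\cup\bigl(\{0\}\times(K_0\setminus I)\bigr)$. If $K\cap I=\es$ (that is, $K_0\cap I=\es$) this equals $\{0\}\times K_0=K$; conversely, if $K_0\cap I\neq\es$ then $\sigma K$ meets the circle indexed by $1$, which $K$ does not, so $\sigma K\neq K$. Hence $\sigma K=K\iff K\cap I=\es$, which is the claim. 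I do not anticipate a genuine obstacle: the only points that require care are tracking the $\bbZ/2\bbZ$-coefficients so that the \emph{symmetric} difference (rather than a plain union) appears in the commutator, and remembering that equalities of subdomains hold only up to the finite set of discontinuity points of $\sigma$.
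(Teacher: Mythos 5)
Your proposal is correct and follows essentially the same route as the paper: compute $\sigma\tau_K\sigma^{-1}=b\un_{\sigma(K)}$ from the semidirect product structure, reduce triviality of the commutator to $\sigma(K)=K$, and observe that when $K\cap I\neq\es$ the set $\sigma(K)$ meets a circle of $\cald$ that $K$ does not. The only difference is that you spell out the symmetric-difference bookkeeping and the easy direction ($K\cap I=\es\Rightarrow\sigma$ fixes $K$ pointwise) that the paper leaves implicit.
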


\begin{proof}
  From the definition of the semidirect product,  $\sigma\tau_K\sigma\m=  b\un_{\sigma(K)}$.
Hence $[\sigma,\tau_K]=1$ if and only if $\sigma(K)=K$.
If $K\cap I\neq \es$, then $\sigma(K)$ contains a point outside  $\{0\} \times \cald_0$, so $\sigma(K)\neq K$.
\end{proof}

\begin{fact}\label{fact2}
  For any $\tau,\tau'\in \calf_J$ and any $h\in H_J$, $[h\tau,\tau']=[h,\tau']$.
\end{fact}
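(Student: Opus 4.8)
The plan is to prove this by a direct expansion of the commutator, exploiting the single structural fact that $\calf_J$ is an abelian group, so that $\tau$ and $\tau'$ commute with each other.

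First I would recall that since $\calf_J$ is abelian and both $\tau,\tau'\in\calf_J$, we have $\tau\tau'\tau\m=\tau'$. The whole point is that the $\tau$ sitting between $h$ and $\tau'$ will be transparent when it is used to conjugate $\tau'$, because $\tau'$ is insensitive to conjugation by an element of its own abelian group.

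Concretely, writing the commutator as $[x,y]=xyx\m y\m$, I would expand
\[
[h\tau,\tau']=(h\tau)\,\tau'\,(h\tau)\m\,\tau'^{-1}
= h\,(\tau\tau'\tau\m)\,h\m\,\tau'^{-1}.
\]
Substituting $\tau\tau'\tau\m=\tau'$ from the abelianness of $\calf_J$ gives $h\tau'h\m\tau'^{-1}=[h,\tau']$, which is exactly the claim. The computation is independent of what $h$ is and of any detailed description of the semidirect product action of $G$ on $\calf_J$; the element $h$ merely plays the role of a conjugator and is never decomposed.

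There is essentially no obstacle here: the only ingredient is that $\calf_J$ is abelian, which is immediate since $\calf_J$ is by construction an additive subgroup of the group of $A$-valued functions on $\cald$ (with $A$ abelian). I would simply take care to state the commutator convention explicitly so that the one-line cancellation is unambiguous, and to note that the identity holds for every $h\in H_J$ precisely because conjugation by $h$ commutes with the cancellation of the $\tau$-factor.
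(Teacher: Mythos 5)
Your proof is correct and is exactly the argument the paper intends: its one-line proof ("an immediate consequence of the commutation of $\tau$ with $\tau'$") is precisely your expansion $[h\tau,\tau']=h(\tau\tau'\tau\m)h\m\tau'^{-1}=h\tau'h\m\tau'^{-1}$, using only that $\calf_J$ is abelian. Your remark about fixing the commutator convention is well taken, since with the opposite convention $[x,y]=x\m y\m xy$ one would additionally need normality of $\calf_J$ in $H_J$ to move $\tau$ past $h\m\tau'^{-1}h$; with the convention you chose, abelianness alone suffices.
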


\begin{proof}
This is is an immediate consequence of the  commutation of $\tau$ with $\tau'$.
\end{proof}

\begin{proof}[Proof of Lemma \ref{lem_main}]
 Let $\phi:H_{J_1}\ra H_{J_2}$ be an isomorphism.
We first note that $\calf_{J}\normal H_J$ is precisely the set of elements $g\in H_J$ such that $g^2=1$. 
Moreover, the set of elements of finite order in $H_J$ is the subgroup
$N=\calf_J\cals$    
(but there are exotic      elements of order $3$).

Thus $\phi(\tau_{J_1})\in\calf_{J_2}$ can be viewed as a function $\tau:\cald\ra \bbZ/2\bbZ$,
and $\phi(\sigma)$ as an element $S'\tau'$ for some $S'\in \cals$, and
$\tau'\in\calf_J$. 
Now $\phi(R)$ generates $G_{J_2}$ modulo the torsion subgroup, so
$\calf_{J_2}$, so $\phi(R)=R^{\eps}S''\tau''$
for some function $\tau''\in \calf_{J_2}$, $S''\in\cals$, and $\eps=\pm 1$.

In what follows, we use the notation $g^h$ for $hgh\m$. 
Fix $n\in\bbZ$ and consider the commutator $C=[\sigma,\tau_{J_1}^{R^n}]=[\sigma,\tau_{R^n(J_1)}]$. 
By Fact \ref{fact1}, $C$ is trivial if 
and only if $R^n(J_1)\cap I=\es$.
On the other hand, by Fact \ref{fact2}, $$\phi(C)=[S'\tau',\tau^{(R^{\eps}S''\tau'')^n}]=
[S',\tau^{(R^{\eps}S''\tau'')^n}].$$
There exists $\tau'''\in\calf_{J_2}$ such that $(R^{\eps}S''\tau'')^n=(R^{\eps}S'')^n\tau'''$,
and since $\calf_{J_2}$ is abelian, $\tau^{(R^{\eps}S''\tau'')^n}=\tau^{(R^{\eps}S'')^n}$.
Hence, $\phi(C)=1$ if and only if
$[S'^{(R^{\eps}S'')^{-n}},\tau]=1$.
A similar calculation in $G$ shows that $S'^{(R^{\eps}S'')^{-n}}=S'^{R^{-\eps n}S'''}=S'^{R^{-\eps n}}$,
so $\phi(C)=1$ if and only if $[S',\tau^{R^{\eps n}}]=1$.

Given $x\in \bbR/\bbZ$,   
we call the \emph{fiber} of $x$ the 3 point set $F_x=\{(a,x)|a\in \bbZ/3\bbZ\}\subset \cald$.
Let $K\subset \bbR/\bbZ$ be the set of points $x$ such that $\tau$ is non constant on the fiber of $x$.
Since $\tau\in \calf_{J_2}$, $K$ is a union of intervals with endpoints in $\cale_{J_2}$.
Write the transformation $S'$ as $(a,x)\mapsto (a+f(x),x)$ for some function $f:\bbR/\bbZ\ra \bbZ/3\bbZ$,
and let $K'$ be the support of $f$. It is a union of intervals with endpoints in $\cale_{I}$.

We claim that $[S',\tau^{R^{\eps n}}]=1$ if and only if $R^{\eps n}(K)\cap K'=\es$.
Note that $S'$  
preserves each fiber.
 We view $\tau$ as an element in the additive group $\calf$ of functions on $\cald$ with values in $\bbZ/2\bbZ$,
and $S',R$ as transformations of $\cald$, so that $[S',\tau^{R^{\eps n}}]=1$
if and only if $\tau\circ R^{\eps n} \circ S'=\tau\circ R^{\eps n}$.
So fix $x\in\bbR/\bbZ$, and   
$F_x$ its fiber.
If $x\notin K'$, then $S'$ acts as the identity on $F_x$, so 
 $\tau\circ R^{\eps n} \circ S'=\tau\circ R^{\eps n}$ in retriction to $F_x$.
If $x\notin R^{\eps n}(K)$, then $\tau^{R^{\eps n}}$ is a constant function on $F_x$, and the same conclusion holds.
If on the contrary $x\in R^{\eps n}(K)\cap K$, 
then $\tau^{R^{\eps n}}$ is not constant on $F_x$   
and since $S'$ acts transitively on $F_x$,
$\tau^{R^{\eps n}}\circ S'$ does not coincide with $\tau^{R^{\eps n}}$ on $F_x$.
This proves the claim and concludes the proof. 
\end{proof}

We can finally prove Theorem \ref{thm_uncountable}.
\begin{proof}[Proof of Theorem \ref{thm_uncountable}] 
Let $J_x=[0,x[$ for $x<\frac12$.
Proposition \ref{prop_modQ} shows that given $x$, there are at most countably many $y<\frac12$ such that
$H_{J_x}\simeq H_{J_y}$. This gives uncountably many isomorphism classes of groups.

Since $G$ is metabelian, $H_{J_x}$ is solvable of derived length at most $3$. 
Since there are only countably many isomorphism classes of finitely generated metabelian groups,
$H_{J_x}$ has derived length exactly $3$ for uncountably many values of $x$.
\end{proof}

\section*{Acknowledgement}
We would like to thank the anonymous referee for suggestions that improved the exposition.
The first and third authors acknowledge support from ANR-11-BS01-013
while this work was started, and are supported by the Institut Universitaire de
France.  The second named author is supported in part by Grant-in-Aid for Scientific Research (No. 15H05739). 
The author thanks the Centre Henri Lebesgue ANR-11-LABX-0020-01 for creating an attractive mathematical environment.
This material is based upon work supported by the National Science
Fondation under grant No. DMS-1440140 while the authors were in
residence at the Mathematical Science Research Institute in Berkeley
California, during the Fall 2016 semester.

{\footnotesize

}
\vskip .3cm
{\sc Fran\c{c}ois Dahmani}, Universit\'e Grenoble Alpes, Institut
Fourier, F-38000 Grenoble, France. {\tt francois.dahmani@univ-grenoble-alpes.fr}

\vskip .2cm

{\sc Koji Fujiwara}, Department of Mathematics, Kyoto University,
Kyoto, 606-8502 Japan  {\tt kfujiwara@math.kyoto-u.ac.jp }

\vskip .2cm

{\sc Vincent Guirardel}, Univ Rennes, CNRS, IRMAR - UMR 6625, F-35000 Rennes, France.
{\tt vincent.guirardel@univ-rennes1.fr}

\end{document}